      \def\@setcopyright{}
      \def\serieslogo@{}
\newcommand{\Complex}{\mathbb C}
\newcommand{\Real}{\mathbb R}
\newcommand{\N}{\mathbb N}
\newcommand{\ddbar}{\overline\partial}
\newcommand{\pr}{\partial}
\newcommand{\ol}{\overline}
\newcommand{\Td}{\widetilde}
\newcommand{\norm}[1]{\left\Vert#1\right\Vert}
\newcommand{\abs}[1]{\left\vert#1\right\vert}
\newcommand{\set}[1]{\left\{#1\right\}}
\newcommand{\To}{\rightarrow}
\newcommand{\R}{\mathbb{R}}
\newcommand{\C}{\mathbb{C}}
\theoremstyle{plain}
\newtheorem{theorem}{Theorem}[section]
\newtheorem{lemma}[theorem]{Lemma}
\newtheorem{corollary}[theorem]{Corollary}
\newtheorem{definition}[theorem]{Definition}
\newtheorem{remark}[theorem]{Remark}
\numberwithin{equation}{section}
\begin{document}
\title[{An explicit formula for Szeg\H{o} kernels on the Heisenberg group}]
{An explicit formula for Szeg\H{o} kernels on the Heisenberg group}
\author[Hendrik Herrmann]{Hendrik Herrmann}
\address{Mathematical Institute, University of Cologne, Weyertal 86-90, 50931 Cologne, Germany}
\thanks{Hendrik Herrmann was partially supported by the CRC TRR 191: ``Symplectic Structures in Geometry, Algebra and Dynamics''. He would like to thank  the Mathematical Institute, Academia Sinica, and the School of Mathematics and Statistics, Wuhan University, for hospitality, a comfortable accommodation and financial support during his visits in January and March - April, respectively.}
\email{heherrma@math.uni-koeln.de or post@hendrik-herrmann.de}
\author[Chin-Yu Hsiao]{Chin-Yu Hsiao}
\address{Institute of Mathematics, Academia Sinica and National Center for Theoretical Sciences, Astronomy-Mathematics Building, No. 1, Sec. 4, Roosevelt Road, Taipei 10617, Taiwan}
\thanks{Chin-Yu Hsiao was partially supported by Taiwan Ministry of Science of Technology project 104-2628-M-001-003-MY2 and the Golden-Jade fellowship of Kenda Foundation}
\email{chsiao@math.sinica.edu.tw or chinyu.hsiao@gmail.com}
\author[Xiaoshan Li]{Xiaoshan Li}
\address{School of Mathematics
and Statistics, Wuhan University, Hubei 430072, China}
\thanks{Xiaoshan Li was supported by  National Natural Science Foundation of China (Grant No. 11501422).}
\email{xiaoshanli@whu.edu.cn}
\dedicatory{In memory of Professor Qikeng Lu}

\begin{abstract}
In this paper, we give an explicit formula for the Szeg\H{o} kernel for $(0,q)$ forms on the Heisenberg group $H_{n+1}$.
\end{abstract}

\maketitle \tableofcontents

\section{Introduction}

Let $(X, T^{1,0}X)$ be a CR manifold of dimension $2n+1$, $n\geq1$, and let $\Box^{(q)}_b$ be the Kohn Lalpacian acting on $(0,q)$ forms.
The orthogonal projection $S^{(q)}:L^2_{(0,q)}(X)\To {\rm Ker\,}\Box^{(q)}_b$ onto ${\rm Ker\,}\Box^{(q)}_b$
is called the Szeg\H{o} projection, while its distribution kernel $S^{(q)}(x,y)$ is called the Szeg\H{o} kernel.
The study of the Szeg\H{o} projection and kernel is a classical and important subject in several complex variables and CR geometry.
When $X$ is compact, strongly pseudoconvex and $\Box^{(0)}_b$ has $L^2$ closed range,
Boutet de Monvel-Sj\"ostrand~\cite{BM76} showed that $S^{(0)}(x,y)$
is a complex Fourier integral operator with complex phase. In particular,
$S^{(0)}(x,y)$ is smooth outside the diagonal of $X\times X$
and there is a precise description of the singularity on the diagonal $x=y$,
where $S^{(0)}(x,x)$ has a certain asymptotic expansion.
The second-named author ~\cite{Hsiao08} showed that if $X$ is compact, the Levi form is non-degenerate and
$\Box^{(q)}_b$ has
$L^2$ closed range for some $q\in\set{0,1,\ldots,n-1}$, then $S^{(q)}(x,y)$ is a complex Fourier integral operator.

When $X$ is non-compact or $\Box^{(q)}_b$ has no $L^2$ closed range, it is very difficult to study the Szeg\H{o} kernel. In this work, we give an explicit formula for the Szeg\H{o} kernel for $(0,q)$ forms on the Heisenberg group $H_{n+1}=\mathbb C^n\times \mathbb R$. Our results tell us that in the Heisenberg group case, the Szeg\H{o} kernel for $(0,q)$ forms  is also a complex Fourier integral operator. Note that in the Heisenberg group case, $\Box^{(q)}_b$ may has no $L^2$ closed range.

Only few examples of CR manifolds with explicit Szeg\H{o} kernels are known. To give a closed formula of the Szeg\H{o} kernel is not only a problem of its own interest but also significant for the general theory. For example, when $X$ is asymptotically flat, the explicit Szeg\H{o} kernel on the Heisenberg group was used in positive mass theorem in CR geometry~\cite{CMY17}, ~\cite{HY13},~\cite{HY15}.

We now formulate our main results. We refer to Section~\ref{s:prelim} for some notations and terminology used here. Let $H_{n+1}=\mathbb C^{n}\times\mathbb R$ be the Heisenberg group. We use $x=(z, x_{2n+1})=(x_1,\ldots,x_{2n+1})$ to denote the  coordinates on $H_{n+1}$, where $z=(z_1,\ldots,z_n)$, $z_j=x_{2j-1}+ix_{2j}$, $j=1,\ldots,n$. We denote by $T^{1, 0}H_{n+1}$ the CR structure on $H_{n+1}$ which is given by $$T^{1, 0}H_{n+1}:={\rm span}_{\mathbb C}\{Z_j: Z_j=\frac{\partial}{\partial z_j}-i\lambda_j\overline z_j\frac{\partial}{\partial x_{2n+1}}, j=1, \cdots, n\}$$
where $\lambda_j\in\mathbb R, \forall j$, are given real numbers.  We denote by $T^{0, 1}H_{n+1}$ the complex conjugate of $T^{1, 0}H_{n+1}$. Set $T=-\frac{\partial}{\partial x_{2n+1}}$.
Fix a Hermitian metric on $\Complex TH_{n+1}$ denoted by $\langle\cdot|\cdot\rangle$ such that
\begin{equation*}
\begin{split}
&T\bot T^{1, 0}H_{n+1}\bot T^{0, 1}H_{n+1},\\
&\langle T|Z_j\rangle=0, \langle Z_j|Z_k\rangle =\langle\ol Z_j|\ol Z_k\rangle=\delta_{jk}, \forall j, k=1, \cdots, n.
\end{split}
\end{equation*}
Take $d\mu_{H_{n+1}}:=2^{n}dx_1\wedge\cdots\wedge dx_{2n+1}$ be the volume form on $H_{n+1}$.

Denote by $T^{\ast 1,0}H_{n+1}$ and $T^{\ast0,1}H_n$ the dual bundles of
$T^{1,0}H_{n+1}$ and $T^{0,1}H_{n+1}$, respectively. Define the vector bundle of $(0,q)$-forms by
$\Lambda^qT^{\ast0,1}H_{n+1}$. Put
\[\Lambda^{\bullet}T^{\ast0,1}H_{n+1}:=\oplus_{q=0}^n\Lambda^qT^{\ast0,1}H_{n+1}.\]
The Hermitian metric $\langle\cdot|\cdot\rangle$ on
$\mathbb CTH_{n+1}$ induces by duality a Hermitian metric on $\mathbb CT^\ast H_{n+1}$ and also on $\Lambda^{\bullet}T^{\ast0,1}H_{n+1}$. We shall also denote all these induced
metrics by $\langle\cdot|\cdot\rangle$. We can check that the dual frame of $\{Z_j, \overline Z_j, -T;\, j=1,\ldots,n\}_{j=1}^n$ is $\{dz_j, d\overline z_j, \omega_0;\, j=1,2,\ldots,n\}$,  where $dz_j=dx_{2j-1}+idx_{2j}$, $j=1,\ldots,n$, and
\begin{equation}\label{e-gue170522}
\omega_0(x)=dx_{2n+1}+\sum\limits_{j=1}^ni(\lambda_j\overline z_jdz_j-\lambda_jz_jd\overline z_j).
\end{equation}
Thus, one has
\[\Lambda^qT^{\ast 0,1}H_{n+1}={\rm span}_{\mathbb C}\{d\ol z_{j_1}\wedge\cdots\wedge d\ol z_{j_q};\, 1\leq j_1<\cdots<j_q\leq n\}.\]

Let $D\subset H_{n+1}$ be an open subset. Let $\Omega^{0,q}(D)$
denote the space of smooth sections of $\Lambda^qT^{\ast0, 1}H_{n+1}$ over $D$. Let $\Omega^{0,q}_0(D)$ be the subspace of $\Omega^{0,q}(D)$ whose elements have compact support in $D$. Let $(\,\cdot\,|\,\cdot\,)$ be the $L^2$ inner product on $\Omega^{0,q}_0(H_{n+1})$ induced by $\langle\,\cdot\,|\,\cdot\,\rangle$ and the volume form $d\mu_{H_{n+1}}$ and let $\norm{\cdot}$ denote the corresponding norm. Then for all $u, v\in\Omega^{0,q}_0(H_{n+1})$
\begin{equation}
(u|v)=\int_{H_{n+1}}\langle u| v\rangle d\mu_{H_{n+1}}.
\end{equation}
Let $L^2_{(0,q)}(H_{n+1})$ be the completion of $\Omega^{0,q}_0(H_{n+1})$ with respect to $(\cdot|\cdot)$.  We write $L^2(H_{n+1}):=L^2_{(0,0)}(H_{n+1})$.
We extend $(\,\cdot\,|\,\cdot\,)$ to $L^2_{(0,q)}(H_{n+1})$
in the standard way. For $f\in L^2_{(0,q)}(H_{n+1})$, we denote $\norm{f}^2:=(\,f\,|\,f\,)$. Let
\begin{equation} \label{e-suIV}
\ddbar_b:\Omega^{0,q}(H_{n+1})\To\Omega^{0,q+1}(H_{n+1})
\end{equation}
be the tangential Cauchy-Riemann operator.
We extend
$\ddbar_{b}$ to $L^2_{(0,r)}(H_{n+1})$, $r=0,1,\ldots,n$, by
\begin{equation}\label{e-suVII}
\ddbar_{b}:{\rm Dom\,}\ddbar_{b}\subset L^2_{(0,r)}(H_{n+1})\To L^2_{(0,r+1)}(H_{n+1})\,,
\end{equation}
where ${\rm Dom\,}\ddbar_{b}:=\{u\in L^2_{(0,r)}(H_{n+1});\, \ddbar_{b}u\in L^2_{(0,r+1)}(X)\}$ and, for any $u\in L^2_{(0,r)}(H_{n+1})$, $\ddbar_{b} u$ is defined in the sense of distributions.
We also write
\begin{equation}\label{e-suVIII}
\ol{\pr}^{*}_{b}:{\rm Dom\,}\ol{\pr}^{*}_{b}\subset L^2_{(0,r+1)}(H_{n+1})\To L^2_{(0,r)}(H_{n+1})
\end{equation}
to denote the Hilbert space adjoint of $\ddbar_{b}$ in the $L^2$ space with respect to $(\,\cdot\,|\,\cdot\, )$.
Let $\Box^{(q)}_{b}$ denote the (Gaffney extension) of the Kohn Laplacian given by
\begin{equation}\label{e-suIX}
\begin{split}
{\rm Dom\,}&\Box^{(q)}_{b}\\
&=\Big\{s\in L^2_{(0,q)}(H_{n+1});\,
s\in{\rm Dom\,}\ddbar_{b}\cap{\rm Dom\,}\ol{\pr}^{*}_{b},\,
\ddbar_{b}s\in{\rm Dom\,}\ol{\pr}^{*}_{b},\ \ol{\pr}^{*}_{b}s\in{\rm Dom\,}\ddbar_{b}\Big\}\,,\\
\Box^{(q)}_{b}s&=\ddbar_{b}\ol{\pr}^{*}_{b}s+\ol{\pr}^{*}_{b}\ddbar_{b}s
\:\:\text{for $s\in {\rm Dom\,}\Box^{(q)}_{b}$}\,.
 \end{split}
\end{equation}
By a result of Gaffney, for every $q=0,1,\ldots,n$, $\Box^{(q)}_{b}$ is a positive self-adjoint operator
(see \cite[Proposition\,3.1.2]{MM}). That is, $\Box^{(q)}_{b}$ is self-adjoint and
the spectrum of $\Box^{(q)}_{b}$ is contained in $\ol\Real_+$, $q=0,1,\ldots,n$. Let
\begin{equation}\label{e-suXI-I}
S^{(q)}:L^2_{(0,q)}(H_{n+1})\To{\rm Ker\,}\Box^{(q)}_b
\end{equation}
be the orthogonal projection with respect to the $L^2$ inner product $(\,\cdot\,|\,\cdot\,)$ (Szeg\H{o} projection) and let
\begin{equation}\label{e-suXI-II}
S^{(q)}(x,y)\in D'(H_{n+1}\times H_{n+1},\Lambda^qT^{\ast 0,1}H_{n+1}\boxtimes(\Lambda^qT^{\ast 0,1}H_{n+1})^*)
\end{equation}
denote the distribution kernel of $S^{(q)}$ (Szeg\H{o} kernel). Put $\mathcal H^{q}_b(H_{n+1}):={\rm Ker\,}\Box^{(q)}_b$. Our first result is the following

\begin{theorem}\label{thm:1.1}
If $\lambda_j=0$ for some $j$, then
\[\mathcal H^{q}_b(H_{n+1})=\set{0}.\]

Suppose that  all $\lambda_j$ are non-zero and let $n_{-}$ be the number of negative $\lambda_js$ and $n_{+}$ be the number of positive $\lambda_{j}s$. If $q\notin\set{n_-,n_+}$, then
\[\mathcal H^{q}_b(H_{n+1})=\set{0}.\]
\end{theorem}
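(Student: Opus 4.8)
The plan is to reduce the computation of $\Ker\Box^{(q)}_b$ to an explicit ordinary differential / algebraic problem by Fourier-transforming in the central variable $x_{2n+1}$. Concretely, for $u\in\Omega^{0,q}_0(H_{n+1})$ write $u=\sum_{|J|=q}u_J\,d\ol z_J$ and take the partial Fourier transform $\hat u_J(z,\eta)=\int e^{-ix_{2n+1}\eta}u_J(z,x_{2n+1})\,dx_{2n+1}$. Under this transform the CR vector fields $Z_j=\frac{\pr}{\pr z_j}-i\lambda_j\ol z_j\frac{\pr}{\pr x_{2n+1}}$ and their conjugates become, for each fixed $\eta$, the creation/annihilation-type operators $\frac{\pr}{\pr z_j}+\lambda_j\eta\ol z_j$ and $\frac{\pr}{\pr\ol z_j}-\lambda_j\eta z_j$ acting on functions of $z\in\Complex^n$; equivalently, $\ddbar_b$ conjugates (fibrewise in $\eta$) to a weighted $\ddbar$-operator $\ddbar_{\varphi_\eta}$ with the quadratic weight $\varphi_\eta(z)=\sum_j\lambda_j\eta|z_j|^2$ acting on the Bargmann-type space $L^2(\Complex^n,e^{-2\varphi_\eta})$. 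Then $\Box^{(q)}_b$ transforms into the associated Witten/Kodaira Laplacian $\Box^{(q)}_{\varphi_\eta}=\ddbar_{\varphi_\eta}\ddbar_{\varphi_\eta}^*+\ddbar_{\varphi_\eta}^*\ddbar_{\varphi_\eta}$, and $u\in\Ker\Box^{(q)}_b$ if and only if $\hat u(\cdot,\eta)\in\Ker\Box^{(q)}_{\varphi_\eta}$ for a.e.\ $\eta$ with the fibre norms integrable.

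The second step is the Hodge/Bochner–Kodaira computation for these model Laplacians. The key point is that $\varphi_\eta$ has constant complex Hessian with signature depending only on the signs of the $\lambda_j\eta$: for fixed $\eta\neq0$, exactly those $j$ with $\lambda_j\eta>0$ contribute a positive eigenvalue and those with $\lambda_j\eta<0$ a negative one. A standard Bochner–Kodaira–Nakano identity for $\Box^{(q)}_{\varphi_\eta}$ on $\Complex^n$ then gives, for a $(0,q)$-form $f$ in the kernel, the pointwise identity relating $\|\ddbar_{\varphi_\eta}f\|^2+\|\ddbar_{\varphi_\eta}^*f\|^2$ to $\sum$ of the curvature eigenvalues contracted against $f$ plus a positive ``extra'' term; harmonicity forces $f$ to be concentrated (as a form) on the span of exactly the negative directions, and simultaneously the associated scalar components must be ground states of the relevant harmonic oscillators. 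This is exactly the finite-dimensional Bargmann model: a nonzero harmonic $(0,q)$-form in $L^2(e^{-2\varphi_\eta})$ exists precisely when $q$ equals the number of negative eigenvalues of the Hessian, i.e.\ $q=\#\{j:\lambda_j\eta<0\}$.

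The third step is to assemble the constraint over $\eta$. If some $\lambda_{j_0}=0$, then for every $\eta$ the $j_0$-direction of $\varphi_\eta$ is flat, so the oscillator in that variable has no $L^2$ ground state and $\Ker\Box^{(q)}_{\varphi_\eta}=\set{0}$ for all $\eta$; integrating, $\mathcal H^q_b(H_{n+1})=\set{0}$. If all $\lambda_j\neq0$, then for $\eta>0$ the number of negative eigenvalues of $\varphi_\eta$ is $n_-$, while for $\eta<0$ it is $n_+$ (and $\eta=0$ is a null set). Hence $\Ker\Box^{(q)}_{\varphi_\eta}\neq\set{0}$ only for $q=n_-$ (on $\eta>0$) or $q=n_+$ (on $\eta<0$); if $q\notin\set{n_-,n_+}$ the fibre kernel is trivial for a.e.\ $\eta$, so $\mathcal H^q_b(H_{n+1})=\set{0}$.

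I expect the main obstacle to be making the Fourier-transform reduction rigorous at the level of the \emph{Gaffney extension}: one must check that $u\in\Dom\Box^{(q)}_b$ with $\Box^{(q)}_b u=0$ really does pass through the (unitary up to the weight) fibre decomposition and that membership in $\Ker\Box^{(q)}_b$ is equivalent to fibrewise harmonicity in the weighted $L^2$ spaces — in particular handling the $\Dom\ddbar_b\cap\Dom\ddbar_b^*$ conditions and the a.e.-$\eta$ quantifiers, and ruling out that the nonzero fibre harmonics (which are genuine $L^2(\Complex^n,e^{-2\varphi_\eta})$ elements for each fixed $\eta$) could fail to glue to an $L^2(H_{n+1})$ section when $q\in\set{n_-,n_+}$ is momentarily allowed. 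The Bochner–Kodaira computation itself is routine for quadratic weights; the care is all in the functional-analytic bookkeeping of the direct-integral decomposition.
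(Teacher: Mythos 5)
Your overall strategy coincides with the paper's: take the partial Fourier transform in $x_{2n+1}$, reduce to a family of weighted problems on $\C^n$ with quadratic weight $\eta\sum_j\lambda_j|z_j|^2$, show the fibre kernel vanishes for a.e.\ $\eta$ unless $q$ matches the signature, and conclude by Plancherel. The fibrewise step is also the same in substance: what you package as ``Bochner--Kodaira--Nakano plus the Bargmann model'' the paper does by hand, using the fact that on $H_{n+1}$ the Kohn Laplacian is diagonal, $\Box^{(q)}_bu=-\sum_{J}\bigl(\sum_{k\notin J}Z_k\ol Z_k+\sum_{k\in J}\ol Z_kZ_k\bigr)u_J\,d\ol z^J$, so that harmonicity is equivalent to the first-order system $Z_ju_J=0$ for $j\in J$, $\ol Z_ju_J=0$ for $j\notin J$; after the partial Fourier transform each component becomes $e^{-\eta\Td\lambda|z|^2}$ times an (anti)holomorphic function, and the divergence of the Gaussian monomial integrals $\int_{\C^n}|z^\alpha|^2e^{-2\eta\Td\lambda|z|^2}d\mu(z)$ (Lemma~\ref{lem:monomialintegrals}) kills every Taylor coefficient. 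This is exactly your ``no $L^2$ ground state'' argument made explicit, and it handles the degenerate case $\lambda_{j_0}=0$ in the same stroke.

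The genuine gap is the one you flag and then leave open: the passage from $u\in{\rm Ker\,}\Box^{(q)}_b$ (Gaffney extension) to the fibrewise equations for a.e.\ $\eta$. This is where essentially all of the paper's work lies, and it is not a formality. The paper's route (Lemmas~\ref{lem:equivkernel} and~\ref{fo1}) is: (i) cut off and mollify via Friedrichs' lemma to produce $u_j\in\Omega^{0,q}_0(H_{n+1})$ with $u_j\To u$, $\ddbar_bu_j\To0$, $\ol\pr^*_bu_j\To0$ in $L^2$, so that $(\Box^{(q)}_bu_j|u_j)\To0$ and the diagonal formula yields $\norm{Z_ku_{jJ}},\norm{\ol Z_ku_{jJ}}\To0$; (ii) transfer this to the partial Fourier transform by testing against functions of the form $\varphi(z)\int e^{ix_{2n+1}\eta}g_R(\eta)d\eta$; and (iii) control the ``for a.e.\ $\eta$'' quantifier, which a priori depends on the test function $\varphi$, by running the argument over a countable dense subset of $W^1(\C^n)$ and extracting a single null set. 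Your proposal instead asserts a full direct-integral decomposition of $\Box^{(q)}_b$ into the fibre Laplacians $\Box^{(q)}_{\varphi_\eta}$, which is stronger than what is needed and harder to justify directly from the Gaffney domain; only the implication ``harmonic $\Rightarrow$ fibrewise first-order equations a.e.'' is required here. Without steps (i)--(iii) or a substitute, the proof is incomplete. (Your assignment of $q=n_-$ to $\eta>0$ rather than $\eta<0$ is opposite to what the paper's conventions give, but this is a harmless sign convention since only the union $\set{n_-,n_+}$ enters the statement.)
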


In view of Theorem~\ref{thm:1.1}, we only need to consider the non-degenerate case, that is, all $\lambda_j$ are non-zero.
We now state our explicit formula for $S^{(q)}$ in the non-degenerate case. We introduce some notations and definitions. Consider the two functions
\begin{equation}\label{phase1}
\begin{split}
&\varphi_-(x, y)\\
&=-x_{2n+1}+y_{2n+1}+i\sum_{j=1}^n|\lambda_j||z_j-w_j|^2+i\sum_{j=1}^n
\lambda_j(\overline z_jw_j-z_j\overline w_j)\in C^\infty(H_{n+1}\times H_{n+1}),\\
&\varphi_+(x, y)\\
&=x_{2n+1}-y_{2n+1}+i\sum_{j=1}^n|\lambda_j||z_j-w_j|^2+i\sum_{j=1}^n
\lambda_j(z_j\ol w_j-\ol z_jw_j)\in C^\infty(H_{n+1}\times H_{n+1}).
\end{split}
\end{equation}
Let $\varphi\in\{\varphi_-,\varphi_+\}$ be one of the functions defined above. Fix $q=0,1,\ldots,n$. For $u\in\Omega^{0,q}_0(H_{n+1})$, by using integration by parts with respect to $y_{2n+1}$ several times, we can show that
\[\lim_{\varepsilon\To0^+}\int_{H_{n+1}}\frac{1}{\bigr(-i(\varphi(x,y)+i\varepsilon)\bigr)^{n+1}}u(y)d\mu_{H_{n+1}}(y)\] exists for every $x\in H_{n+1}$,
\begin{equation}\label{e-gue170525}
\lim_{\varepsilon\To0^+}\int_{H_{n+1}}\frac{1}{\bigr(-i(\varphi(x,y)+i\varepsilon)\bigr)^{n+1}}u(y)d\mu_{H_{n+1}}(y)\in\Omega^{0,q}(H_{n+1})
\end{equation}
and the operator
\begin{equation}\label{e-gue170525I}
\begin{split}
I^{(q)}_{\varphi}: \Omega^{0,q}_0(H_{n+1})&\To\Omega^{0,q}(H_{n+1}),\\
u&\mapsto \lim_{\varepsilon\To0^+}\int_{H_{n+1}}\frac{1}{\bigr(-i(\varphi(x,y)+i\varepsilon)\bigr)^{n+1}}u(y)d\mu_{H_{n+1}}(y)\in\Omega^{0,q}(H_{n+1})
\end{split}
\end{equation}
is continuous. Moreover, we will show in Theorem~\ref{tema2} and Corollary~\ref{c-gue170528} that there is a constant $C>0$ such that
\[\norm{I^{(q)}_{\varphi}u}\leq C\norm{u},\ \ \forall u\in\Omega^{0,q}_0(H_{n+1}).\]
Thus, we can extend $I^{(q)}_{\varphi}$ to $L^2_{(0,q)}(H_{n+1})$ in the standard way and we have that
\begin{equation}\label{e-gue170525II}
I^{(q)}_{\varphi}: L^2_{(0,q)}(H_{n+1})\To L^2_{(0,q)}(H_{n+1})
\end{equation}
is continuous.
We first state our main result for positive case.

\begin{theorem}\label{main theorem 2}
Assume that $\lambda_j>0$ for every $j=1,2,\ldots,n$. With the notations used above, we have
\begin{equation}\label{e-gue170525rj}
S^{(0)}=\frac{\abs{\lambda_1}\cdots\abs{\lambda_n}}{2\pi^{n+1}}n!I^{(0)}_{\varphi_-}\ \ \mbox{on $L^2(H_{n+1})$}.
\end{equation}
\end{theorem}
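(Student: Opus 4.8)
The plan is to show that the bounded operator $P:=\frac{\abs{\lambda_1}\cdots\abs{\lambda_n}}{2\pi^{n+1}}\,n!\,I^{(0)}_{\varphi_-}$ on $L^2(H_{n+1})$ (bounded by \eqref{e-gue170525II} and the estimate of Theorem~\ref{tema2} and Corollary~\ref{c-gue170528}) is the orthogonal projection onto $\mathcal H^{0}_b(H_{n+1})$. Since on functions $\Box^{(0)}_b=\ol{\pr}^{*}_{b}\ddbar_b$, we have $\mathcal H^{0}_b(H_{n+1})=\Ker\ddbar_b\cap L^2(H_{n+1})$, so it is enough to verify: (i) $P^*=P$; (ii) $P\bigl(L^2(H_{n+1})\bigr)\subseteq\mathcal H^{0}_b(H_{n+1})$; (iii) $Pu=u$ for all $u\in\mathcal H^{0}_b(H_{n+1})$. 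Granting (i)--(iii): for $u\perp\mathcal H^{0}_b(H_{n+1})$ and any $v\in L^2(H_{n+1})$ one has $(Pu\,|\,v)=(u\,|\,Pv)=0$ because $Pv\in\mathcal H^{0}_b(H_{n+1})$, so $P$ vanishes on $\mathcal H^{0}_b(H_{n+1})^\perp$; together with (iii) this gives $P=S^{(0)}$, which is \eqref{e-gue170525rj}.

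Properties (i) and (ii) reduce to two pointwise facts about the phase. Put $K_\varepsilon(x,y):=\bigl(-i(\varphi_-(x,y)+i\varepsilon)\bigr)^{-(n+1)}$ and note that $\mathrm{Re}\bigl(-i(\varphi_-(x,y)+i\varepsilon)\bigr)=\varepsilon+\sum_j\lambda_j\abs{z_j-w_j}^2>0$, so $-i(\varphi_-+i\varepsilon)$ stays in the right half-plane and $K_\varepsilon$ is well defined. For (i), a direct computation gives $\overline{\varphi_-(y,x)}=-\varphi_-(x,y)$, hence $\overline{K_\varepsilon(y,x)}=K_\varepsilon(x,y)$; thus each regularised operator is symmetric on $\Omega^{0,0}_0(H_{n+1})$, and passing to the limit $\varepsilon\to0^+$ makes $I^{(0)}_{\varphi_-}$ self-adjoint. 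For (ii), using $\partial_{\bar z_j}\abs{z_j-w_j}^2=z_j-w_j$ one checks $\overline Z_{j,x}\varphi_-(x,y)=0$ for $j=1,\dots,n$, whence $\overline Z_{j,x}K_\varepsilon(x,y)=0$; differentiating under the regularised integral in \eqref{e-gue170525I} (legitimate by the same integration-by-parts estimates that produce $I^{(0)}_{\varphi_-}$) yields $\overline Z_j\bigl(I^{(0)}_{\varphi_-}u\bigr)=0$, so $I^{(0)}_{\varphi_-}u\in\Ker\ddbar_b\cap L^2(H_{n+1})=\mathcal H^{0}_b(H_{n+1})$.

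The substantial step is (iii), which I would prove by taking the partial Fourier transform $\mathcal F$ in the central variable $x_{2n+1}$. Under $\mathcal F$ the operator $\overline Z_j$ becomes $\partial_{\bar z_j}-\lambda_j\eta z_j$; solving $(\partial_{\bar z_j}-\lambda_j\eta z_j)\hat u(\cdot,\eta)=0$ and imposing $\hat u(\cdot,\eta)\in L^2(\C^n)$ shows that every $u\in\mathcal H^{0}_b(H_{n+1})$ has $\hat u$ supported in $\{\eta<0\}$ with $\hat u(z,\eta)=e^{\eta\sum_j\lambda_j\abs{z_j}^2}g(z,\eta)$, where $g(\cdot,\eta)$ is holomorphic and lies in the Bargmann--Fock space of weight $\prod_j e^{-2\abs{\eta}\lambda_j\abs{z_j}^2}$ (this is the description underlying the vanishing assertions of Theorem~\ref{thm:1.1}). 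Next write $\varphi_-(x,y)=-(x_{2n+1}-y_{2n+1})+i\sum_j\lambda_j(\abs{z_j}^2+\abs{w_j}^2-2z_j\ol w_j)$ and insert the elementary identity $(a-it)^{-(n+1)}=\frac1{n!}\int_0^\infty\tau^n e^{-a\tau}e^{it\tau}\,d\tau$, valid for $\mathrm{Re}\,a>0$, with $a=\varepsilon+\sum_j\lambda_j(\abs{z_j}^2+\abs{w_j}^2-2z_j\ol w_j)$ and $t=y_{2n+1}-x_{2n+1}$. The $y_{2n+1}$-integration then turns $u$ into $\hat u(\cdot,-\tau)$, and for each fixed $\tau>0$ the remaining Gaussian integral over $w\in\C^n$ is exactly the reproducing identity for the Bargmann--Fock space of weight $\prod_j e^{-2\tau\lambda_j\abs{w_j}^2}$, whose reproducing kernel is $\prod_j\frac{2\tau\lambda_j}{\pi}e^{2\tau\lambda_j z_j\ol w_j}$; it returns $\prod_j\frac{\pi}{2\tau\lambda_j}g(z,-\tau)$. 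The factors $\tau^n$ and $2^n$ cancel, and after inverting $\mathcal F$ one is left with $\mathcal F\bigl(I^{(0)}_{\varphi_-}u\bigr)=\frac{2\pi^{n+1}}{n!\,\abs{\lambda_1}\cdots\abs{\lambda_n}}\,\hat u$, i.e.\ $Pu=u$. The main obstacle is making this last computation rigorous: one must justify the limit $\varepsilon\to0^+$ and the interchange of the $\tau$-, $w$- and $y_{2n+1}$-integrations --- by dominated convergence, using $\mathrm{Re}\,a\ge\sum_j\lambda_j\abs{z_j-w_j}^2$ together with the a priori $L^2$ bound on $I^{(0)}_{\varphi_-}$ --- and one must control the Bargmann--Fock reproduction uniformly enough in $\tau$ that it may be integrated against $\tau^n\,d\tau$; the remaining manipulation of the constants is then routine.
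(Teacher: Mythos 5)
Your proposal is correct and follows essentially the same route as the paper: self-adjointness from the kernel symmetry $\overline{\varphi_-(y,x)}=-\varphi_-(x,y)$, the inclusion $P\bigl(L^2(H_{n+1})\bigr)\subseteq\mathcal H^{0}_b(H_{n+1})$ from $\overline Z_{j,x}\varphi_-=0$, and the reproducing property via the partial Fourier transform in $x_{2n+1}$, the support condition $\hat u(z,\eta)=0$ for $\eta>0$, and the weighted Bargmann--Fock reproducing identity --- precisely the content of Lemmas~\ref{l-gue170602}, \ref{l-gue170603} and \ref{lemma CR}. The normalization constants in your computation also come out right.
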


We now consider non-positive case. Let $r\in\mathbb N$. For a multi-index
$J=(j_1,\ldots,j_r)\in\{1,\ldots,n\}^r$ we set $l(J)=r$. We say
that $J$ is strictly increasing if $1\leqslant
j_1<j_2<\cdots<j_r\leqslant  n$ and we put $d\ol z^J=d\ol z_{j_1}\wedge\cdots\wedge d\ol z_{j_r}$. Let  $u\in\oplus^n_{q=0}L^2_{(0,q)}(H_{n+1})$. We have the representation
\[u=\sideset{}{'}\sum_{1\leq l(J)\leq n}u_J(z)d\ol z^J+u_0(z),\ \ u_0(z)\in L^2(H_{n+1}),\]
where $\sum^{'}$ means that the summation is performed only
over strictly increasing multi-indices. Suppose that all $\lambda_j$ are non-zero and let $n_-$ be the number of negative $\lambda_js$. Assume that $n_->0$ and
suppose that $\lambda_1<0,\ldots,\lambda_{n_-}<0$.
Put
\begin{equation}\label{e-gue170525r}
\begin{split}
J_{n_-}=(1,\ldots,n_-),\\
J_{n_+}=(n_-+1,\ldots,n)\ \ \mbox{if $n_-<n$}.
\end{split}
\end{equation}
Consider the operator
\begin{equation}\label{e-gue170522a}
\begin{split}
\tau_{-}: \oplus^n_{q=0}L^2_{(0,q)}(H_{n+1})&\To L^2_{(0,n_-)}(H_{n+1}),\\
u=\sideset{}{'}\sum_{1\leq l(J)\leq n}u_Jd\ol z^J+u_0(z)&\mapsto u_{J_{n_-}}d\ol z^{J_{n_-}}.
\end{split}
\end{equation}
If $n_-<n$, set
\begin{equation}\label{e-gue170522aI}
\begin{split}
\tau_{+}: \oplus^n_{q=0}L^2_{(0,q)}(H_{n+1})&\To L^2_{(0,n-n_-)}(H_{n+1}),\\
u=\sideset{}{'}\sum_{1\leq l(J)\leq n}u_Jd\ol z^J+u_0(z)&\mapsto u_{J_{n_+}}d\ol z^{J_{n_+}}.
\end{split}
\end{equation}
If $n_-=n$, set
\begin{equation}\label{e-gue170607}
\begin{split}
\tau_{+}: \oplus^n_{q=0}L^2_{(0,q)}(H_{n+1})&\To L^2(H_{n+1}),\\
u=\sideset{}{'}\sum_{1\leq l(J)\leq n}u_Jd\ol z^J+u_0(z)&\mapsto u_0(z).
\end{split}
\end{equation}
Then \(\tau_-\) and \(\tau_{+}\) are continuous operators.
Our main result for $(0,q)$ forms is the following

\begin{theorem}\label{main theorem 3}
Suppose that all $\lambda_j$ are non-zero and let $n_-$ be the number of negative $\lambda_js$. Assume that $n_->0$.
Let $q\in\set{n_-,n_+}$, where $n_+=n-n_-$. With the notations used above, we have
\[S^{(q)}=\frac{\abs{\lambda_1}\cdots\abs{\lambda_n}}{2\pi^{n+1}}n!I^{(n_-)}_{\varphi_-}\circ\tau_-+\frac{\abs{\lambda_1}\cdots\abs{\lambda_n}}{2\pi^{n+1}}n!I^{(n_+)}_{\varphi_+}\circ\tau_+\ \ \mbox{on $L^2_{(0,q)}(H_{n+1})$}.\]
\end{theorem}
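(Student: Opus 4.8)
The plan is to diagonalise $\Box^{(q)}_b$ by a partial Fourier transform in the central variable $x_{2n+1}$ and to identify each fibre of the resulting direct integral with a twisted Kohn Laplacian on $\mathbb C^n$ whose null space is an explicit mixed Bargmann--Fock space; this is the same mechanism underlying Theorem~\ref{main theorem 2}, and I would present Theorem~\ref{main theorem 3} as its refinement, the only new point being the bookkeeping forced by negative $\lambda_j$'s. Write $\mathcal F$ for the Fourier transform in $x_{2n+1}$, with dual variable $\eta$. Since $\Box^{(q)}_b$ is invariant under translation in $x_{2n+1}$, $\mathcal F$ intertwines it with $\int^{\oplus}_{\mathbb R}\Box^{(q)}_{b,\eta}\,d\eta$, where $\Box^{(q)}_{b,\eta}$ is the Gaffney extension on $L^2$-sections of $\Lambda^qT^{\ast 0,1}H_{n+1}$ over $\mathbb C^n$ (flat metric) obtained from $\ddbar_b$ after replacing $T=-\partial/\partial x_{2n+1}$ by $-i\eta$; correspondingly $\mathcal FS^{(q)}\mathcal F^{-1}=\int^{\oplus}_{\mathbb R}S^{(q)}_\eta\,d\eta$, with $S^{(q)}_\eta$ the orthogonal projection onto $\Ker\Box^{(q)}_{b,\eta}$. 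Under $\mathcal F$ the frame fields become $\widehat{Z_j}=\partial_{z_j}+\lambda_j\eta\overline z_j$ and $\widehat{\overline Z_j}=\partial_{\overline z_j}-\lambda_j\eta z_j$; because the $\lambda_j$ are constants and $Z_k$ commutes with $\overline Z_m$ for $k\neq m$, the off-diagonal terms of $\Box^{(q)}_{b,\eta}$ in the frame $\{d\overline z^J\}$ cancel, so $\Box^{(q)}_{b,\eta}$ acts on the $d\overline z^J$-component as the commuting sum $\sum_{k\notin J}\widehat{\overline Z_k}^{\,\ast}\widehat{\overline Z_k}+\sum_{k\in J}\widehat{\overline Z_k}\,\widehat{\overline Z_k}^{\,\ast}$.

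Next I would analyse each one-variable factor. Each is, up to the positive constant $2|\lambda_j\eta|$, a shifted number operator, so it has nonzero $L^2(\mathbb C)$-kernel for exactly one sign of $\lambda_j\eta$: for $j\notin J$ one needs $\lambda_j\eta<0$, with ground state $h(z_j)e^{-|\lambda_j\eta||z_j|^2}$ ($h$ holomorphic), and for $j\in J$ one needs $\lambda_j\eta>0$, with ground state $h(\overline z_j)e^{-|\lambda_j\eta||z_j|^2}$ ($h$ antiholomorphic). Hence $\Ker\Box^{(q)}_{b,\eta}\neq\set{0}$ on the $d\overline z^J$-component iff $\eta\neq0$ and $J=\set{j:\lambda_j\eta>0}$. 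For $\eta<0$ this forces $J=J_{n_-}$ (so $l(J)=n_-$), and $\Ker\Box^{(n_-)}_{b,\eta}$ is the mixed Fock space $\set{h(\overline z_1,\ldots,\overline z_{n_-},z_{n_-+1},\ldots,z_n)e^{-|\eta|\sum_j|\lambda_j||z_j|^2}}\,d\overline z^{J_{n_-}}$; for $\eta>0$ it forces $J=J_{n_+}$ with the conjugate description. In particular $\Ker\Box^{(q)}_b=\set{0}$ unless $q\in\set{n_-,n_+}$, recovering Theorem~\ref{thm:1.1} in the non-degenerate case. The reproducing kernel of each mixed Fock space is the product of the classical one-dimensional ones; taking the normalisation $d\mu_{H_{n+1}}=2^n\,dx_1\wedge\cdots\wedge dx_{2n+1}$ into account, one obtains for $\eta<0$
\[
S^{(n_-)}_\eta(z,w)=\frac{|\lambda_1|\cdots|\lambda_n|}{\pi^n}(-\eta)^n e^{\eta\Phi_-(z,w)}\,d\overline z^{J_{n_-}}\!(z)\otimes\overline{d\overline z^{J_{n_-}}\!(w)},\quad \Phi_-(z,w)=\sum_j\bigl(|\lambda_j||z_j-w_j|^2+\lambda_j(\overline z_jw_j-z_j\overline w_j)\bigr),
\]
and symmetrically for $\eta>0$ with $\Phi_+(z,w)=\sum_j\bigl(|\lambda_j||z_j-w_j|^2+\lambda_j(z_j\overline w_j-\overline z_jw_j)\bigr)$ and $d\overline z^{J_{n_+}}$.

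Then I would reassemble. Applying $\mathcal F^{-1}$ and substituting $s=-\eta$ (resp. $s=\eta$), the contribution of $\eta<0$ to $S^{(q)}(x,y)$ is
\[
\frac{|\lambda_1|\cdots|\lambda_n|}{2\pi^{n+1}}\int_0^\infty s^n e^{-s(\Phi_-(z,w)+i(x_{2n+1}-y_{2n+1}))}\,ds\;d\overline z^{J_{n_-}}\!(x)\otimes\overline{d\overline z^{J_{n_-}}\!(y)},
\]
and the contribution of $\eta>0$ is the analogous expression with $\Phi_+$, $d\overline z^{J_{n_+}}$ and $+i\mapsto-i$. One checks from \eqref{phase1} that $\Phi_-(z,w)+i(x_{2n+1}-y_{2n+1})=-i\varphi_-(x,y)$ and $\Phi_+(z,w)-i(x_{2n+1}-y_{2n+1})=-i\varphi_+(x,y)$, and that $\operatorname{Re}(-i\varphi_\pm)=\sum_j|\lambda_j||z_j-w_j|^2\geq0$; inserting the $+i\varepsilon$-regularisation that defines $I^{(q)}_\varphi$ in \eqref{e-gue170525I} (equivalently a factor $e^{-s\varepsilon}$) and using $\int_0^\infty s^n e^{-s\zeta}\,ds=n!\,\zeta^{-(n+1)}$ for $\operatorname{Re}\zeta>0$, the two contributions become exactly $\tfrac{|\lambda_1|\cdots|\lambda_n|}{2\pi^{n+1}}n!\,I^{(n_-)}_{\varphi_-}\circ\tau_-$ and $\tfrac{|\lambda_1|\cdots|\lambda_n|}{2\pi^{n+1}}n!\,I^{(n_+)}_{\varphi_+}\circ\tau_+$; here the passage $\varepsilon\to0^+$ is justified by the $L^2$-bounds of Theorem~\ref{tema2} and Corollary~\ref{c-gue170528}, and $\tau_-$, $\tau_+$ appear precisely because they extract the $d\overline z^{J_{n_-}}$- and $d\overline z^{J_{n_+}}$-coefficients. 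When $q=n_-<n_+$ the second operator annihilates $L^2_{(0,q)}(H_{n+1})$ for degree reasons (and the first does if $q=n_+>n_-$), while if $n_-=n_+$ both terms survive; so in every case summing the two contributions yields the stated formula.

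The hard part will be Step~1: one must verify rigorously that the Gaffney extension $\Box^{(q)}_b$ genuinely decomposes under $\mathcal F$ into the \emph{fibrewise} Gaffney extensions $\Box^{(q)}_{b,\eta}$ --- i.e. that $\mathcal F$ carries $\operatorname{Dom}\ddbar_b$ and $\operatorname{Dom}\ddbar_b^{\ast}$ onto the corresponding direct integrals and that the fibres depend measurably on $\eta$ --- and that the orthogonal projection $S^{(q)}$ is then the fibrewise projection $\int^{\oplus}_{\mathbb R}S^{(q)}_\eta\,d\eta$. Once this direct-integral picture is in place, the remaining ingredients (the one-dimensional spectral analysis, pinning down the normalisation constant $\tfrac{|\lambda_1|\cdots|\lambda_n|}{2\pi^{n+1}}n!$, and the elementary Laplace-type $\eta$-integral) are routine.
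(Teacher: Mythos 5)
Your computational skeleton is sound: the fibrewise first-order analysis, the identification of the surviving multi-indices $J_{n_-}$ (for $\eta<0$) and $J_{n_+}$ (for $\eta>0$), the mixed Fock-space reproducing kernels, and the Laplace integral $\int_0^\infty s^ne^{-s\zeta}\,ds=n!\,\zeta^{-(n+1)}$ all match the computations that actually occur in the paper, and your constants check out. However, your route is organized around diagonalizing the unbounded operator $\Box^{(q)}_b$ itself as a direct integral, and the step you yourself flag as ``the hard part'' is a genuine gap rather than a routine verification: you would have to prove that the Gaffney extension is unitarily equivalent to $\int^{\oplus}_{\mathbb R}\Box^{(q)}_{b,\eta}\,d\eta$ with each fibre the correct self-adjoint extension, that the field of kernels $\eta\mapsto{\rm Ker\,}\Box^{(q)}_{b,\eta}$ is measurable, and that the projection onto the kernel decomposes fibrewise. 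Even fibrewise, identifying ${\rm Ker\,}\Box^{(q)}_{b,\eta}$ on the $d\ol z^J$-component with $\bigcap_{k\notin J}{\rm Ker\,}\widehat{\overline Z_k}\cap\bigcap_{k\in J}{\rm Ker\,}\widehat{\overline Z_k}^{\,*}$ requires a Friedrichs-lemma or essential-self-adjointness argument; none of this is supplied in your proposal.

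The paper is structured precisely to avoid this. It never decomposes $\Box^{(q)}_b$. Instead it (i) characterizes $\mathcal H^q_b(H_{n+1})$ by the first-order system \eqref{CR1} via Friedrichs' lemma (Lemma \ref{lem:equivkernel}); (ii) transfers that system to the Fourier side only at the level of the kernel (Lemma \ref{fo1}), deducing that harmonic forms carry only the $d\ol z^{J_{n_\pm}}$ components (Lemma \ref{lemma vanish}); and (iii) verifies three properties of the explicitly defined operator $\Td S^{(q)}$: it is $L^2$-bounded (Theorem \ref{tema2}), its range lies in $\mathcal H^q_b(H_{n+1})$ (by differentiating the phase, Lemma \ref{l-gue170604lc}), it is self-adjoint, and it restricts to the identity on $\mathcal H^q_b(H_{n+1})$ --- the last by reducing to the all-positive case of Theorem \ref{main theorem 2} through the substitution $z_j\mapsto\ol z_j$ for $j\leq n_-$ and $x_{2n+1}\mapsto -x_{2n+1}$. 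A bounded self-adjoint operator whose range lies in a closed subspace on which it acts as the identity is the orthogonal projection onto that subspace; no direct-integral machinery is needed. To complete your argument you must either carry out the direct-integral decomposition of the Gaffney extension in detail, or --- more economically --- take the fibrewise formula you derived as the \emph{definition} of a candidate operator and then run the three-property argument, at which point your proof essentially becomes the paper's.
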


\begin{remark}\label{r-gur170525}
With the notations and assumptions used in Theorem~\ref{main theorem 3}, suppose  $q=n_-$ and $n_-\neq n_+$. Since $\tau_+u=0$ for every $u\in L^2_{(0,q)}(X)$, we get \[S^{(q)}=\frac{\abs{\lambda_1}\cdots\abs{\lambda_n}}{2\pi^{n+1}}n!I^{(n_-)}_{\varphi_-}\circ\tau_-.\]
If $q=n_-=n_+$, the operators $I^{(n_-)}_{\varphi_-}\circ\tau_-$ and $I^{(n_+)}_{\varphi_+}\circ\tau_+$ are non-trivial.
\end{remark}

\begin{remark}\label{r-gue170525I}
It should be notice that the operator $I^{(q)}_\varphi$ in \eqref{e-gue170525I} is a complex Fourier integral operator
\[\int_0^\infty e^{it\varphi(x, y)}\frac{1}{n!}t^ndt\]
with complex phase $\varphi$ and symbol $\frac{1}{n!}t^n$ (see the discussion in the beginning of Section~\ref{s-gue170528}).
\end{remark}

	In Section \ref{sec:relbergszeg} we show  how \(S^{(0)}\) is related to a weighted Bergman kernel on \(\mathbb C^n\) (see Theorem \ref{thm:szegobergman}).

\section{Preliminaries}\label{s:prelim}

We shall use the following notations: $\mathbb N=\set{1,2,\ldots}$, $\mathbb N_0=\mathbb N\cup\set{0}$, $\Real$
is the set of real numbers, $\ol\Real_+:=\set{x\in\Real;\, x\geq0}$. Let $m\in\mathbb N$.
For a multi-index $\alpha=(\alpha_1,\ldots,\alpha_m)\in\mathbb N_0^m$,
we denote by $\abs{\alpha}=\alpha_1+\ldots+\alpha_m$ its norm and by $l(\alpha)=m$ its length.
$\alpha$ is strictly increasing if $\alpha_1<\alpha_2<\ldots<\alpha_m$.

Let $z=(z_1,\ldots,z_n)$, $z_j=x_{2j-1}+ix_{2j}$, $j=1,\ldots,n$, be coordinates of $\Complex^n$. Let $\alpha=(\alpha_1,\ldots,\alpha_n)\in\mathbb N_0^n$ be a multi-index.
We write
\[
\begin{split}
&z^\alpha=z_1^{\alpha_1}\ldots z^{\alpha_n}_n\,,\quad\ol z^\alpha=\ol z_1^{\alpha_1}\ldots\ol z^{\alpha_n}_n\,,\\
&\frac{\pr}{\pr z_j}=
\frac{1}{2}\Big(\frac{\pr}{\pr x_{2j-1}}-i\frac{\pr}{\pr x_{2j}}\Big)\,,\quad
\frac{\pr}{\pr\ol z_j}=\frac{1}{2}\Big(\frac{\pr}{\pr x_{2j-1}}+i\frac{\pr}{\pr x_{2j}}\Big),\ \ j=1,\ldots,n.
\end{split}
\]
For $j, s\in\mathbb Z$, set $\delta_{j,s}=1$ if $j=s$, $\delta_{j,s}=0$ if $j\neq s$.

Let $M$ be a $C^\infty$ paracompact manifold.
We let $TM$ and $T^*M$ denote the tangent bundle of $M$
and the cotangent bundle of $M$, respectively.
The complexified tangent bundle of $M$ and the complexified cotangent bundle of $M$ will be denoted by $\Complex TM$
and $\Complex T^*M$, respectively. Write $\langle\,\cdot\,,\cdot\,\rangle$ to denote the pointwise standard pairing
between $TM$ and $T^*M$.
We extend $\langle\,\cdot\,,\cdot\,\rangle$ bilinearly to $\Complex TM\times\Complex T^*M$.
Let $G$ be a $C^\infty$ vector bundle over $M$. The fiber of $G$ at $x\in M$ will be denoted by $G_x$.
Let $E$ be a vector bundle over a $C^\infty$ paracompact manifold $M_1$. We write
$G\boxtimes E^*$ to denote the vector bundle over $M\times M_1$ with fiber over $(x, y)\in M\times M_1$
consisting of the linear maps from $E_y$ to $G_x$.  Let $Y\subset M$ be an open set.
From now on, the spaces of distribution sections of $G$ over $Y$ and
smooth sections of $G$ over $Y$ will be denoted by $D'(Y, G)$ and $C^\infty(Y, G)$, respectively.

Let $G$ and $E$ be $C^\infty$ vector
bundles over paracompact orientable $C^\infty$ manifolds $M$ and $M_1$, respectively, equipped with smooth densities of integration. If
$A: C^\infty_0(M_1,E)\To D'(M,G)$
is continuous, we write $A(x, y)$ to denote the distribution kernel of $A$.

Let $H(x,y)\in D'(M\times M_1,G\boxtimes E^*)$. We write $H$ to denote the unique
continuous operator $C^\infty_0(M_1,E)\To D'(M,G)$ with distribution kernel $H(x,y)$.
In this work, we identify $H$ with $H(x,y)$.

\section{Proof of Theorem~\ref{thm:1.1}}

Consider the affine complex space $\Complex^n$. Let $z=(z_1,\ldots,z_n)=(x_1,\ldots,x_{2n})$ be complex coordinates of $\Complex^n$, $z_j=x_{2j-1}+ix_{2j}$, $j=1,\ldots,n$. Put $d\mu(z)=2^ndx_1\cdots dx_{2n}$. The following follows from some elementary calculation. We omit the proof.

\begin{lemma}\label{lem:monomialintegrals}
Fix $q\in\{0,1,2,\ldots,n\}$. Put
\[\begin{split}
&\Td \lambda\abs{z}^2=\lambda_1|z_1|^2+\cdots+\lambda_q|z_q|^2-\lambda_{q+1}|z_{q+1}|^2-\cdots-\lambda_n|z_n|^2\ \ \mbox{if $1\leq q<n$},\\
&\Td \lambda\abs{z}^2=-\lambda_{1}|z_{1}|^2-\cdots-\lambda_n|z_n|^2\ \ \mbox{if $q=0$},\\
&\Td \lambda\abs{z}^2=\lambda_{1}|z_{1}|^2+\cdots+\lambda_n|z_n|^2\ \ \mbox{if $q=n$}.
\end{split}\]
		Consider the expression \(I(\alpha,\eta,\lambda)=\int_{\C^n}|z^\alpha|^2e^{-2\eta\Td\lambda|z|^2}d\mu(z)\) for \(\alpha\in\N^n_0\), \(\eta\in\R\).
		If $\lambda_j=0$ for some \(j\in\{1,\ldots,n\}\),  then \(I(\alpha,\eta,\lambda)=\infty\) for all \(\alpha\in\N^n_0\) and \(\eta\in\R\).
		
		Assume that all $\lambda_j$ are non-zero and let $n_{-}$ be the number of negative $\lambda_js$ and $n_{+}$ be the number of positive $\lambda_{j}s$. If
$q\not\in\{n_{-}, n_{+}\}$, then \(I(\alpha,\eta,\lambda)=\infty\) for all \(\alpha\in\N^n_0\) and \(\eta\in\R\).
	\end{lemma}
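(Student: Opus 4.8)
The plan is to exploit the product structure of the integrand and reduce everything to one–dimensional Gaussian–type integrals. Introduce signs $\epsilon_j\in\{+1,-1\}$ by writing $\Td\lambda\abs{z}^2=\sum_{j=1}^n\epsilon_j\lambda_j\abs{z_j}^2$; thus $\epsilon_j=+1$ for $1\le j\le q$ and $\epsilon_j=-1$ for $q<j\le n$ when $1\le q<n$, all $\epsilon_j=-1$ when $q=0$, and all $\epsilon_j=+1$ when $q=n$. In every one of these three cases exactly $q$ of the $\epsilon_j$ equal $+1$ and $n-q$ equal $-1$. Since $e^{-2\eta\Td\lambda\abs{z}^2}=\prod_{j=1}^ne^{-2\eta\epsilon_j\lambda_j\abs{z_j}^2}$ and $d\mu(z)=2^n\,dx_1\cdots dx_{2n}$, Tonelli's theorem for the nonnegative integrand gives
\[
I(\alpha,\eta,\lambda)=\prod_{j=1}^n\Bigl(2\int_{\C}\abs{z_j}^{2\alpha_j}e^{-2\eta\epsilon_j\lambda_j\abs{z_j}^2}\,dx_{2j-1}\,dx_{2j}\Bigr),
\]
each factor being a well-defined element of $(0,+\infty]$, strictly positive because the integrand is positive almost everywhere.

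Next I would evaluate each factor in polar coordinates $z_j=re^{i\theta}$, obtaining $4\pi\int_0^\infty r^{2\alpha_j+1}e^{-2\eta\epsilon_j\lambda_j r^2}\,dr$. This converges (and is then a standard Gamma integral) exactly when $\eta\epsilon_j\lambda_j>0$, and equals $+\infty$ when $\eta\epsilon_j\lambda_j\le 0$, since the integrand then equals $r^{2\alpha_j+1}$ (if the exponent vanishes) or grows exponentially (if it is negative); note that the factor $r^{2\alpha_j+1}$ is always integrable near $0$, so $\alpha$ plays no role in the convergence. Because a product of elements of $(0,+\infty]$ is finite if and only if every factor is finite, I conclude: for all $\alpha\in\N_0^n$, one has $I(\alpha,\eta,\lambda)<\infty$ precisely when $\eta\epsilon_j\lambda_j>0$ for every $j\in\{1,\ldots,n\}$, and $I(\alpha,\eta,\lambda)=+\infty$ otherwise.

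Finally I would derive the two assertions by a short sign count. If $\lambda_j=0$ for some $j$, then $\eta\epsilon_j\lambda_j=0$ for that $j$ no matter what $\eta$ is, so the convergence criterion fails and $I(\alpha,\eta,\lambda)=\infty$ for all $\alpha,\eta$. If instead all $\lambda_j$ are non-zero and $I(\alpha,\eta,\lambda)<\infty$ for some $\alpha$ and $\eta$, then necessarily $\eta\neq0$ and $\operatorname{sgn}\lambda_j=\operatorname{sgn}(\eta)\,\epsilon_j$ for all $j$; counting the indices with $\epsilon_j=+1$ (there are $q$ of them) and with $\epsilon_j=-1$ (there are $n-q$) then forces $\{n_-,n_+\}=\{q,n-q\}$, hence $q\in\{n_-,n_+\}$. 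Taking contrapositives yields exactly the stated conclusion: if $q\notin\{n_-,n_+\}$ then $I(\alpha,\eta,\lambda)=\infty$ for all $\alpha$ and $\eta$. I do not expect any genuine obstacle here — the computation is entirely elementary; the only points deserving a little attention are the boundary case where the Gaussian exponent is exactly zero (still divergent since $n\ge1$) and keeping the bookkeeping of the $\epsilon_j$ consistent across the three definitions of $\Td\lambda\abs{z}^2$.
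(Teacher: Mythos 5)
Your proof is correct. The paper itself omits the proof of this lemma, stating only that it ``follows from some elementary calculation,'' and your argument---Tonelli factorization into one--dimensional radial integrals, the convergence criterion $\eta\epsilon_j\lambda_j>0$ for each factor, and the final sign count giving $\{n_-,n_+\}=\{q,n-q\}$---is exactly the elementary calculation intended, with the relevant boundary cases (a vanishing exponent, and the fact that no factor can vanish so no $0\cdot\infty$ ambiguity arises in the product) handled properly.
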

	
	We pause and introduce some notations.
Choose $\chi(\theta)\in C_0^\infty(\mathbb R)$
so that $\chi(\theta)=1$ when $|\theta|<1$ and
$\chi(\theta)=0$ when $|\theta|>2$
and set $\chi_j(\theta)=\chi(\frac{\theta}{j}),j\in\mathbb N$. For any $u(z, x_{2n+1})\in\Omega^{0, q}(H_{n+1})$ with $\|u\|<\infty$, let
\begin{equation}
\hat u_j(z, \eta)=\int_{\mathbb R} u(z, x_{2n+1})\chi_j(x_{2n+1})e^{-ix_{2n+1}\eta}dx_{2n+1}\in\Omega^{0, q}(H_{n+1}), j=1, 2,\ldots.
\end{equation}
From Parseval's formula, we have
\begin{align*}
&\int_{H_{n+1}}\!\abs{\hat u_j(z,\eta)-\hat u_t(z,\eta)}^2d\eta d\mu(z)\\
&=2\pi\int_{H_{n+1}}\!\abs{u(z,x_{2n+1})}^2\abs{\chi_j(x_{2n+1})-\chi_t(x_{2n+1})}^2dx_{2n+1} d\mu(z)\To0,\  \ \mbox{as $j,t\To\infty$}.
\end{align*}
Thus there is $\hat u(z, \eta)\in L^2_{(0, q)}(H_{n+1})$ such that $\hat u_j(z, \eta)\rightarrow \hat u(z, \eta)$ in $L^2_{(0, q)}(H_{n+1})$. We call $\hat u(z, \eta)$ the partial Fourier transform of $u(z, x_{2n+1})$ with respect to $x_{2n+1}$. Formally,
\begin{equation}
\hat u(z, \eta)=\int_{\mathbb R} e^{-i x_{2n+1} \eta}u(z, x_{2n+1})dx_{2n+1}.
\end{equation}
Moreover, we have
\begin{equation}\label{neg1}
\int_{H_{n+1}}|\hat u(z, \eta)|^2d\mu(z)d\eta=2\pi \int_{H_{n+1}}|u(z, x_{2n+1})|^2d\mu(z)dx_{2n+1}.
\end{equation}
From Fubini's theorem, $\int_{\mathbb C^n}|\hat u(z, \eta)|^2 d\mu(z)<\infty$, for a.e. $\eta\in\mathbb R$. More precisely, there is a measure zero set $A_0\subset\mathbb R$ such that $\int_{\mathbb C^n}|\hat u(z, \eta)|^2d\mu(z)<\infty, ~\forall \eta\not\in A_0.$

 Similarly, let
\begin{equation}
\check u_j(z, \eta)=\frac{1}{2\pi}\int_{\mathbb R} u(z, x_{2n+1})\chi_j(x_{2n+1})e^{ix_{2n+1}\eta}dx_{2n+1}\in\Omega^{0, q}(H_{n+1}), j=1, 2,\ldots.
\end{equation}
Then $\check u_j(z, \eta)\rightarrow \check u(z, \eta)$ in $L^2_{(0, q)}(H_{n+1})$ for some $\check u(z,\eta)$. We call $\check u(z, \eta)$ the partial inverse Fourier transform of $u(z, x_{2n+1})$ with respect to $x_{2n+1}$.

Let $u, v\in L^2_{(0, q)}(H_{n+1})$. Assume that $\int |u(z, t)|^2dt<\infty$ and $\int |u(z, t)|dt<\infty$ for all $z\in\mathbb C^n.$ Then from Parseval's formula, it is not difficult to check that
\begin{equation}\label{e-gue170527cr}
\int\int \langle \hat v(z, \eta)| u(z, \eta)\rangle d\mu(z)d\eta=\int\int \langle v(z, x_{2n+1})|\int e^{ix_{2n+1}\eta}u(z, \eta)d\eta\rangle d\mu(z)dx_{2n+1}.
\end{equation}

\begin{lemma}\label{lem:equivkernel}
Let $u=\sideset{}{'}\sum_{l(J)=q}u_Jd\overline{z}_J\in L^2_{(0,q)}(H_{n+1})$. We have

	\begin{equation}\label{CR1}
u=\sideset{}{'}\sum_{l(J)=q}u_Jd\overline{z}_J\in \mathcal{H}^q_b(H_{n+1})
	\Leftrightarrow\begin{cases}
	\left(\frac{\partial}{\partial z_j}-i\lambda_j\overline{z}_j\frac{\partial}{\partial x_{2n+1}}\right) u_J=0 &\text{, if }j\in J,\\
	\left(\frac{\partial}{\partial \overline{z}_j}+i\lambda_jz_j\frac{\partial}{\partial x_{2n+1}}\right) u_J=0 &\text{, if }j\notin J.
	\end{cases}
\end{equation}
in the sense of distribution.
\end{lemma}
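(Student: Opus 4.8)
The plan is to reduce the condition $u\in\mathcal{H}^q_b(H_{n+1})$ to the pair of equations $\ddbar_bu=0$, $\ddbar^*_bu=0$, to compute these componentwise, and then to decouple the resulting first order system by means of a Bochner--Kodaira type identity special to $H_{n+1}$.

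First I would invoke the standard property of the Gaffney extension: since $\Box^{(q)}_b\geq0$ and $(\Box^{(q)}_bu|u)=\norm{\ddbar_bu}^2+\norm{\ddbar^*_bu}^2$ for $u\in{\rm Dom\,}\Box^{(q)}_b$, one has $u\in\mathcal{H}^q_b(H_{n+1})$ if and only if $\ddbar_bu=0$ and $\ddbar^*_bu=0$ in $L^2$; conversely, these two equations already place $u$ in ${\rm Dom\,}\Box^{(q)}_b$ with $\Box^{(q)}_bu=0$. Since $d\mu_{H_{n+1}}$ is a constant multiple of Lebesgue measure, an integration by parts gives $\overline Z_j^*=-Z_j$, and since the $d\overline z_j$ form a frame for $T^{\ast0,1}H_{n+1}$ with $\ddbar_bd\overline z_j=0$, so that $\ddbar_b$ acts on the coefficients through the $\overline Z_j$, one obtains for $u=\sideset{}{'}\sum_{l(J)=q}u_Jd\overline z^J\in\Omega^{0,q}(H_{n+1})$
\begin{align*}
\ddbar_bu&=\sideset{}{'}\sum_{l(K)=q+1}\Big(\sum_{j\in K}\varepsilon^K_j\,\overline Z_ju_{K\setminus\{j\}}\Big)d\overline z^K,\\
\ddbar^*_bu&=-\sideset{}{'}\sum_{l(L)=q-1}\Big(\sum_{j\notin L}\varepsilon_{j,L}\,Z_ju_{jL}\Big)d\overline z^L,
\end{align*}
with the usual combinatorial signs and $u_{jL}$ the strictly increasing rearrangement of $jL$. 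Because the Riemannian metric attached to $\langle\cdot|\cdot\rangle$ on $H_{n+1}$ is complete, $\Omega^{0,\bullet}_0(H_{n+1})$ is a core for both $\ddbar_b$ and $\ddbar^*_b$; in particular $\ddbar^*_b$ coincides with the distributional formal adjoint, so ``$\ddbar_bu=0$ and $\ddbar^*_bu=0$ in $L^2$'' is the same as ``the two displayed coefficient systems hold in the sense of distributions''.

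The crux is the identity
\[
\norm{\ddbar_bu}^2+\norm{\ddbar^*_bu}^2=\sideset{}{'}\sum_{l(J)=q}\Big(\sum_{j\in J}\norm{Z_ju_J}^2+\sum_{j\notin J}\norm{\overline Z_ju_J}^2\Big),\qquad u\in\Omega^{0,q}_0(H_{n+1}).
\]
To prove it I would first check that $\Box^{(q)}_b$ is diagonal in the frame $\{d\overline z^J\}$, namely that $(\Box^{(q)}_bv)_J=\big(\sum_{j\in J}\overline Z_j\overline Z_j^*+\sum_{j\notin J}\overline Z_j^*\overline Z_j\big)v_J$ for $v\in\Omega^{0,q}(H_{n+1})$. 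This is the computation that carries the argument, and it works precisely because the only nonvanishing bracket among $Z_1,\dots,Z_n,\overline Z_1,\dots,\overline Z_n$ is $[\overline Z_j,Z_j]=2i\lambda_jT$, so the potential off-diagonal contributions $[Z_j,\overline Z_k]$, $j\neq k$, drop out. Pairing the diagonal formula with $v_J$, integrating by parts with $\overline Z_j^*=-Z_j$, and using $(\Box^{(q)}_bu|u)=\norm{\ddbar_bu}^2+\norm{\ddbar^*_bu}^2$ then gives the identity for compactly supported $u$.

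Granting the identity, the implication ``$\Leftarrow$'' is immediate from the coefficient formulas: if $Z_ju_J=0$ for $j\in J$ and $\overline Z_ju_J=0$ for $j\notin J$, then (since $j\notin K\setminus\{j\}$ and $j\in jL$) every summand in $\ddbar_bu$ and in $\ddbar^*_bu$ vanishes, whence $u\in\mathcal{H}^q_b(H_{n+1})$. For ``$\Rightarrow$'', given $u\in\mathcal{H}^q_b(H_{n+1})$ I choose $u_k\in\Omega^{0,q}_0(H_{n+1})$ with $u_k\to u$, $\ddbar_bu_k\to\ddbar_bu=0$ and $\ddbar^*_bu_k\to\ddbar^*_bu=0$ in $L^2$ (possible since $\Omega^{0,q}_0$ is a core); the identity applied to each $u_k$ forces $Z_j(u_k)_J\to0$ for $j\in J$ and $\overline Z_j(u_k)_J\to0$ for $j\notin J$ in $L^2$, while $(u_k)_J\to u_J$ in $L^2$, so letting $k\to\infty$ in the distributional sense yields the stated equations. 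I expect the main obstacle to be exactly this passage from compactly supported forms to a general $L^2$ harmonic form --- the completeness/core input that is used both to identify $\ddbar^*_b$ with the formal adjoint and to run the final limiting step --- while the componentwise formulas for $\ddbar_b,\ddbar^*_b$, the diagonalization of $\Box^{(q)}_b$, and the cancellation of the cross terms are routine once the commutator relations have been recorded.
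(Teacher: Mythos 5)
Your proposal is correct and follows essentially the same route as the paper: both reduce membership in $\mathcal H^q_b$ to $\ddbar_bu=0$, $\ol\pr^*_bu=0$, use the diagonal form of $\Box^{(q)}_b$ in the frame $\{d\ol z^J\}$ to get the identity $\norm{\ddbar_bu}^2+\norm{\ol\pr_b^*u}^2=\sum_J'\bigl(\sum_{j\in J}\norm{Z_ju_J}^2+\sum_{j\notin J}\norm{\ol Z_ju_J}^2\bigr)$ on compactly supported forms, and pass to general $L^2$ forms by approximation. The only difference is presentational: where you cite completeness to assert that $\Omega^{0,q}_0(H_{n+1})$ is a joint core, the paper constructs the approximating sequence explicitly via cutoffs $\chi(x/j)$ with uniformly bounded $\ddbar_b\chi_j$ followed by Friedrichs' lemma.
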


\begin{proof}
Assume $u\in \mathcal H^q_b(H_{n+1})$. Then $\overline\partial_bu=0$ and $\overline\partial_b^\ast u=0.$
Choose $\chi(x)\in C_0^\infty(H_{n+1})$ such that $\chi\equiv1$ on $\{x\in H_{n+1}: |x|\leq 1\}$ and ${\rm supp}\chi\Subset\{x\in H_{n+1} : |x|<2\}.$ Set $\chi_j(x)=\chi(\frac{x}{j})$ and $v_j=u\chi_j(x)$. Then ${\rm supp}v_j\Subset \{x\in H_{n+1}: |x|<2j\}$. It is easy to see that $v_j\in{\rm Dom\,}\ddbar_b\bigcap{\rm Dom\,}\overline\partial_b^\ast $, $v_j\rightarrow u$ in $L^2_{(0, q)}(H_{n+1})$ and $\overline\partial_b v_j=\overline\partial_b\chi_j\wedge u$. Since $$\max\limits_{x\in H_{n+1}}|\overline\partial_b\chi_j(x)|=\max\limits_{x\in H_{n+1}}|\sum_k (\frac{\partial}{\partial\overline z_k}+i\lambda_kz_k\frac{\partial}{\partial x_{2n+1}})\chi(\frac{x}{j})d\overline z_k|\leq c,$$ where $c$ is a constant which does not depend on $j$. Thus, by Dominated convergence theorem $\overline\partial_b v_j\rightarrow 0$ in $L^2_{(0, q+1)}(H_{n+1})$. Similarly, $\overline\partial_b^\ast v_j\rightarrow 0$ in $L^2_{(0, q-1)}(H_{n+1}).$ By Friedrichs Lemma (see Appendix D in~\cite{CS01}), for each $j=1,2,\ldots$,  there is a $u_j\in \Omega^{0, q}_0(H_{n+1})$ such that
\begin{equation}\label{04-24}
\|u_j-v_j\|\leq \frac{1}{j},~ \|\overline\partial_b u_j-\overline\partial_b v_j\|\leq \frac{1}{j}~\text{and}~\|\overline\partial_b^\ast u_j-\overline\partial_b^\ast v_j\|\leq \frac{1}{j}.
\end{equation}
From (\ref{04-24}) we have $u_j\rightarrow u$ in $L^2_{(0,q)}(H_{n+1})$, $\overline\partial_b u_j\rightarrow 0$ in $L^2_{(0,q+1)}(H_{n+1})$ and $\overline\partial_b^\ast u_j\rightarrow 0$ in $L^2_{(0,q-1))}(H_{n+1})$. We deduce that
\begin{equation}\label{e-gue170527rcm}
(\,\Box^q_b u_j\,|\,u_j\,)=(\,\overline\partial_b u_j\,|\,\overline\partial_bu_j\,)+(\,\overline\partial_b^\ast v_j\,|\,\overline\partial_b^\ast v_j\,)\rightarrow0\ \ \mbox{as $j\To\infty$}.
\end{equation}
Write $u_j=\sideset{}{'}\sum_{l(J)=q}u_{j,J}d\ol z^J$.
It is well-known that (see Chapter 10 in~\cite{CS01})
\begin{equation}\label{e-gue170527rm}
\Box^q_b u_j=-\sideset{}{'}\sum_{l(J)=q}((\sum_{k\not\in J}Z_k\overline Z_k+\sum_{k\in J}\overline Z_k Z_k)u_{jJ})d\overline z_J,
\end{equation}
where $Z_k=\frac{\partial}{\partial z_k}-i\lambda_j\overline{z}_k\frac{\partial}{\partial x_{2n+1}}$, $k=1,\ldots,n$. From \eqref{e-gue170527rcm} and \eqref{e-gue170527rm},
we have
\begin{equation}\label{04-24-2}
\sum_{k\not\in J}\|\overline Z_k u_{jJ}\|^2+\sum_{k\in J}\|Z_k u_{jJ}\|^2\rightarrow 0,\ \ \mbox{as $j\rightarrow\infty$},\ \ \forall J, l(J)=q.
\end{equation}
From (\ref{04-24-2}), the direction ``$\Rightarrow$'' in (\ref{CR1}) follows.

Let $u=\sideset{}{'}\sum_{l(J)=q}u_Jd\overline{z}_J\in L^2_{(0,q)}(H_{n+1})$ be arbitrary. Assume that
\begin{equation}\label{e-gue170527ry}
\left(\frac{\partial}{\partial \overline{z}_j}+i\lambda_jz_j\frac{\partial}{\partial x_{2n+1}}\right) u_J=0\ \ \text{, if }j\notin J
\end{equation}
and
\begin{equation}\label{e-gue170527ryI}
\left(\frac{\partial}{\partial z_j}-i\lambda_j\ol z_j\frac{\partial}{\partial x_{2n+1}}\right) u_J=0\ \ \text{if }j\in J.
\end{equation}
We have
\begin{equation}\label{e-gue170527ryII}
\ddbar_bu=\sideset{}{'}\sum_{l(J)=q}\sum_{1\leq j\leq n, j\notin J}\left(\frac{\partial}{\partial \overline{z}_j}+i\lambda_jz_j\frac{\partial}{\partial x_{2n+1}}\right)u_Jd\ol z_j\wedge d\overline{z}_J.
\end{equation}
From \eqref{e-gue170527ryII} and \eqref{e-gue170527ry}, we get $\ddbar_bu=0$. Moreover, from \eqref{e-gue170527ryI}, it is easy to see that
\begin{equation}\label{e-gue170527ryIII}
(\,u\,|\,\ddbar_bv\,)=0,\ \ \forall v\in\Omega^{0,q-1}_0(H_{n+1}).
\end{equation}
Let $v\in{\rm Dom\,}\ddbar_b\bigcap L^2_{(0,q-1)}(H_{n+1})$. By using Friedrichs Lemma, we can find $v_j\in\Omega^{0,q-1}_0(H_{n+1})$, $j=1,2,\ldots$, such that
$v_j\To v\in L^2_{(0,q-1)}(H_{n+1})$ and $\ddbar_bv_j\To\ddbar_bv$ in $L^2_{(0,q)}(H_{n+1})$. From this observation and \eqref{e-gue170527ryIII}, we see that
\[(\,u\,|\,\ddbar_bv\,)=0\ \ \forall v\in{\rm Dom\,}\ddbar_b\bigcap L^2_{(0,q-1)}(H_{n+1})\]
and hence
$\ol{\pr}^*_bu=0$. The direction ``$\Leftarrow$'' follows. We get the conclusion of Lemma \ref{lem:equivkernel}.
\end{proof}

\begin{lemma}\label{fo1}
Given $u=\sideset{}{'}\sum_{l(J)=q}u_Jd\overline{z}_J\in\mathcal{H}^q_b(H_{n+1})$  we have
\begin{equation}\label{f1}
\begin{cases}
	\left(\frac{\partial}{\partial z_j}+\lambda_j\overline{z}_j\eta\right) \hat{u}_J(z,\eta)=0 &\text{if }j \in J,\\
	\left(\frac{\partial}{\partial \overline{z}_j}-\lambda_jz_j\eta\right) \hat{u}_J(z,\eta)=0 &\text{if }j\notin J
	\end{cases}
\end{equation}
for a.e. $\eta\in\Real$ and all $J$, $l(J)=q$,  in the sense of distribution.
\end{lemma}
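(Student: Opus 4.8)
The plan is to derive \eqref{f1} by taking the partial Fourier transform in $x_{2n+1}$ of the first order system that characterises $\mathcal{H}^q_b(H_{n+1})$ in Lemma~\ref{lem:equivkernel}; formally this just amounts to replacing $\frac{\partial}{\partial x_{2n+1}}$ by multiplication by $i\eta$ (with the convention $\hat u(z,\eta)=\int_{\mathbb R}e^{-ix_{2n+1}\eta}u(z,x_{2n+1})\,dx_{2n+1}$, so that $\frac{\partial}{\partial x_{2n+1}}\mapsto i\eta$), and essentially all the work lies in justifying this for merely $L^2$ coefficients. First I would invoke the ``$\Rightarrow$'' direction of Lemma~\ref{lem:equivkernel}: for $u=\sideset{}{'}\sum_{l(J)=q}u_Jd\overline z^J\in\mathcal{H}^q_b(H_{n+1})$ each coefficient $u_J\in L^2(H_{n+1})$ satisfies, in the sense of distributions on $H_{n+1}$,
\[
\Bigl(\frac{\partial}{\partial z_j}-i\lambda_j\overline z_j\frac{\partial}{\partial x_{2n+1}}\Bigr)u_J=0\ \ (j\in J),\qquad
\Bigl(\frac{\partial}{\partial\overline z_j}+i\lambda_jz_j\frac{\partial}{\partial x_{2n+1}}\Bigr)u_J=0\ \ (j\notin J).
\]

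Fix $J$ with $l(J)=q$. The core step is to show that the partial Fourier transform $\hat u_J\in L^2(H_{n+1})$ defined before Lemma~\ref{lem:equivkernel} satisfies, as a distribution on $\mathbb C^n\times\mathbb R$,
\[
\Bigl(\frac{\partial}{\partial z_j}+\lambda_j\overline z_j\eta\Bigr)\hat u_J=0\ \ (j\in J),\qquad
\Bigl(\frac{\partial}{\partial\overline z_j}-\lambda_jz_j\eta\Bigr)\hat u_J=0\ \ (j\notin J).
\]
To obtain the first of these I would pair the distributional identity $\bigl(\frac{\partial}{\partial z_j}-i\lambda_j\overline z_j\frac{\partial}{\partial x_{2n+1}}\bigr)u_J=0$ with the smooth compactly supported test functions $\phi(z)\chi_t(x_{2n+1})e^{-ix_{2n+1}\eta}$, $\phi\in C_0^\infty(\mathbb C^n)$, and then integrate the resulting identity against an arbitrary $\psi\in C_0^\infty(\mathbb R)$ with respect to $d\eta$. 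Using $\frac{\partial}{\partial x_{2n+1}}\bigl(\chi_t(x_{2n+1})e^{-ix_{2n+1}\eta}\bigr)=-i\eta\,\chi_t(x_{2n+1})e^{-ix_{2n+1}\eta}+\chi_t'(x_{2n+1})e^{-ix_{2n+1}\eta}$ and letting $t\to\infty$, the commutator term involving $\chi_t'=t^{-1}\chi'(\cdot/t)$ drops out, because $\chi_t'$ is supported in $\{t\le|x_{2n+1}|\le 2t\}$ and the $L^2$ mass of $u_J$ over that region tends to $0$ (a Cauchy--Schwarz estimate in $x_{2n+1}$ against the factor $t^{1/2}$ from the support length produces a bound $O(t^{-1/2})\|u_J\|$ times the tail norm), while the remaining two terms converge, by the $L^2$-convergence $\hat u_{J,t}\to\hat u_J$ (see the discussion around \eqref{neg1}) and Fubini, to the pairing of $\bigl(\frac{\partial}{\partial z_j}+\lambda_j\overline z_j\eta\bigr)\hat u_J$ with $\phi(z)\psi(\eta)$; one may also organise this through the Parseval identity \eqref{e-gue170527cr}. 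The analogous computation with $\frac{\partial}{\partial\overline z_j}+i\lambda_jz_j\frac{\partial}{\partial x_{2n+1}}$ gives the second identity. Note that $\lambda_j\overline z_j$ and $\lambda_jz_j$ depend only on $z$, hence commute with the $x_{2n+1}$-transform, and that a priori neither $\frac{\partial}{\partial x_{2n+1}}u_J$ nor $\overline z_ju_J$ need lie in $L^2$; handling this correctly, by working locally in $z$ and exploiting the cut-off $\chi_t$, is the only genuinely delicate point.

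It remains to descend from these identities on $\mathbb C^n\times\mathbb R$ to the pointwise-in-$\eta$ statement. Testing $\bigl(\frac{\partial}{\partial z_j}+\lambda_j\overline z_j\eta\bigr)\hat u_J=0$ against $\phi(z)\psi(\eta)$ gives $\int_{\mathbb R}\psi(\eta)\bigl(\int_{\mathbb C^n}\hat u_J(z,\eta)\bigl(-\tfrac{\partial\phi}{\partial z_j}(z)+\lambda_j\overline z_j\eta\,\phi(z)\bigr)d\mu(z)\bigr)d\eta=0$ for all $\psi\in C_0^\infty(\mathbb R)$, so the inner integral vanishes for a.e.\ $\eta$; ranging over a countable family of $\phi$ dense in $C_0^\infty(\mathbb C^n)$ and using that $\hat u_J(\cdot,\eta)\in L^2(\mathbb C^n)$ for $\eta$ outside a null set (Fubini applied to \eqref{neg1}), we conclude that for a.e.\ $\eta$ the section $z\mapsto\hat u_J(z,\eta)$ satisfies \eqref{f1} in the sense of distributions on $\mathbb C^n$; the signs match since $\frac{\partial}{\partial x_{2n+1}}\mapsto i\eta$ turns $-i\lambda_j\overline z_j\frac{\partial}{\partial x_{2n+1}}$ into $+\lambda_j\overline z_j\eta$ and $+i\lambda_jz_j\frac{\partial}{\partial x_{2n+1}}$ into $-\lambda_jz_j\eta$. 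The main obstacle, as indicated, is the rigorous interchange of the partial Fourier transform with these non-elliptic vector fields for $L^2$ data; once the cut-off commutators are controlled, the remaining steps are routine.
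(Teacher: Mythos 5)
Your proposal is correct, and at the top level it follows the same strategy as the paper (Fourier-transform the first-order system from Lemma~\ref{lem:equivkernel}, then pass from a distributional identity in $(z,\eta)$ to an a.e.-$\eta$ statement via a countable dense family of test functions in $z$). However, the key analytic step is carried out differently. The paper never tests the distributional equation $\overline Z_ju_J=0$ directly: instead it re-imports the smooth compactly supported approximating sequence $u_k$ from the proof of Lemma~\ref{lem:equivkernel}, for which $\|\overline Z_ju_{kJ}\|\to 0$ in $L^2$, sets $h(\eta)=-\int_{\C^n}\hat u_J(z,\eta)\overline{(\partial_{z_j}+\lambda_j\ol z_j\eta)\varphi(z)}\,d\mu(z)$, and uses the Parseval identity \eqref{e-gue170527cr} with the auxiliary function $S(z,x_{2n+1})=\varphi(z)\int e^{ix_{2n+1}\eta}g_R(\eta)\,d\eta$ (which is $L^2$ but \emph{not} compactly supported in $x_{2n+1}$) to compute $\int_{-R}^R|h(\eta)|^2\,d\eta=-\lim_k\int \overline Z_ju_{kJ}\,\overline{S}=0$ by Cauchy--Schwarz; the non-compact support of $S$ in $x_{2n+1}$ is harmless precisely because the integration by parts is performed on the compactly supported $u_k$. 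You instead keep $u_J$ itself, cut off the test function via $\chi_t(x_{2n+1})e^{-ix_{2n+1}\eta}$, and kill the resulting commutator term by the $O(t^{-1/2})$ Cauchy--Schwarz estimate on the annulus $t\le|x_{2n+1}|\le 2t$; this is valid and has the advantage of using only the \emph{statement} of Lemma~\ref{lem:equivkernel} rather than the internal approximating sequence from its proof, at the cost of the extra commutator estimate. One point to make explicit in your final descent step: when you extend from a countable family to all $\phi\in C_0^\infty(\C^n)$ you need the approximating sequence to have supports in a fixed ball (so that the unbounded weight $\ol z_j\eta$ stays bounded on the supports); the paper arranges this by choosing the dense family $\{f_k\}$ in $W^1(\C^n)$ with the additional support condition $\operatorname{supp}f_{k_t}\Subset B_r$, and your argument should do the same.
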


\begin{proof}
Let $A_0\subset\mathbb R$ be as in the discussion after  (\ref{neg1}). Let $\varphi\in C^\infty_0(\mathbb C^n)$. Fix $l(J)=q$ and fix $j=1,\ldots,n$ with $j\not\in J$. Put $h(\eta)=-\int_{\mathbb C^n}\hat u_J(z, \eta)\overline{(\frac{\partial}{\partial z_j}+\lambda_j\overline z_j\eta) \varphi(z)}d\mu(z)$ if $\eta\not\in A_0$, $h(\eta)=0$ if $\eta\in A_0$. We can check that
 \begin{equation}\label{4-24-3}
 |h(\eta)|^2\leq\int_{\mathbb C^n}|\hat u_J(z, \eta)|^2d\mu(z)\int_{\mathbb C^n}|(\frac{\partial}{\partial z_j}+\lambda_j\overline z_j\eta) \varphi(z)|^2d\mu(z).
 \end{equation}
 For $R>0$, put $g_{R}(\eta)=h(\eta)\chi_{[-R, R]}(\eta)$, where $\chi_{[-R, R]}(\eta)=1$ if $-R\leq \eta\leq R$ and $\chi_{[-R, R]}(\eta)=0$ if $|\eta|>R.$ From (\ref{4-24-3}), we have
 \begin{equation}
 \int |g_R(\eta)|^2d\eta=\int_{-R}^R|h(\eta)|^2d\eta\leq C_R\|\hat u_J(z, \eta)\|^2<\infty,
 \end{equation}
 where $C_R>0$ is a constant.
 Thus, $g_R(\eta)\in L^2(\mathbb R)\cap L^1(\mathbb R)$. From \eqref{e-gue170527cr}, we have
 \begin{equation}
 \begin{split}
 &\int_{-R}^R|h(\eta)|^2d\eta=\int_{\mathbb R} h(\eta)\overline g_{R}(\eta)d\eta\\
 &=\int\int\hat u_J(z, \eta)\overline{(\frac{\partial}{\partial z_j}+\lambda_j\overline z_j\eta) \varphi(z)g_R(\eta)} d\eta d\mu(z)\\
 &=\int\int u_J(z, x_{2n+1})\overline{\int_{\mathbb R}e^{ix_{2n+1}\eta}(\frac{\partial}{\partial z_j}+\lambda_j\overline z_j\eta) \varphi(z)g_R(\eta) d\eta}d\mu(z)dx_{2n+1}\\
 &=\int_{H_{n+1}} u_J(z, x_{2n+1})\overline{\left(\frac{\partial}{\partial z_j}-i\lambda_j\overline z_j\frac{\partial}{\partial x_{2n+1}}\right)\left(\varphi(z)\int_{\mathbb R}e^{ix_{2n+1}\eta}g_R(\eta)d\eta\right)}d\mu_{H_{n+1}}.
 \end{split}
 \end{equation}
 Put $S(z, x_{2n+1})=\varphi(z)\int_{\mathbb R}e^{ix_{2n+1}\eta}g_R(\eta)d\eta$. Let $u_j=\sideset{}{'}\sum_{l(J)=q}u_{jJ}d\ol z_J$, $j=1,2,\ldots$, be the sequence chosen in (\ref{04-24}). Then
 \begin{equation}\label{e-gue170527yc}
 \begin{split}
\int_{-R}^R|h(\eta)|^2d\eta
&=\lim_{k\rightarrow\infty}\int_{H_{n+1}}u_{kJ}\overline Z_j \overline {S(z, x_{2n+1})}d\mu_{H_{n+1}}\\
&=-\lim_{k\rightarrow\infty}\int_{H_{n+1}}\overline Z_j u_{k J}\overline{S(z, x_{2n+1})}d\mu_{H_{n+1}}.
\end{split}
 \end{equation}
 From (\ref{04-24-2}), \eqref{e-gue170527yc} and H\"older's inequality, we conclude that $\int_{-R}^R|h(\eta)|^2d\eta=0.$ Letting $R\rightarrow\infty$, we get $h(\eta)=0$ almost everywhere. Thus, we have proved that $\forall j\not\in J$ and for a given $\varphi(z)\in C_0^\infty(\mathbb C^n)$, $\int_{\mathbb C^n}\hat u_J(z, \eta)\overline{(\frac{\partial}{\partial z_j}+\lambda_j\overline z_j\eta) \varphi(z)}d\mu(z)=0$ for a.e. $\eta\in\Real$.

 Let us consider the Sobolev space $W^1(\mathbb C^n)$ of distributions in $\mathbb C^n$ whose derivatives of order $\leq 1$ are in $L^2.$ Since $W^1(\mathbb C^n)$ is separable and $C_0^\infty(\mathbb C^n)$ is dense in $W^1(\mathbb C^n)$, we can find $f_k\in C_0^\infty(\mathbb C^n)$, $k=1,2,\ldots$, such that $\{f_k\}^\infty_{k=1}$ is a dense subset of $W^1(\mathbb C^n)$. Moreover, we can find $\{f_k\}^\infty_{k=1}$ so that for all $g\in C_0^\infty(\mathbb C^n)$ with ${\rm supp}g\Subset B_r:=\{z\in\mathbb C^n: |z|<r\}, r>0$, we can find $f_{k_1}, f_{k_2}, \ldots, {\rm supp}f_{k_t}\Subset B_r, t=1, 2, \ldots, $ such that $f_{k_t}\rightarrow g$ in $W^1(\mathbb C^n)$, as $t\To\infty$.

 Now, for each $k$, we can repeat the method above and find a measurable set $A_k\supseteqq A_0$, $|A_k|=0$ such that
 $\int_{\mathbb C^n}\hat u_J(z, \eta)\overline{(\frac{\partial}{\partial z_j}+\lambda_j\overline z_j\eta) f_k(z)}d\mu(z)=0, ~\forall \eta\not\in A_k.$
 Put $A=\cup_k A_\lambda.$ Then $|A|=0$ and for all $\eta\not\in A$ and all $k$
 $$\int_{\mathbb C^n}\hat u_J(z, \eta)\overline{(\frac{\partial}{\partial z_j}+\lambda_j\overline z_j\eta) f_k(z)}d\mu(z)=0.$$
 Let $\varphi\in C_0^\infty(\mathbb C^n)$ with ${\rm supp}\varphi\Subset B_r.$ From the discussion above, we can find $f_{k_1}, f_{k_2}, \ldots, {\rm supp} f_{k_t}\Subset B_r$, $t=1, 2, \ldots, $ such that $f_{k_t}\rightarrow \varphi$ in $W^1(\mathbb C^n)$ , as $t\rightarrow \infty$. Then for $\eta\not\in A$,
 \begin{equation}
 \int_{\mathbb C^n}\hat u_J(z, \eta)\overline{(\frac{\partial}{\partial z_j}+\lambda_j\overline z_j\eta) \varphi(z)}d\mu(z)=\int_{\mathbb C^n}\hat u_J(z, \eta)\overline{(\frac{\partial}{\partial z_j}+\lambda_j\overline z_j\eta) (\varphi(z)-f_{k_t}(z))}d\mu(z).
 \end{equation}
 Fix $\eta\notin A$. By H\"older's inequality
 \begin{equation}
 \begin{split}
 &\left|\int_{\mathbb C^n}\hat u_J(z, \eta)\overline{(\frac{\partial}{\partial z_j}+\lambda_j\overline z_j\eta) (\varphi(z)-f_{k_t}(z))}d\mu(z)\right|\\
 &\leq C(\eta)\sum_{|\alpha|\leq1}\int_{\mathbb C^n}|\partial_{x}^\alpha(\varphi-f_{k_t})|^2d\mu(z)\rightarrow0,\ \ \mbox{as $t\rightarrow\infty$},
 \end{split}
 \end{equation}
 where $C(\eta)>0$ is a constant.
 Thus, for all $\eta\not\in A$,
 \begin{equation}
 \int_{\mathbb C^n}\hat u_J(z, \eta)\overline{(\frac{\partial}{\partial z_j}+\lambda_j\overline z_j\eta) \varphi(z)}d\mu(z)=0, ~\forall \varphi\in C_0^\infty(\mathbb C^n).
 \end{equation}
 We have proved the second case of (\ref{f1}) and the proof of the first case of (\ref{f1}) is the same. The lemma follows.
\end{proof}

It should be mentioned that the partial Fourier transform technique used in the proof of Lemma~\ref{fo1} was inspired by~\cite{HM12}.

\begin{proof}[Proof of Theorem~\ref{thm:1.1}]
Fix $q=0,1,\ldots,n$. Assume that
\begin{equation}\label{e-gue170528ry}
\begin{split}
&\mbox{$\lambda_j=0$ for some $j\in\set{1,\ldots,n}$}\\
&\mbox{ or all $\lambda_j$ are non-zero but $q\notin\set{n_-,n_+}$},\
\end{split}
\end{equation}
where $n_{-}$ denotes the number of negative $\lambda_js$ and $n_{+}$ denotes the number of positive $\lambda_{j}s$.
	Consider \(u=\sum_{l(J)=q}'u_Jd\overline{z}_J\in \mathcal{H}^q_b(H_{n+1})\). For every \(l(J)=q\), we can find
	\[(((z,x_{2n+1})\mapsto u_J(z,x_{2n+1}))\in L^2(H_{n+1})\] and hence
	\[((z,\eta)\mapsto \hat{u}_J(z,\eta))\in L^2(H_{n+1}),\]
	where \(\hat{u}_J\) is the partial Fourier transform of \(u_J\) with respect to \(x_{2n+1}\). Since \(u\in \mathcal{H}^q_b(H_{n+1})\), by Lemma \ref{fo1}, for a.e. $\eta\in\Real$, we have
	\begin{align}\label{eq:fouriereq}\begin{cases}
	\left(\frac{\partial}{\partial z_j}+\lambda_j\overline{z}_j\eta\right) \hat{u}_J=0 &\text{if }j \in J, \\
	\left(\frac{\partial}{\partial \overline{z}_j}-\lambda_jz_j\eta\right) \hat{u}_J=0 &\text{if }j\notin J,
	\end{cases}
	\end{align}
	for all \(l(J)=q\), $j=1,\ldots,n$.  We will show \(u_J=0\) for all \(l(J)=q\).
	So let \(J_0\), \(|J_0|=q\), be arbitrary. Without loss of generality we can assume \(J_0=\{1,\ldots,q\}\). In order to simplify the notation we write
	\[\begin{split}
&\Td \lambda\abs{z}^2=\lambda_1|z_1|^2+\ldots+\lambda_q|z_q|^2-\lambda_{q+1}|z_{q+1}|^2-\ldots-\lambda_n|z_n|^2\ \ \mbox{if $q\geq1$},\\
&\Td \lambda\abs{z}^2=-\lambda_{1}|z_{1}|^2-\ldots-\lambda_n|z_n|^2\ \ \mbox{if $q=0$}.
\end{split}\]
Then (\ref{eq:fouriereq}) reduces to
	\begin{equation}\label{fourier transform}
\begin{cases}
	\frac{\partial}{\partial z_j}\left(e^{\eta\Td\lambda|z|^2}\hat{u}_{J_0}(z,\eta)\right) =0 &\mbox{if $j\in J_0$,  for a.e. $\eta\in\mathbb R$},\\
	\frac{\partial}{\partial \overline{z}_j}\left(e^{\eta\Td\lambda|z|^2}\hat{u}_{J_0}(z,\eta)\right) =0 &\mbox{if $j\notin J_0$, for a.e. $\eta\in\mathbb R$}.
	\end{cases}\end{equation}
	For every $\eta\in\Real$, let
	\[\begin{split}
	F_{J_0}(z_1,\ldots,z_{n},\eta):=e^{\eta\Td\lambda|z|^2}\hat{u}_{J_0}(\ol z_1,\ldots,\ol z_q,z_{q+1},\ldots,z_n,\eta)\ \ \mbox{if $q\geq1$},\\
	F_{J_0}(z_1,\ldots,z_{n},\eta):=e^{\eta\Td\lambda|z|^2}\hat{u}_{J_0}(z_1,\ldots,z_q,z_{q+1},\ldots,z_n,\eta)\ \ \mbox{if $q=0$}.
	\end{split}\]
	From \eqref{fourier transform}, it is easy to see that \((z,\eta)\mapsto F_{J_0}(z,\eta)\) is holomorphic in \(z\) for almost every \(\eta\) and we have
	\begin{equation}\label{e-gue170608c}
	\int_{H_{n+1}}|F_{J_0}(z,\eta)|^2e^{-2\eta\Td\lambda|z|^2}d\mu(z)d\eta=\int_{H_{n+1}}|\hat{u}_{J_0}(z,\eta)|^2d\mu(z)d\eta<\infty
	\end{equation}
	 which implies
	 \begin{equation}\label{e-gue170528}
	 \mbox{$\int_{\C^n}|F_{J_0}(z,\eta)|^2e^{-2\eta\Td\lambda|z|^2}d\mu(z)<\infty$ for almost every $\eta$}.
	 \end{equation}
	 Let $B$ be a negligible set of $\Real$ such that for all $\eta\notin B$, $F_{J_0}(z,\eta)$ is holomorphic in $z$ and \eqref{e-gue170528} holds. For $\eta\notin B$, we have
	 \[F_{J_0}(z,\eta)=\sum_{\alpha\in\mathbb N^n_0}F_{J_0,\alpha}(\eta)z^\alpha.\]
	 Fix $\eta\notin B$ and fix a $\alpha_0\in\mathbb N^n_0$. It is easy to see that
	\[|F_{J_0,\alpha_0}(\eta)|^2I(\alpha_0,\eta,\lambda)\leq \int_{\C^n}|F_{J_0}(z,\eta)|^2e^{-2\eta\Td\lambda|z|^2}d\mu(z)<\infty,\]
	where \(I(\alpha_0,\eta,\lambda)=\int_{\C^n}|z^{\alpha_0}|^2e^{-2\eta\Td\lambda|z|^2}d\mu(z)\). Under the assumption~\ref{e-gue170528ry} and using Lemma \ref{lem:monomialintegrals}, we find \(I(\alpha_0,\eta,\lambda)=\infty\) for all \(\eta\in\R\) which implies \(F_{J_0,\alpha_0}(\eta)=0\). Thus,   for all \(\alpha\in\N^n_0\),  \(F_{J_0,\alpha}(\eta)=0\) and hence \(\int_{\C^n}|F_{J_0}(z,\eta)|^2e^{-2\eta\Td\lambda|z|^2}d\mu(z)=0\) for almost every \(\eta\in\R\). By Fubini's theorem, \eqref{neg1} and \eqref{e-gue170608c}, we get
	\[\|u_{J_0}\|^2=\frac{1}{2\pi}\|\hat{u}_{J_0}\|^2=\int_{H_{n+1}}|F_{J_0}(z,\eta)|^2e^{-2\eta\Td\lambda|z|^2}d\mu(z)d\eta=0.\]
We have proved that  \(u_J=0\), for all $l(J)=q$. Thus, $\mathcal{H}^q_b(H_{n+1})=\set{0}$ and Theorem~\ref{thm:1.1} follows.
	\end{proof}

\section{Complex Fourier integral operators}\label{s-gue170528}

Let $\varphi\in\{\varphi_-,\varphi_+\}$ be one of the two functions defined in \eqref{phase1}, that is
 \begin{equation*}
 \begin{split}
 &\varphi_-(x, y)\\
 &=-x_{2n+1}+y_{2n+1}+i\sum_{j=1}^n|\lambda_j||z_j-w_j|^2+i\sum_{j=1}^n
 \lambda_j(\overline z_jw_j-z_j\overline w_j)\in C^\infty(H_{n+1}\times H_{n+1}),\\
 &\varphi_+(x, y)\\
 &=x_{2n+1}-y_{2n+1}+i\sum_{j=1}^n|\lambda_j||z_j-w_j|^2+i\sum_{j=1}^n
 \lambda_j(z_j\ol w_j-\ol z_jw_j)\in C^\infty(H_{n+1}\times H_{n+1}).
 \end{split}
 \end{equation*}
Take $\chi\in C^\infty_0(\Real)$ with $\chi(x)=1$ if $\abs{x}\leq 1$ and $\chi(x)=0$ if $\abs{x}>2$. Fix $q=0,1,\ldots,n$. For $u\in\Omega^{0,q}_0(H_{n+1})$, by using integration by parts with respect to $y_{2n+1}$ several times, we can show that
\[\lim_{\varepsilon\To0^+}\int^\infty_0\int_{H_{n+1}}e^{it\varphi(x,y)}t^n\chi(\varepsilon t)u(y)d\mu_{H_{n+1}}(y)dt\] exists for every $x\in H_{n+1}$,
\begin{equation}\label{e-gue170528ay}
\lim_{\varepsilon\To0^+}\int^\infty_0\int_{H_{n+1}}e^{it\varphi(x,y)}t^n\chi(\varepsilon t)u(y)d\mu_{H_{n+1}}(y)dt\in\Omega^{0,q}(H_{n+1})
\end{equation}
and the operator
\begin{equation}\label{e-gue170528ayI}
\begin{split}
\int^\infty_0e^{it\varphi(x,y)}t^ndt: \Omega^{0,q}_0(H_{n+1})&\To\Omega^{0,q}(H_{n+1}),\\
u&\mapsto \lim_{\varepsilon\To0^+}\int^\infty_0\int_{H_{n+1}}e^{it\varphi(x,y)}t^n\chi(\varepsilon t)u(y)d\mu_{H_{n+1}}(y)dt
\end{split}
\end{equation}
is continuous. The operator $\int^\infty_0e^{it\varphi(x,y)}t^ndt$ is a complex Fourier integral operator in the sense of \cite{GrSj94}. Again, by using integration by parts with respect to $y_{2n+1}$ several times, we can show that
\begin{equation}\label{e-gue170528ayII}
\begin{split}
&\lim_{\varepsilon\To0^+}\int_{H_{n+1}}\int_0^\infty e^{-t\bigr(-i(\varphi+i\varepsilon)\bigr)}t^nu(y)dtd\mu_{H_{n+1}}(y)\\
&=\lim_{\varepsilon\To0^+}\int^\infty_0\int_{H_{n+1}}e^{it\varphi(x,y)}t^n\chi(\varepsilon t)u(y)d\mu_{H_{n+1}}(y)dt.
\end{split}
\end{equation}
Note that
\begin{equation}\label{e-gue170528ayIII}
\int_0^\infty e^{-tx}t^mdt=m! x^{-m-1}\ \ \text{if}~ m\in\mathbb Z, m\geq0.
\end{equation}
From \eqref{e-gue170528ayIII} and \eqref{e-gue170528ayII}, we deduce that
\[\begin{split}
&\lim_{\varepsilon\To0^+}\int_{H_{n+1}}\frac{1}{\bigr(-i(\varphi(x,y)+i\varepsilon)\bigr)^{n+1}}u(y)d\mu_{H_{n+1}}(y)\\
&=\lim_{\varepsilon\To0^+}\frac{1}{n!}\int^\infty_0\int_{H_{n+1}}e^{it\varphi(x,y)}t^n\chi(\varepsilon t)u(y)d\mu_{H_{n+1}}(y)dt\end{split}\]
and hence $I^{(q)}_\varphi=\frac{1}{n!}\int^\infty_0e^{it\varphi(x,y)}t^ndt$ on $\Omega^{0,q}_0(H_{n+1})$, where $I^{(q)}_\varphi$ is given by \eqref{e-gue170525I}.
We need the following

\begin{theorem}\label{tema2}
There is a constant $C>0$ such that for all $\varepsilon>0$ and $u\in\Omega^{0,q}_0(H_{n+1})$, we have
\[\int_{H_{n+1}}\abs{\int_{H_{n+1}}\int^\infty_0\chi(\varepsilon t)e^{it\varphi(x,y)}t^nu(y)d\mu_{H_{n+1}}(y)dt}^2d\mu_{H_{n+1}}(x)\leq C\norm{u}^2.\]
\end{theorem}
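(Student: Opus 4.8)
The plan is to reduce everything to a one-dimensional $L^2$-boundedness statement in the variable $x_{2n+1}$, with the $\C^n$-variables playing the role of a parameter, and then to exploit the explicit Gaussian structure of $\varphi$ in the $z$-variables. First I would write $\varphi(x,y) = \pm(x_{2n+1}-y_{2n+1}) + i\Phi(z,w)$, where $\Phi(z,w) = \sum_j |\lambda_j||z_j-w_j|^2 \mp i\sum_j \lambda_j(\ldots)$ has nonnegative real part (indeed $\mathrm{Re}\,\Phi(z,w)=\sum_j|\lambda_j||z_j-w_j|^2\geq 0$), so that $\mathrm{Im}\,\varphi \geq 0$ and the oscillatory $t$-integral converges after the cutoff $\chi(\varepsilon t)$. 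Performing the $t$-integration first, $\int_0^\infty \chi(\varepsilon t)e^{it\varphi}t^n\,dt$ is, up to the harmless cutoff, essentially $n!\,(-i\varphi)^{-n-1}$; the point of keeping $\chi(\varepsilon t)$ and the $\varepsilon$ in $\varphi+i\varepsilon$ is only to make every manipulation rigorous, and the bound $C$ must be uniform in $\varepsilon$.

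The key step is to take the partial Fourier transform in $x_{2n+1}$ (as set up in Section~\ref{s:prelim}), which turns the convolution-in-$x_{2n+1}$ structure coming from the $\pm(x_{2n+1}-y_{2n+1})$ term into multiplication. Concretely, for $\varphi=\varphi_-$ one has
\[
\int_0^\infty \chi(\varepsilon t)e^{it(-x_{2n+1}+y_{2n+1})}e^{-t\Phi(z,w)}t^n\,dt,
\]
and integrating against $e^{-ix_{2n+1}\eta}$ in $x_{2n+1}$ localizes $t$ near $-\eta$ (in the limit $\varepsilon\to 0$), so the operator becomes, on the Fourier side, multiplication of $\hat u_J(w,\eta)$ by a kernel of the form $c_n |\eta|^n e^{-|\eta|\,\Phi(z,w)}\mathbbm 1_{\{\eta<0\}}$ (the sign of $\eta$ being dictated by $\mathrm{Re}\,\Phi\geq 0$ forcing convergence). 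By the Plancherel identity \eqref{neg1}, it then suffices to bound, for a.e.\ fixed $\eta$, the $L^2(\C^n_z)$ norm of $\int_{\C^n} |\eta|^n e^{-|\eta|\Phi(z,w)}\hat u_J(w,\eta)\,d\mu(w)$ by $C\|\hat u_J(\cdot,\eta)\|_{L^2(\C^n)}$ with $C$ independent of $\eta$. After the rescaling $z\mapsto z/\sqrt{|\eta|}$, $w\mapsto w/\sqrt{|\eta|}$ the parameter $\eta$ disappears entirely and we are left with a fixed integral operator on $L^2(\C^n)$ with Gaussian kernel $e^{-\sum_j|\lambda_j||z_j-w_j|^2 \mp \ldots}$, which is bounded by Schur's test: the kernel is, up to a unitary ($|\cdot|=1$) factor, $\prod_j e^{-|\lambda_j||z_j-w_j|^2}$, whose $z$- and $w$-marginals are finite constants depending only on $\lambda_1,\dots,\lambda_n$ and $n$. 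This yields the desired $C$.

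The main obstacle, and the part requiring care rather than cleverness, is making the interchange of the $t$-, $y$-, and $x_{2n+1}$-integrations legitimate for all $\varepsilon>0$ with a genuinely $\varepsilon$-uniform constant: the inner integral $\int_0^\infty\chi(\varepsilon t)e^{it\varphi}t^n\,dt$ is only conditionally convergent as $\varepsilon\to 0$ on the set where $\mathrm{Re}\,\Phi(z,w)=0$, i.e.\ $z_j=w_j$ for all $j$ (a measure-zero set, but one must integrate by parts in $y_{2n+1}$, exactly as flagged in the text before \eqref{e-gue170528ay}, to gain decay in $t$). I would handle this by the integration-by-parts-in-$y_{2n+1}$ argument to replace $t^n$ by $t^{n-N}$ times derivatives of $u$ for $N$ large, guaranteeing absolute convergence, apply Fubini, and then recognize that after the partial Fourier transform the $\varepsilon$-cutoff converges to the sharp cutoff $\mathbbm 1_{\{\eta<0\}}$ in a way that only improves the Schur bound (one is integrating $|\eta|^n e^{-|\eta|\Phi}$ against a cutoff bounded by $1$). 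Once absolute convergence is in hand, the Plancherel reduction, the rescaling, and Schur's test are all routine, and the constant $C$ one extracts is manifestly independent of $\varepsilon$ and of $u$.
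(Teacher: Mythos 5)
Your proposal is correct and follows essentially the same route as the paper: a partial Fourier transform in the last coordinate, Plancherel, and then an $L^2(\C^n)$ bound for the Gaussian kernel $t^n e^{-t\abs{\lambda}\abs{z-w}^2}$ that is uniform in the frequency parameter. The paper implements your Schur test as a Cauchy--Schwarz splitting of the Gaussian and computes $\int e^{-t\abs{\lambda}\abs{z-w}^2}d\mu(w)\sim Ct^{-n}$ directly instead of rescaling, and it needs no integration by parts here since $\chi^2(\varepsilon t)\leq 1$ already makes the bound $\varepsilon$-uniform; these are only cosmetic differences.
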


\begin{proof}
We may assume that $\varphi=\varphi_-$. For the case $\varphi=\varphi_+$, the proof is the same.
Let $u\in\Omega^{0,q}_0(H_{n+1})$ be a compactly supported \((0,q)\)-form and set
\[\check u(y',t):=\frac{1}{2\pi}\int u(y',y_{2n+1})e^{ity_{2n+1}}dy_{2n+1},\]
 where $y'=(y_1,\ldots,y_{2n})$
By Parseval's formula, we have
\begin{equation}\label{e-I}
\int\abs{\check u(y',t)}^2dt=\frac{1}{2\pi}\int\abs{u(y',y_{2n+1})}^2dy_{2n+1}.
\end{equation}
Let
\[g_\varepsilon(z,t):=\int \chi(\varepsilon t)\check u(y',t)\chi_{[0,\infty)}(t)e^{-t\abs{\lambda}\abs{z-w}^2-t\lambda(\ol zw-z\ol w)}d\mu(y'), \]
where $\chi_{[0,\infty)}(t)=1$ if $t\in[0,\infty)$, $\chi_{[0,\infty)}(t)=0$ if $t\notin[0,\infty)$, $\abs{\lambda}\abs{z-w}^2:=\sum^n_{j=1}\abs{\lambda_j}\abs{z_j-w_j}^2$, $i\lambda(\ol zw-z\ol w)=i\sum^n_{j=1}\lambda_j(\ol z_jw_j-z_j\ol w_j)$ and $d\mu(y')=2^ndy_1\cdots dy_{2n}$. Then we find
\begin{equation}\label{e-II}
\int_{H_{n+1}}\int^\infty_0\chi(\varepsilon t)e^{it\varphi(x,y)}t^nu(y)d\mu_{H_{n+1}}(y)dt=(2\pi)\int t^ng_\varepsilon(z,t)e^{-itx_{2n+1}}dt.
\end{equation}
By Parseval's formula again, we have
\begin{equation}\label{e-III}
\int\abs{\int t^ng_\varepsilon(z,t)e^{-itx_{2n+1}}dt}^2dx_{2n+1}=(2\pi)\int t^{2n}\abs{g_\varepsilon(z,t)}^2dt.
\end{equation}
From \eqref{e-I}, \eqref{e-II} and \eqref{e-III}, we have
\begin{equation}
\begin{split}
&\int_{H_{n+1}}\abs{\int^\infty_0\chi(\varepsilon t)e^{it\varphi(x,y)}t^nu(y)d\mu_{H_{n+1}}(y)dt}^2d\mu(z)dx_{2n+1}\\
&=(4\pi)^2\int_{H_{n+1}}\abs{\int t^ng_\varepsilon(z,t)e^{-itx_{2n+1}}dt}^2dx_{2n+1}d\mu(z)\\
&=(8\pi)^3\int_{H_{n+1}} t^{2n}\abs{g_\varepsilon(z,t)}^2d\mu(z)dt\\
&=(8\pi)^3\int_{H_{n+1}} t^{2n}\chi^2(\varepsilon t)\abs{\int_{\C^n}\check u(y',t)\chi_{[0,\infty)}(t)e^{-t\abs{\lambda}\abs{z-w}^2-t\lambda(\ol zw-z\ol w)}d\mu(y')}^2d\mu(z)dt\\
&\leq C_1\int_{H_{n+1}} t^{2n}\chi^2(\varepsilon t)\Bigr(\int_{\C^n}\abs{\check u(y',t)\chi_{[0,\infty)}(t)}^2e^{-t\abs{\lambda}\abs{z-w}^2}d\mu(w)\int_{\C^n} e^{-t\abs{\lambda}\abs{z-w}^2}d\mu(w)\Bigr)d\mu(z)dt\\
&\leq C_2 \int_{H_{n+1}}\int_{\C^n} t^{n} \abs{\chi(\varepsilon t)\check u(y',t)\chi_{[0,\infty)}(t)}^2e^{-t\abs{\lambda}\abs{z-w}^2}d\mu(z)d\mu(w)dt\\
&\leq C_3 \int_{H_{n+1}}\abs{\check u(y',t)\chi_{[0,\infty)}(t)}^2d\mu(w)dt\\
&\leq C_3\int_{H_{n+1}}\abs{\check u(y',t)}^2d\mu(w)dt=\frac{C_3}{2\pi}\int_{H_{n+1}}\abs{u(y',y_{2n+1})}^2d\mu(y),
\end{split}
\end{equation}
where $C_1>0, C_2>0, C_3>0$ are constants independent of $\varepsilon$ and $u$.
The theorem follows.
\end{proof}

From Theorem~\ref{tema2}, we deduce

\begin{corollary}\label{c-gue170528}
There is a constant $C>0$ such that
\[\norm{I^{(q)}_{\varphi}u}\leq C\norm{u},\ \ \forall u\in\Omega^{0,q}_0(H_{n+1}).\]
Thus, we can extend $I^{(q)}_{\varphi}$ to $L^2_{(0,q)}(H_{n+1})$ in the standard way and we have that
\[I^{(q)}_{\varphi}: L^2_{(0,q)}(H_{n+1})\To L^2_{(0,q)}(H_{n+1})\]
is continuous.
\end{corollary}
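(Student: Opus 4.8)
The plan is to read off the estimate directly from Theorem~\ref{tema2} by letting $\varepsilon\To0^+$. Recall from the discussion preceding Theorem~\ref{tema2} (in particular \eqref{e-gue170528ay}, \eqref{e-gue170528ayI} and the computation following \eqref{e-gue170528ayIII}) that for $u\in\Omega^{0,q}_0(H_{n+1})$ the limit
\[F(x):=\lim_{\varepsilon\To0^+}\int_{H_{n+1}}\int_0^\infty\chi(\varepsilon t)e^{it\varphi(x,y)}t^nu(y)\,dt\,d\mu_{H_{n+1}}(y)\]
exists for every $x\in H_{n+1}$, defines an element of $\Omega^{0,q}(H_{n+1})$, and satisfies $F=n!\,I^{(q)}_\varphi u$ with $I^{(q)}_\varphi$ as in \eqref{e-gue170525I}. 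Writing $F_\varepsilon(x)$ for the $\varepsilon$-truncated double integral appearing under the limit, we thus have $F_\varepsilon(x)\to F(x)$ pointwise in $x$ as $\varepsilon\To0^+$.

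First I would apply Theorem~\ref{tema2}, which furnishes a constant $C>0$, independent of $\varepsilon$ and of $u$, such that $\int_{H_{n+1}}\abs{F_\varepsilon(x)}^2d\mu_{H_{n+1}}(x)\le C\norm{u}^2$ for all $\varepsilon>0$. Since $\abs{F_\varepsilon}^2\to\abs{F}^2$ pointwise, Fatou's lemma then gives
\[(n!)^2\norm{I^{(q)}_\varphi u}^2=\int_{H_{n+1}}\abs{F(x)}^2d\mu_{H_{n+1}}(x)\le\liminf_{\varepsilon\To0^+}\int_{H_{n+1}}\abs{F_\varepsilon(x)}^2d\mu_{H_{n+1}}(x)\le C\norm{u}^2.\]
Hence $\norm{I^{(q)}_\varphi u}\le\frac{\sqrt C}{n!}\norm{u}$ for all $u\in\Omega^{0,q}_0(H_{n+1})$, which is the asserted estimate (with the constant renamed).

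It then remains to extend $I^{(q)}_\varphi$ to all of $L^2_{(0,q)}(H_{n+1})$, and here the argument is the standard bounded-linear-extension principle: $\Omega^{0,q}_0(H_{n+1})$ is dense in $L^2_{(0,q)}(H_{n+1})$ by the very definition of the latter as a completion, and a bounded linear map from a dense subspace of a Banach (here Hilbert) space into a Banach space extends uniquely to a bounded linear map on the whole space with the same operator-norm bound; this yields the continuity of $I^{(q)}_\varphi\colon L^2_{(0,q)}(H_{n+1})\To L^2_{(0,q)}(H_{n+1})$. I do not expect any genuine obstacle here: the only step that needs a moment of attention is the interchange of the limit in $\varepsilon$ with the $L^2$-norm, and this is handled cleanly by Fatou's lemma as above. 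Alternatively one could observe that $\{F_\varepsilon\}_{\varepsilon>0}$ lies in a fixed ball of $L^2_{(0,q)}(H_{n+1})$, extract a weakly convergent subsequence, identify its weak limit with $F=n!\,I^{(q)}_\varphi u$ using the pointwise convergence, and conclude by weak lower semicontinuity of the norm.
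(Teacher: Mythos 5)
Your argument is correct and is essentially the deduction the paper intends (the paper simply states that the corollary follows from Theorem~\ref{tema2} without writing out the limiting step): the uniform-in-$\varepsilon$ bound plus pointwise convergence of the truncations, Fatou's lemma, the identification $F=n!\,I^{(q)}_\varphi u$, and the standard density extension. No gaps.
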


\section{Proof of  Theorem~\ref{main theorem 2}}\label{s-gue170528e}

In this Section, we will prove Theorem~\ref{main theorem 2}.  We assume that $\lambda_j>0$, for all $j=1,2,\ldots,n$. Put
\begin{equation}\label{e-gue170528xr}
\tilde S^{(0)}:=\frac{\abs{\lambda_1}\cdots\abs{\lambda_n}}{2\pi^{n+1}}n!I^{(0)}_{\varphi_-}.
\end{equation}

\begin{lemma}\label{l-gue170528xr}
For every $u\in L^2(H_{n+1})$, we have $\tilde S^{(0)}u\in\mathcal H^{0}_b(H_{n+1})$.
\end{lemma}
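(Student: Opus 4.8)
The plan is to show directly that $\tilde S^{(0)}u$ lies in $\Ker\Box^{(0)}_b$ by verifying that $\ddbar_b(\tilde S^{(0)}u)=0$ and $\ol{\pr}^*_b(\tilde S^{(0)}u)=0$ in the sense of distributions; since $\mathcal H^0_b(H_{n+1})=\Ker\Box^{(0)}_b$ (and for functions $\Box^{(0)}_b u = \ol{\pr}^*_b\ddbar_b u$), this suffices. By Corollary~\ref{c-gue170528} the operator $I^{(0)}_{\varphi_-}$ is $L^2$-bounded, and it is enough to check the two equations on a dense subspace, so I would first take $u\in\Omega^{0,0}_0(H_{n+1})$ and work with the explicit oscillatory-integral representation $\tilde S^{(0)}u = c_n\lim_{\varepsilon\to0^+}\int_0^\infty\int_{H_{n+1}}e^{it\varphi_-(x,y)}t^n\chi(\varepsilon t)u(y)\,d\mu_{H_{n+1}}(y)\,dt$ established in Section~\ref{s-gue170528}, where $c_n = \frac{\abs{\lambda_1}\cdots\abs{\lambda_n}}{2\pi^{n+1}}$.

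The key computation is that the phase $\varphi_-(x,y) = -x_{2n+1}+y_{2n+1}+i\sum_j|\lambda_j||z_j-w_j|^2+i\sum_j\lambda_j(\ol z_j w_j - z_j\ol w_j)$ is annihilated by the CR vector fields $\ol Z_j = \frac{\partial}{\partial\ol z_j}+i\lambda_j z_j\frac{\partial}{\partial x_{2n+1}}$ acting in the $x$-variable. Concretely, $\frac{\partial}{\partial\ol z_j}\varphi_- = i|\lambda_j|(z_j-w_j) - i\lambda_j w_j$ and $\frac{\partial}{\partial x_{2n+1}}\varphi_- = -1$, so, using $\lambda_j>0$ hence $|\lambda_j|=\lambda_j$, one gets $\ol Z_j\varphi_- = i\lambda_j(z_j-w_j) - i\lambda_j w_j - i\lambda_j z_j = -2i\lambda_j w_j$... so in fact the right bookkeeping shows $\ol Z_j$ applied to the phase produces a factor proportional to $(z_j - w_j)$ or to $w_j$ that, after differentiating $e^{it\varphi_-}$ in $x$, can be traded (by integrating by parts in $y$, exactly as in the existence argument of Section~\ref{s-gue170528}) against a $\partial/\partial w_j$ falling on $u$; since $u$ runs over all test functions this does not immediately vanish, so the cleaner route is: differentiate $e^{it\varphi_-}$ under the integral sign, observe $\ol Z_j^{(x)}e^{it\varphi_-} = it(\ol Z_j^{(x)}\varphi_-)e^{it\varphi_-}$, and check that $\ol Z_j^{(x)}\varphi_- = -\ol Z_j^{(y)}\varphi_-$ up to the sign conventions relating the $x$- and $y$-derivatives, so that the $x$-derivative can be converted to a $y$-derivative hitting $u$; integrating by parts in $y$ then transfers it to $\ol{Z}_j^{(y)}$ or to the conjugate operator, and one shows the boundary terms vanish and the resulting kernel, after the $t$-integration, still converges. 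Since $\ddbar_b(\tilde S^{(0)}u) = \sum_j \ol Z_j^{(x)}(\tilde S^{(0)}u)\,d\ol z_j$, establishing $\ol Z_j^{(x)}(\tilde S^{(0)}u)=0$ for each $j$ gives $\ddbar_b(\tilde S^{(0)}u)=0$.

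For $\ol{\pr}^*_b(\tilde S^{(0)}u)=0$ on functions there is nothing to prove, since $\ol{\pr}^*_b$ maps $(0,0)$-forms to $0$; thus once $\ddbar_b(\tilde S^{(0)}u)=0$ is established we get $\Box^{(0)}_b(\tilde S^{(0)}u)=\ol{\pr}^*_b\ddbar_b(\tilde S^{(0)}u)=0$, i.e. $\tilde S^{(0)}u\in\mathcal H^0_b(H_{n+1})$. Finally I would pass from $u\in\Omega^{0,0}_0(H_{n+1})$ to general $u\in L^2(H_{n+1})$: given $u_k\in\Omega^{0,0}_0$ with $u_k\to u$ in $L^2$, Corollary~\ref{c-gue170528} gives $\tilde S^{(0)}u_k\to\tilde S^{(0)}u$ in $L^2$, and since $\mathcal H^0_b(H_{n+1})=\Ker\Box^{(0)}_b$ is a closed subspace of $L^2(H_{n+1})$ (as the kernel of a self-adjoint operator), the limit $\tilde S^{(0)}u$ lies in $\mathcal H^0_b(H_{n+1})$ as well. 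The main obstacle I anticipate is the careful justification of differentiating through the regularized integral $\int_0^\infty\int_{H_{n+1}}e^{it\varphi_-}t^n\chi(\varepsilon t)u\,d\mu_{H_{n+1}}\,dt$ and showing that the integration-by-parts in $y_{2n+1}$ (needed to see convergence and to move derivatives onto $u$) produces no surviving boundary contributions and commutes with the limit $\varepsilon\to0^+$ — this is exactly the kind of estimate already used in Section~\ref{s-gue170528}, so I would invoke those manipulations rather than redo them.
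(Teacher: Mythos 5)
The overall architecture of your plan matches the paper's proof: check $\ddbar_b(\tilde S^{(0)}u)=0$ for $u\in C^\infty_0(H_{n+1})$ by differentiating the regularized oscillatory integral, note that $\ol\pr^*_b$ is trivial on $(0,0)$-forms, and then pass to general $u\in L^2(H_{n+1})$ via the $L^2$-boundedness from Corollary~\ref{c-gue170528} together with continuity of $\ddbar_b$ into $D'(H_{n+1})$. That part is fine.

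However, the central computation — the actual reason the lemma is true — is wrong in your write-up, and the workaround you propose in its place does not close the gap. With $\varphi_-$ as in \eqref{phase1} one has $\frac{\partial}{\partial\ol z_j}\bigl(\ol z_jw_j-z_j\ol w_j\bigr)=+w_j$, not $-w_j$, so
\[
\ol Z_j^{(x)}\varphi_-=\Bigl(\frac{\partial}{\partial\ol z_j}+i\lambda_jz_j\frac{\partial}{\partial x_{2n+1}}\Bigr)\varphi_-
= i\abs{\lambda_j}(z_j-w_j)+i\lambda_jw_j-i\lambda_jz_j,
\]
which vanishes identically precisely because $\lambda_j>0$ (so $\abs{\lambda_j}=\lambda_j$). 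This is the whole point: the phase $\varphi_-$ is CR in the $x$-variable, so $\ol Z_j^{(x)}e^{it\varphi_-}=it\bigl(\ol Z_j^{(x)}\varphi_-\bigr)e^{it\varphi_-}=0$ and $\ddbar_b$ kills the regularized integral with no further work (this is exactly \eqref{CCR2} in the paper). Your sign error produced $\ol Z_j^{(x)}\varphi_-=-2i\lambda_jw_j\neq0$, and the substitute you then sketch — converting the $x$-derivative into a $y$-derivative and integrating by parts so that it falls on $u$ — cannot succeed: after integration by parts the derivative lands on an arbitrary test function $u$, and as you yourself note the resulting expression "does not immediately vanish"; indeed there is no mechanism left to make it vanish. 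So as written the proof of $\ddbar_b(\tilde S^{(0)}u)=0$ is not established. Redo the Wirtinger derivative of the phase and the argument collapses to the paper's one-line identity; the rest of your limiting argument then goes through.
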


\begin{proof}
Let $\chi\in C^\infty_0(\Real)$ be as in the beginning of Section~\ref{s-gue170528}. Fix $\varepsilon>0$. We first show that  for all $u\in C^\infty_0(H_{n+1})$, we have
\begin{equation}\label{CR1a}
I^{(0)}_{\varphi_-,\varepsilon}u:=\frac{1}{n!}\int_{H_{n+1}}\int_0^\infty e^{it\varphi_-(x, y)}t^nu(y)\chi(\varepsilon t)dtd\mu_{H_{n+1}}(y)\in \mathcal H^{0}_b(H_{n+1}).
\end{equation}
Let  $u\in C^\infty_0(H_{n+1})$. For every $j=1, \cdots, n$, we have
\begin{equation}\label{CCR2}
\begin{split}
&\overline Z_j\left(\int_{H_{n+1}}\int_0^\infty e^{it\varphi_-(x, y)}t^nu(y)\chi(\varepsilon t)dt d\mu_{H_{n+1}}(y)\right)\\
=&\int_{H_{n+1}}\left(\frac{\partial}{\partial\overline z_j}+i\lambda_j z_j\frac{\partial}{\partial x_{2n+1}}\right)\varphi_-(x, y)\int_0^\infty it e^{it\varphi_-(x, y)}t^nu(y)\chi(\varepsilon t)dt d\mu_{H_{n+1}}(y)\\
=&\int_{H_{n+1}}\left[i\abs{\lambda_j}(z_j-w_j)+i\lambda_jw_j-i\lambda_jz_j\right]
\int_0^\infty it e^{it\varphi_-(x, y)}t^nu(y)\chi(\varepsilon t)dt d\mu_{H_{n+1}}(y)\\
=&0.
\end{split}
\end{equation}
Thus, we get the conclusion of (\ref{CR1a}). Since $\lim_{\varepsilon\To0^+}I^{(0)}_{\varphi_-,\varepsilon}u=I^{(0)}_{\varphi_-}u$ in $C^\infty(H_{n+1})$ topology, we deduce that
$I^{(0)}_{\varphi_-}u\in\mathcal H^{0}_b(H_{n+1})$, for every $u\in C^\infty_0(H_{n+1})$.

Let $u\in L^2(H_{n+1})$ and take $u_j\in C^\infty_0(H_{n+1})$, $j=1,2,\ldots$, $u_j\To u$ in $L^2(H_{n+1})$ as $j\To\infty$.  From Theorem~\ref{tema2}, we see that
$I^{(0)}_{\varphi_-}u_j\To I^{(0)}_{\varphi_-}u$ $L^2(H_{n+1})$ as $j\To\infty$ and hence $\ddbar_b(I^{(0)}_{\varphi_-}u_j)\To\ddbar_b(I^{(0)}_{\varphi_-}u)$ in $D'(H_{n+1})$ as $j\To\infty$. Thus, $I^{(0)}_{\varphi_-}u\in{\rm Ker\,}\ddbar_b=\mathcal H^{0}_b(H_{n+1})$. The lemma follows.
\end{proof}

We need

\begin{lemma}\label{l-gue170602}
Let $g\in C^\infty_0(H_{n+1})$ and $u\in L^2(H_{n+1})$. Then,
\[\begin{split}
&(\Td S^{(0)}u\,|\,g\,)\\
&=\frac{\abs{\lambda_1}\cdots\abs{\lambda_n}}{2\pi^{n+1}}\int^\infty_0t^n\hat u(w,-t)\ol{\hat g(z,-t)}e^{-t\abs{\lambda}\abs{z-w}^2-t\lambda(\ol zw-z\ol w)}d\mu(z)d\mu(w)dt,\end{split}\]
where $\abs{\lambda}\abs{z-w}^2=\sum^n_{j=1}\abs{\lambda_j}\abs{z-w}^2$, $\lambda(\ol zw-z\ol w)=\sum^n_{j=1}\lambda_j(\ol z_jw_j-z_j\ol w_j)$.
\end{lemma}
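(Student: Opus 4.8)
The plan is to unwind the definition of $\Td S^{(0)}$ and to recognise the integrations in the last variable $x_{2n+1}$ as partial Fourier transforms. By \eqref{e-gue170528xr} and the identity $I^{(0)}_{\varphi_-}=\frac{1}{n!}\int^\infty_0 e^{it\varphi_-(x,y)}t^n\,dt$ recorded at the end of Section~\ref{s-gue170528}, for $u\in C^\infty_0(H_{n+1})$ we may write $\Td S^{(0)}u=\lim_{\varepsilon\To0^+}\Td S^{(0)}_\varepsilon u$ in the $C^\infty$ topology, where
\[
\Td S^{(0)}_\varepsilon u:=\frac{\abs{\lambda_1}\cdots\abs{\lambda_n}}{2\pi^{n+1}}\int_{H_{n+1}}\int^\infty_0 e^{it\varphi_-(x,y)}t^n\chi(\varepsilon t)u(y)\,dt\,d\mu_{H_{n+1}}(y).
\]
First I would fix $u,g\in C^\infty_0(H_{n+1})$ and $\varepsilon>0$, pair $\Td S^{(0)}_\varepsilon u$ against $g$ in $(\,\cdot\,|\,\cdot\,)$, and use the factorisation
\[
it\varphi_-(x,y)=-itx_{2n+1}+ity_{2n+1}-t\abs{\lambda}\abs{z-w}^2-t\lambda(\ol zw-z\ol w)
\]
coming from \eqref{phase1}, in which only the first two terms depend on $x_{2n+1},y_{2n+1}$.

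Because $u$ and $g$ have compact support and $\chi(\varepsilon t)$ confines $t$ to a bounded interval, the integrand is $L^1$ on the product space, so Fubini applies; performing the $y_{2n+1}$- and $x_{2n+1}$-integrations first and using the convention $\hat f(z,\eta)=\int e^{-ix_{2n+1}\eta}f(z,x_{2n+1})\,dx_{2n+1}$ one gets $\int e^{ity_{2n+1}}u(w,y_{2n+1})\,dy_{2n+1}=\hat u(w,-t)$ and $\int e^{-itx_{2n+1}}\ol{g(z,x_{2n+1})}\,dx_{2n+1}=\ol{\hat g(z,-t)}$, with no extraneous constant, so that
\[
(\Td S^{(0)}_\varepsilon u\,|\,g)=\frac{\abs{\lambda_1}\cdots\abs{\lambda_n}}{2\pi^{n+1}}\int\int\int^\infty_0\chi(\varepsilon t)\,t^n\hat u(w,-t)\ol{\hat g(z,-t)}\,e^{-t\abs{\lambda}\abs{z-w}^2-t\lambda(\ol zw-z\ol w)}\,d\mu(z)\,d\mu(w)\,dt.
\]
Letting $\varepsilon\To0^+$: on the left the pairing converges since $\Td S^{(0)}_\varepsilon u\To\Td S^{(0)}u$ in $C^\infty$ and $g\in C^\infty_0$; on the right I would apply dominated convergence, a majorant being $t^n\abs{\hat u(w,-t)}\,\abs{\hat g(z,-t)}\,e^{-t\abs{\lambda}\abs{z-w}^2}$ — note that $t\lambda(\ol zw-z\ol w)$ is purely imaginary, so the corresponding exponential has modulus $1$ — which is integrable over $\C^n\times\C^n\times(0,\infty)$ because $\hat u(\cdot,-t)$ and $\hat g(\cdot,-t)$ are compactly supported in $w$, $z$ and rapidly decreasing in $t$. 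This gives the asserted identity for $u,g\in C^\infty_0(H_{n+1})$.

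It remains to drop the smoothness of $u$. Given $u\in L^2(H_{n+1})$, choose $u_j\in C^\infty_0(H_{n+1})$ with $u_j\To u$ in $L^2$. By Corollary~\ref{c-gue170528}, $\Td S^{(0)}$ is $L^2$-bounded, so $(\Td S^{(0)}u_j\,|\,g)\To(\Td S^{(0)}u\,|\,g)$. For the right-hand side, \eqref{neg1} gives $\hat u_j\To\hat u$ in $L^2(\C^n\times\R,d\mu(w)\,d\eta)$, and the bilinear pairing on the right is controlled by a Cauchy--Schwarz/Schur estimate of the same kind used in the proof of Theorem~\ref{tema2}: integrating the Gaussian in $w$ via $\int_{\C^n}e^{-t\abs{\lambda}\abs{z-w}^2}\,d\mu(w)=(2\pi)^n(t^n\abs{\lambda_1}\cdots\abs{\lambda_n})^{-1}$, then applying Cauchy--Schwarz in $t$ and integrating over the ($g$-dependent) compact $z$-support, yields a bound $C(g)\norm{u}$. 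Hence the right-hand side passes to the limit and equals the expression with $\hat u$, proving the lemma. I expect no genuinely hard step: the content is a direct computation, and the only care needed is the integrability bookkeeping for Fubini, for the $\varepsilon\To0^+$ dominated convergence, and for the $L^2$-continuity of the right-hand form — all mild, since $g\in C^\infty_0$ keeps the relevant $z$- and $w$-regions compact while the Gaussian together with the decay of the partial Fourier transforms tames the $t$-integral.
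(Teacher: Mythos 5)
Your proposal is correct and follows essentially the same route as the paper: compute $(\Td S^{(0)}_\varepsilon u\,|\,g)$ for smooth compactly supported data by recognizing the $y_{2n+1}$- and $x_{2n+1}$-integrations as partial Fourier transforms (the paper's identity \eqref{e-gue170527cr}), remove the cutoff $\chi(\varepsilon t)$, and then pass to general $u\in L^2$ by density using the $L^2$-boundedness from Theorem~\ref{tema2} on the left and a Cauchy--Schwarz estimate against the Gaussian factor (the paper's \eqref{e-gue170602II}) on the right. The only cosmetic difference is the order in which the two limits ($\varepsilon\To0^+$ and $u_j\To u$) are organized, which does not change the substance.
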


\begin{proof}
Let $u_j\in C^\infty_0(H_{n+1})$, $j=1,2,\ldots$, with $\lim_{j\To\infty}u_j\To u$ in $L^2(H_{n+1})$ as $j\To\infty$. We have
\begin{equation}\label{e-gue170602}
\lim_{j\To\infty}(\Td S^{(0)}u_j\,|\,g\,)=(\Td S^{(0)}u\,|\,g\,).
\end{equation}
Let $\chi\in C^\infty_0(\Real)$ be as in the beginning of Section~\ref{s-gue170528}. We have
\begin{equation}\label{e-gue170602I}
\begin{split}
(\Td S^{(0)}u_j\,|\,g\,)&=\lim_{\varepsilon\To0^+}c_0\int e^{it(-x_{2n+1}+y_{2n+1}+\hat\varphi(z,w))}t^n\chi(\varepsilon t)u_j(y)\ol g(x)d\mu_{H_{n+1}}(y)d\mu_{H_{n+1}}(x)dt\\
&=\lim_{\varepsilon\To0^+}c_0\int^\infty_0t^n\chi(\varepsilon t)\hat u_j(w,-t)\ol{\hat g(z,-t)}e^{it\hat\varphi(z,w)}d\mu(z)d\mu(w)dt\\
&=c_0\int^\infty_0t^n\hat u_j(w,-t)\ol{\hat g(z,-t)}e^{it\hat\varphi(z,w)}d\mu(z)d\mu(w)dt,
\end{split}
\end{equation}
where $c_0=\frac{\abs{\lambda_1}\cdots\abs{\lambda_n}}{2\pi^{n+1}}$ and $\hat\varphi(z,w)=i\abs{\lambda}\abs{z-w}^2+i\lambda(\ol zw-z\ol w)$. We have
\begin{equation}\label{e-gue170602II}
\begin{split}
&\abs{\int^\infty_0t^n\Bigr(\hat u_j(w,-t)-\hat u(w,-t)\Bigr)\ol{\hat g(z,-t)}e^{it\hat\varphi(z,w)}d\mu(z)d\mu(w)dt}\\
&\leq \int^\infty_0t^n\abs{\hat u_j(w,-t)-\hat u(w,-t)}\abs{\ol{\hat g(z,-t)}e^{it\hat\varphi(z,w)}}d\mu(z)d\mu(w)dt\\
&\leq\int\Bigr(\int\abs{\hat u_j(w,-t)-\hat u(w,-t)}^2d\mu(w)dt\Bigr)^{\frac{1}{2}}\Bigr(\int \abs{t^n\ol{\hat g(z,-t)}e^{it\hat\varphi(z,w)}}^2d\mu(w)dt\Bigr)^{\frac{1}{2}}d\mu(z)\\
&\To 0\ \ \mbox{as $j\To\infty$}.
\end{split}
\end{equation}

From \eqref{e-gue170602II}, \eqref{e-gue170602I} and \eqref{e-gue170602}, the lemma follows.
\end{proof}

 We need

 \begin{lemma}\label{l-gue170603}
 Fix $t>0$. Let $g(z)\in C^\infty(\Complex^n)$ be any holomorphic function with $\int\abs{g(z)}^2e^{-2t\abs{\lambda}\abs{z}^2}d\mu(z)<+\infty$, where $\abs{\lambda}\abs{z}^2=\sum^n_{j=1}\abs{\lambda_j}\abs{z_j}^2$. Then,
 \[e^{-t\abs{\lambda}\abs{z}}g(z)=\frac{\abs{\lambda_1}\cdots\abs{\lambda_n}}{\pi^{n}}\int_{\mathbb C^n}t^ne^{-t\abs{\lambda}\abs{z-w}^2-t\lambda(\ol zw-z\ol w)}e^{-t\abs{\lambda}\abs{w}^2}g(w)d\mu(w),\]
 where $\lambda(\ol zw-z\ol w)=\sum^n_{j=1}\lambda_j(\ol z_jw_j-z_j\ol w_j)$.
 \end{lemma}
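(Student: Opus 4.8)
The plan is to identify the claimed identity with the reproducing formula for a weighted Bargmann--Fock space on $\C^n$. First I would multiply both sides by $e^{t\abs{\lambda}\abs{z}^2}$ (reading the left-hand side as $e^{-t\abs{\lambda}\abs{z}^2}g(z)$) and expand the Gaussian exponent: since $\abs{z_j-w_j}^2=\abs{z_j}^2-z_j\ol w_j-\ol z_jw_j+\abs{w_j}^2$ and $\lambda_j=\abs{\lambda_j}>0$ for all $j$ in the setting of Section~\ref{s-gue170528e}, a direct computation cancels every $\abs{z_j}^2$-term and reduces the statement to
\[g(z)=\prod_{j=1}^n\frac{2t\abs{\lambda_j}}{\pi}\int_{\C^n}e^{\,2t\sum_{j=1}^n\abs{\lambda_j}z_j\ol w_j}\,g(w)\,e^{-2t\sum_{j=1}^n\abs{\lambda_j}\abs{w_j}^2}\,dV(w),\]
where $dV$ is Lebesgue measure on $\C^n$ and we used $d\mu(w)=2^n\,dV(w)$. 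Writing $a_j:=2t\abs{\lambda_j}>0$ and $d\nu_a(w):=\bigl(\prod_{j=1}^n\tfrac{a_j}{\pi}e^{-a_j\abs{w_j}^2}\bigr)dV(w)$, the right-hand side equals $\int_{\C^n}K_a(z,w)\,g(w)\,d\nu_a(w)$, where, in multi-index notation, $K_a(z,w):=e^{\sum_j a_jz_j\ol w_j}=\sum_{\beta\in\N_0^n}\tfrac{a^\beta}{\beta!}z^\beta\ol w^\beta$.

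Next I would record the elementary Gaussian moments (polar coordinates in each variable separately): the monomials $w^\alpha$, $\alpha\in\N_0^n$, are pairwise orthogonal in $L^2(d\nu_a)$ with $\int_{\C^n}\abs{w^\alpha}^2\,d\nu_a(w)=\alpha!/a^\alpha$. The hypothesis $\int\abs{g(z)}^2e^{-2t\abs{\lambda}\abs{z}^2}d\mu(z)<\infty$ says, up to the positive factor $\tfrac{2^n\pi^n}{\prod_j a_j}$, exactly that $g$ belongs to the weighted space $L^2(d\nu_a)$; since $g$ is holomorphic, standard Fock-space theory then gives that its Taylor series $g(w)=\sum_\alpha c_\alpha w^\alpha$ converges to $g$ in $L^2(d\nu_a)$ (and locally uniformly), with $\sum_\alpha\abs{c_\alpha}^2\alpha!/a^\alpha<\infty$. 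Inserting this series together with the series for $K_a$ into the integral and using the orthogonality relations termwise gives $\int_{\C^n}K_a(z,w)\,w^\alpha\,d\nu_a(w)=z^\alpha$ for every $\alpha$, hence $\int_{\C^n}K_a(z,w)\,g(w)\,d\nu_a(w)=\sum_\alpha c_\alpha z^\alpha=g(z)$. Undoing the factor $e^{t\abs{\lambda}\abs{z}^2}$ then yields the lemma.

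The only delicate point — and the step I expect to be the main, though routine, obstacle — is justifying the term-by-term evaluation of the integral. I would handle it by noting that for fixed $z$ the function $w\mapsto K_a(z,w)$ lies in $L^2(d\nu_a)$, with $\norm{K_a(z,\cdot)}_{\nu_a}^2=K_a(z,z)=e^{\sum_j a_j\abs{z_j}^2}<\infty$, so that $f\mapsto\int_{\C^n}K_a(z,w)f(w)\,d\nu_a(w)$ is a bounded linear functional on $L^2(d\nu_a)$; applying it to the partial sums $\sum_{\abs{\alpha}\leq N}c_\alpha w^\alpha$ of the Taylor series of $g$, which converge to $g$ in $L^2(d\nu_a)$, legitimises passing to the limit. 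Equivalently, $K_a$ is the reproducing kernel of the Fock space $\set{f\ \text{holomorphic on}\ \C^n:\ \int_{\C^n}\abs{f}^2\,d\nu_a<\infty}$, which admits $\set{w^\alpha}_{\alpha\in\N_0^n}$ as an orthogonal basis, and the asserted integral identity is precisely its reproducing property applied to $f=g$.
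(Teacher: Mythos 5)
Your proof is correct, but it follows a genuinely different route from the paper's. After the same algebraic reduction (completing the square in the exponent, using $\lambda_j=\abs{\lambda_j}>0$), the paper does not expand $g$ at all: it writes the integrand as $t^ne^{-2t\abs{\lambda}\abs{z-w}^2}h(w)$ with $h$ holomorphic and invokes the Gaussian mean-value property, $\int t^ne^{-2t\abs{\lambda}\abs{z-w}^2}h(w)\,d\mu(w)=h(z)\cdot\pi^n/(\abs{\lambda_1}\cdots\abs{\lambda_n})$, which it attributes to the Cauchy integral formula (average over circles centred at $z$, then integrate radially). You instead prove the reproducing property of the Fock kernel $K_a(z,w)=e^{\sum_j a_jz_j\ol w_j}$ from scratch: orthogonality of monomials in $L^2(d\nu_a)$, the Gaussian moments $\alpha!/a^\alpha$, $L^2$-convergence of the Taylor series of $g$, and boundedness of $f\mapsto\int K_a(z,\cdot)f\,d\nu_a$ to justify the term-by-term evaluation. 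Your version is longer but makes the convergence and interchange-of-limits issues fully explicit, which the paper's one-line appeal to the mean-value property leaves implicit (in particular, the paper does not verify that its auxiliary function $h$ satisfies the integrability hypothesis under which the mean-value identity is applied). The paper's version is shorter and avoids any series expansion. You also correctly read the left-hand side of the statement as $e^{-t\abs{\lambda}\abs{z}^2}g(z)$, silently fixing the typographical omission of the square in the exponent.
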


 \begin{proof}
 We have
 \begin{equation}\label{e-gue170603c}
 \begin{split}
 &\int_{\mathbb C^n}t^ne^{-t\abs{\lambda}\abs{z-w}^2-t\lambda(\ol zw-z\ol w)}e^{-t\abs{\lambda}\abs{w}^2}g(w)d\mu(w)\\
 &=\int_{\mathbb C^n}t^ne^{-2t\abs{\lambda}\abs{z-w}^2}\Bigr(e^{-2t\abs{\lambda}\ol zw+t\abs{\lambda}\abs{z}^2}g(w)\Bigr)d\mu(w).
 \end{split}
 \end{equation}
 From Cauchy integral formula, it is easy to see that
  \begin{equation}\label{e-gue170603cI}
 \begin{split}
 \int t^ne^{-2t\abs{\lambda}\abs{z-w}^2}h(w)d\mu(w)&=h(z)\int t^ne^{-2t\abs{\lambda}\abs{z}^2}d\mu(z)\\
 &=h(z)\frac{\pi^n}{\abs{\lambda_1}\cdots\abs{\lambda_n}},
 \end{split}
 \end{equation}
 for every holomorphic function $h(z)\in C^\infty(\Complex^n)$ with $\int\abs{h(z)}^2e^{-2t\abs{\lambda}\abs{z}^2}d\mu(z)<+\infty$. From \eqref{e-gue170603cI} and \eqref{e-gue170603c}, the lemma follows.
 \end{proof}

 We need

 \begin{lemma}\label{lemma CR}
 Let $u\in \mathcal H^0_b(H_{n+1})$, then $\hat u(z, -t)=0$ for a.e. $t\in (-\infty, 0)$.
\end{lemma}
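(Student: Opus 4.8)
The plan is to use the partial Fourier transform $\hat u(z,\eta)$ and the fact, established in Lemma~\ref{fo1}, that for $u = \sideset{}{'}\sum_{l(J)=0}u_J d\ol z^J = u_0(z) \in \mathcal H^0_b(H_{n+1})$ the transform satisfies, for a.e.\ $\eta$,
\[
\left(\frac{\partial}{\partial \ol z_j} - \lambda_j z_j \eta\right)\hat u_0(z,\eta) = 0, \quad j=1,\ldots,n.
\]
As in the proof of Theorem~\ref{thm:1.1}, this means that for a.e.\ fixed $\eta$ the function $z \mapsto e^{\eta \Td\lambda|z|^2}\hat u_0(z,\eta)$ with $\Td\lambda|z|^2 = -\sum_j \lambda_j|z_j|^2$ (the $q=0$ case) is holomorphic in $z$, and from \eqref{neg1} together with Fubini we have for a.e.\ $\eta$
\[
\int_{\C^n}|\hat u_0(z,\eta)|^2 d\mu(z) = \int_{\C^n}\bigl|e^{\eta\Td\lambda|z|^2}\hat u_0(z,\eta)\bigr|^2 e^{2\eta\sum_j\lambda_j|z_j|^2}\,d\mu(z) < \infty .
\]
First I would observe that since $\lambda_j > 0$ for all $j$ (we are in the positive case of this section), when $\eta < 0$ the weight $e^{2\eta\sum_j\lambda_j|z_j|^2}$ decays at infinity, so the usual Gauss–Bargmann computation goes through; but when I instead look at $t = -\eta > 0$, the claim to prove is that $\hat u_0(z,-t) = 0$. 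Wait---here the sign works the other way: for $t>0$, i.e. $\eta = -t < 0$, write $F(z) := e^{-t\sum_j \lambda_j |z_j|^2}\hat u_0(z,-t)$, wait, we need $e^{\eta \Td\lambda|z|^2} = e^{(-t)(-\sum_j\lambda_j|z_j|^2)} = e^{t\sum_j\lambda_j|z_j|^2}$, so $F(z) := e^{t\sum_j\lambda_j|z_j|^2}\hat u_0(z,-t)$ is holomorphic and
\[
\int_{\C^n}|F(z)|^2 e^{-2t\sum_j\lambda_j|z_j|^2}\,d\mu(z) = \int_{\C^n}|\hat u_0(z,-t)|^2\,d\mu(z) < \infty
\]
for a.e.\ $t>0$.

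Next, expanding $F(z) = \sum_{\alpha \in \N_0^n} F_\alpha z^\alpha$ and using orthogonality of monomials, each coefficient satisfies $|F_\alpha|^2 \int_{\C^n}|z^\alpha|^2 e^{-2t\sum_j \lambda_j|z_j|^2} d\mu(z) < \infty$. But with all $\lambda_j > 0$ and $t>0$ this Gaussian integral is \emph{finite}, so this gives no contradiction and does not force $F_\alpha = 0$. So the direct "every monomial integral diverges" argument from Theorem~\ref{thm:1.1} is not available here; instead the vanishing must come from a global $L^2$ estimate in $\eta$. The correct route is to go back to the second branch of the argument in the proof of Lemma~\ref{fo1}, or rather to combine \eqref{04-24-2} with the representation of $\hat u_0$. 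Concretely, I expect the key to be this: from Lemma~\ref{l-gue170528xr}-type reasoning (or directly from $\ddbar_b u = 0$ and $\ol\partial_b^* u = 0$) one derives that $\hat u_0(\cdot,\eta)$ lies in a weighted Bergman space only for $\eta$ in the half-line where the weight is integrable; for the complementary half-line the only such element is $0$. Precisely, for $q=0$ the relevant weight $e^{2\eta\sum_j\lambda_j|z_j|^2}$ (appearing when we write the $L^2$ norm in terms of the holomorphic function $e^{\eta\Td\lambda|z|^2}\hat u_0$) is integrable against $|{\rm holomorphic}|^2$ only when $\eta > 0$; for $\eta \le 0$, i.e.\ $t = -\eta \ge 0$, the space of holomorphic functions $F$ with $\int |F|^2 e^{2\eta\sum_j\lambda_j|z_j|^2} d\mu < \infty$... again this is the finite Gaussian case. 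I realize I must be careful with signs, so let me restate the structure cleanly: the plan is (i) apply Lemma~\ref{fo1} with $q=0$ to get the holomorphicity of $G_\eta(z) := e^{\eta\Td\lambda|z|^2}\hat u_0(z,\eta)$ for a.e.\ $\eta$; (ii) note $\|u_0\|^2 = \frac{1}{2\pi}\int_\R \int_{\C^n}|G_\eta(z)|^2 e^{-2\eta\Td\lambda|z|^2}\,d\mu(z)\,d\eta < \infty$ with $-2\eta\Td\lambda|z|^2 = 2\eta\sum_j\lambda_j|z_j|^2$; (iii) for $\eta < 0$ (equivalently $t = -\eta > 0$) the inner weight $e^{2\eta\sum_j\lambda_j|z_j|^2} = e^{-2t\sum_j\lambda_j|z_j|^2}$ decays, so this is the Bargmann–Fock situation where the pointwise reproducing formula of Lemma~\ref{l-gue170603} applies; (iv) however, I claim that in fact $\hat u_0(z,-t)$ for $t>0$ is reproduced against itself via the heat-type kernel in a way that, combined with $\ol\partial_b^* u = 0$, forces it to vanish.

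The cleanest honest approach, and the one I would actually write, is: use that $u \in \mathcal H^0_b$ implies $\ol\partial_b^* u = 0$, and translate this into a condition on $\hat u_0$. Since $\ol\partial_b^* u = 0$ means $(u \mid \ddbar_b v) = 0$ for all $v \in \Omega^{0,0}_0$... no, $v$ would be a $(0,-1)$-form, which is trivial, so $\ol\partial_b^*$ on functions is automatic. The real content is just $\ddbar_b u = 0$, giving Lemma~\ref{fo1}, plus the $L^2$ membership. So the vanishing of $\hat u(z,-t)$ for a.e.\ $t < 0$ --- note the statement says $t \in (-\infty, 0)$, i.e.\ $\eta = -t > 0$ --- must be exactly the branch where the weight $e^{2\eta\sum_j\lambda_j|z_j|^2}$ (with $\eta>0$) \emph{grows} at infinity, so the only holomorphic $G_\eta$ with $\int|G_\eta|^2 e^{2\eta\sum_j\lambda_j|z_j|^2}d\mu < \infty$ --- wait, that weight grows, making the integral \emph{more} restrictive, not less. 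Hmm: a holomorphic function with $\int|G|^2 e^{+c|z|^2}d\mu < \infty$ for $c>0$ is forced to be $0$? No --- e.g.\ for this to be finite $G$ must decay, and a nonzero entire function can't decay like a Gaussian; expanding in monomials, $|G_\alpha|^2 \int |z^\alpha|^2 e^{c|z|^2}d\mu = \infty$ for every $\alpha$ since that integral diverges. \textbf{That} is the mechanism. So: I would apply Lemma~\ref{fo1}, form $G_\eta(z) = e^{\eta\Td\lambda|z|^2}\hat u_0(z,\eta)$ holomorphic for a.e.\ $\eta$, and observe that for $\eta > 0$ (i.e.\ $-t = \eta$, $t<0$), since $\Td\lambda|z|^2 = -\sum_j\lambda_j|z_j|^2 < 0$ when not all $z_j = 0$, the identity $\int_{\C^n}|G_\eta(z)|^2 e^{-2\eta\Td\lambda|z|^2}d\mu(z) = \int_{\C^n}|\hat u_0(z,\eta)|^2 d\mu(z) < \infty$ for a.e.\ such $\eta$ has $e^{-2\eta\Td\lambda|z|^2} = e^{2\eta\sum_j\lambda_j|z_j|^2} \to \infty$; expanding $G_\eta$ in a monomial series and using $\int_{\C^n}|z^\alpha|^2 e^{2\eta\sum_j\lambda_j|z_j|^2}d\mu(z) = \infty$ for every $\alpha$, each coefficient vanishes, hence $G_\eta \equiv 0$, hence $\hat u_0(z,\eta) = 0$ for a.e.\ $\eta > 0$, which is exactly $\hat u(z,-t) = 0$ for a.e.\ $t < 0$. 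The main obstacle is simply getting the sign of $\Td\lambda|z|^2$ in the $q = 0$ case and the direction of the inequality straight --- everything else is the same bookkeeping as in the proof of Theorem~\ref{thm:1.1}.
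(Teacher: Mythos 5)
Your final argument is correct and is essentially the paper's own proof: apply Lemma~\ref{fo1} with $q=0$ to see that $e^{\eta\Td\lambda|z|^2}\hat u(z,\eta)$ is entire for a.e.\ $\eta$, use Parseval and Fubini to get the weighted $L^2$ finiteness for a.e.\ $\eta$, and then, for $\eta=-t>0$, expand in monomials and use the divergence of $\int_{\C^n}|z^\alpha|^2e^{2\eta\sum_j\lambda_j|z_j|^2}d\mu(z)$ to force every Taylor coefficient, hence $\hat u(\cdot,\eta)$, to vanish. The only difference is notational (the paper works directly with $e^{t\lambda|z|^2}\hat u(z,-t)$ for $t<0$), and your closing observation that the vanishing comes from the \emph{growing} Gaussian weight, not from any reproducing-kernel identity, is exactly the mechanism the paper uses.
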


\begin{proof}
Suppose $u\in\mathcal H^0_b(H_{n+1}).$ From \eqref{f1}, we see that
\begin{equation}
\frac{\partial}{\partial\overline z_j}\left(\hat u(z, -t)e^{t\lambda|z|^2}\right)=0, \ \ \mbox{for a.e. $t\in\mathbb R$}.
\end{equation}
Thus, $\hat u(z, -t)e^{t\lambda|z|^2}$ is a holomorphic function on $\mathbb C^n$, for a.e. $t\in\mathbb R$. From Parseval's formula, we can check that
\begin{equation}
\int_{\mathbb R}\int_{\mathbb C^n}|\hat u(z, -t)e^{t\lambda|z|^2}|^2 e^{-2t\lambda|z|^2}d\mu(z)dt=(2\pi)\int_{H_{n+1}}|u(z, x_{2n+1})|^2d\mu(z)dx_{2n+1}<\infty.
\end{equation}
It follows that
\begin{equation}
\int_{\mathbb C^n}|\hat u(z, -t)e^{t\lambda|z|^2}|^2e^{-2t\lambda|z|^2}d\mu(z)<\infty,\ \ \mbox{for a.e. $t\in\mathbb R$}.
\end{equation}
Fix $t_0\in (-\infty,0)$ so that $\int_{\mathbb C^n}|\hat u(z, -t_0)e^{t_0\lambda|z|^2}|^2e^{-2t_0\lambda|z|^2}d\mu(z)<\infty$ and $\hat u(z, -t_0)e^{t_0\lambda|z|^2}$ is a holomorphic function on $\mathbb C^n$. We write $\hat u(z,-t_0)e^{t_0\lambda\abs{z}^2}=\sum_{\alpha\in\mathbb N^n_0}c_\alpha(t_0)z^\alpha$.
	 Fix a $\alpha_0\in\mathbb N^n_0$. It is easy to see that
	\[|c_{\alpha_0}(t_0)|^2\int\abs{z^{\alpha_0}}^2e^{-2t_0\lambda\abs{z}^2}\leq \int_{\C^n}|\hat u(z,-t_0)e^{t_0\lambda\abs{z}^2}|^2e^{-2t_0\lambda|z|^2}d\mu(z)<\infty.\]
	In view of Lemma \ref{lem:monomialintegrals}, we see that $\int\abs{z^{\alpha_0}}^2e^{-2t_0\lambda\abs{z}^2}=\infty$ and hence $c_{\alpha_0}(t_0)=0$ and thus
	$\hat u(z,-t_0)=0$. The lemma follows.
\end{proof}

Now, we can prove

\begin{theorem}\label{t-gue170603}
 Let $u\in \mathcal H^0_b(H_{n+1})$. Then $\Td S^{(0)}u=u$.
\end{theorem}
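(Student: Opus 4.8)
The plan is to verify that $(\tilde S^{(0)}u\,|\,g)=(u\,|\,g)$ for every $g\in C^\infty_0(H_{n+1})$; since $C^\infty_0(H_{n+1})$ is dense in $L^2(H_{n+1})$, this forces $\tilde S^{(0)}u=u$. The starting point is Lemma~\ref{l-gue170602}, which gives
\[(\tilde S^{(0)}u\,|\,g)=\frac{\abs{\lambda_1}\cdots\abs{\lambda_n}}{2\pi^{n+1}}\int^\infty_0 t^n\hat u(w,-t)\ol{\hat g(z,-t)}\,e^{-t\abs{\lambda}\abs{z-w}^2-t\lambda(\ol z w-z\ol w)}\,d\mu(z)\,d\mu(w)\,dt,\]
so everything reduces to computing the inner $w$-integral for fixed $t>0$ and $z$.

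First I would observe that, since $u\in\mathcal H^0_b(H_{n+1})$, Lemma~\ref{fo1} together with the Parseval computation used in the proof of Lemma~\ref{lemma CR} shows that for almost every $t>0$ the function $w\mapsto\hat u(w,-t)e^{t\lambda\abs{w}^2}$ is holomorphic on $\C^n$ and satisfies $\int_{\C^n}\abs{\hat u(w,-t)e^{t\lambda\abs{w}^2}}^2e^{-2t\lambda\abs{w}^2}d\mu(w)<\infty$ (recall that $\lambda\abs{w}^2=\abs{\lambda}\abs{w}^2$ here, as all $\lambda_j>0$). Applying Lemma~\ref{l-gue170603} to this function and simplifying yields the reproducing identity
\[\hat u(z,-t)=\frac{\abs{\lambda_1}\cdots\abs{\lambda_n}}{\pi^n}\int_{\C^n}t^n e^{-t\abs{\lambda}\abs{z-w}^2-t\lambda(\ol z w-z\ol w)}\hat u(w,-t)\,d\mu(w)\]
for a.e.\ $t>0$ and all $z\in\C^n$. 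Substituting this into the formula above and performing the $w$-integration first --- which is permissible by Fubini's theorem, using the Gaussian decay of the kernel and the $L^2$ bounds behind \eqref{neg1} for $u$ and $g$, exactly as in the proof of Lemma~\ref{l-gue170602} --- the constants cancel and I obtain
\[(\tilde S^{(0)}u\,|\,g)=\frac{1}{2\pi}\int^\infty_0\int_{\C^n}\hat u(z,-t)\ol{\hat g(z,-t)}\,d\mu(z)\,dt.\]

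Next I would invoke Lemma~\ref{lemma CR}: since $\hat u(z,-t)=0$ for a.e.\ $t\in(-\infty,0)$, the integral over $(0,\infty)$ coincides with the integral over all of $\R$, so
\[(\tilde S^{(0)}u\,|\,g)=\frac{1}{2\pi}\int_{\R}\int_{\C^n}\hat u(z,-t)\ol{\hat g(z,-t)}\,d\mu(z)\,dt.\]
Finally, Parseval's formula in the $x_{2n+1}$-variable (the identity underlying \eqref{neg1} and \eqref{e-gue170527cr}, applied after the substitution $\eta=-t$) identifies the right-hand side with $\int_{H_{n+1}}u(z,x_{2n+1})\ol{g(z,x_{2n+1})}\,d\mu(z)\,dx_{2n+1}=(u\,|\,g)$, and the proof is complete.

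The conceptual content is simply that the Gaussian $t^n e^{-t\abs{\lambda}\abs{z-w}^2-t\lambda(\ol z w-z\ol w)}$ is, up to the normalizing constant, the reproducing kernel of the Bargmann-type space in which $\hat u(\cdot,-t)$ lives, so that $\tilde S^{(0)}$ acts as the identity on $\mathcal H^0_b(H_{n+1})$. The only real work is the technical bookkeeping --- verifying that Lemma~\ref{l-gue170603} is applicable for almost every $t$ and that the triple integral is absolutely convergent so that Fubini applies --- which I expect to be the main, though essentially routine, obstacle.
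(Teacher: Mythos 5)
Your argument is correct and follows essentially the same route as the paper's own proof: both reduce the claim to testing against $g\in C^\infty_0(H_{n+1})$, invoke Lemma~\ref{l-gue170602}, establish via Lemma~\ref{fo1} and Parseval that $\hat u(\cdot,-t)e^{t\lambda|\cdot|^2}$ lies in the weighted Bergman space for a.e.\ $t$, apply the reproducing identity of Lemma~\ref{l-gue170603}, and finish with Lemma~\ref{lemma CR} and Parseval's formula. No discrepancies to report.
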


\begin{proof}
Fix $g\in C^\infty_0(H_{n+1})$. We only need to prove that
\begin{equation}\label{e-gue170603cr}
(\,\Td S^{(0)}u\,|\,g\,)=(\,u\,|\,g\,).
\end{equation}
From Lemma~\ref{l-gue170602}, we have
\begin{equation}\label{e-gue170603crI}
\begin{split}
&(\Td S^{(0)}u\,|\,g\,)\\
&=\frac{\abs{\lambda_1}\cdots\abs{\lambda_n}}{2\pi^{n+1}}\int^\infty_0t^n\hat u(w,-t)\ol{\hat g(z,-t)}e^{-t\abs{\lambda}\abs{z-w}^2-t\lambda(\ol zw-z\ol w)}d\mu(z)d\mu(w)dt.\end{split}\end{equation}
From Fubini's theorem, we have
\begin{equation}\label{e-gue170603crII}
\begin{split}
&\int^\infty_0t^n\hat u(w,-t)\ol{\hat g(z,-t)}e^{-t\abs{\lambda}\abs{z-w}^2-t\lambda(\ol zw-z\ol w)}d\mu(z)d\mu(w)dt\\
&=\int^\infty_0\Bigr(\int t^n\hat u(w,-t)\ol{\hat g(z,-t)}e^{-t\abs{\lambda}\abs{z-w}^2-t\lambda(\ol zw-z\ol w)}d\mu(z)d\mu(w)\Bigr)dt\\
&=\int^\infty_0\Bigr(\int\Bigr(\int t^n\hat u(w,-t)\ol{\hat g(z,-t)}e^{-t\abs{\lambda}\abs{z-w}^2-t\lambda(\ol zw-z\ol w)}d\mu(w)\Bigr)d\mu(z)\Bigr)dt.
\end{split}
\end{equation}
From \eqref{f1} and Fubini's theorem, we see that there is a measure zero set $B\subset\Real$ such that for all $t\notin B$,  $\hat u(z, -t)e^{t\lambda|z|^2}$ is a holomorphic function on $\Complex^n$ and $\int_{\mathbb C^n}|\hat u(z, -t)e^{t\lambda|z|^2}|^2e^{-2t\lambda|z|^2}d\mu(z)<\infty$. From this observation and Lemma~\ref{l-gue170603}, we deduce that
 \begin{equation}\label{e-gue170604}
 \hat u(z,-t)=\frac{\abs{\lambda_1}\cdots\abs{\lambda_n}}{\pi^{n}}\int^\infty_0t^ne^{-t\abs{\lambda}\abs{z-w}^2-t\lambda(\ol zw-z\ol w)}\hat u(w,-t)d\mu(w),
 \end{equation}
 for every $t\notin B$. From \eqref{e-gue170604}, \eqref{e-gue170603crII}, Lemma~\ref{lemma CR} and Parseval’s formula, we get
 \[(\Td S^{(0)}u\,|\,g\,)=\frac{1}{2\pi}\int\hat u(z,-t)\ol{\hat g(z,-t)}d\mu(z)dt=(\,u\,|\,g\,).\]
 The theorem follows.
\end{proof}

\begin{proof}[Proof of Theorem \ref{main theorem 2} ]
Let $u\in L^2(H_{n+1})$. From Lemma~\ref{l-gue170528xr}, we see that $\Td S^{(0)}u\in\mathcal H^0_b(H_{n+1})$. To show that $\Td S^{(0)}=S^{(0)}$, we only need to show that $(I-\Td S^{(0)})u\perp\mathcal H^0_b(H_{n+1})$. We observe that $\Td S^{(0)}$ is self-adjoint, that is,
\begin{equation}\label{e-gue170604w}
(\,\Td S^{(0)}g\,|\,h\,)=(\,g\,|\,\Td S^{(0)}h\,),\ \ \forall g, h\in L^2(H_{n+1}).
\end{equation}
Let $f\in\mathcal H^0_b(H_{n+1})$. From Theorem~\ref{t-gue170603} and \eqref{e-gue170604w}, we have
\[(\,(I-\Td S^{(0)})u\,|\,f\,)=(\,u\,|\,f\,)-(\,\Td S^{(0)}u\,|\,f\,)=(\,u\,|\,f\,)-(\,u\,|\,\Td S^{(0)}f\,)=(\,u\,|\,f\,)-(\,u\,|\,f\,)=0.\]
The theorem follows.
\end{proof}

\section{Proof of Theorem \ref{main theorem 3} }\label{s-gue170604}

In this Section, we will prove Theorem~\ref{main theorem 3}. We only prove the case when $q=n_{-}=n_+$. For the cases $q=n_-$, $n_-\neq n_+$ and $q=n_{+}, n_-\neq n_{-}$, the arguments are similar and simpler and therefore we omit the details.

Suppose that $\lambda_1<0,\ldots,\lambda_{n_-}<0$, $\lambda_{n_-+1}>0,\ldots,\lambda_n>0$. Let $q\in\set{n_-,n_+}$, where $n_+=n-n_-$.
Let $J_{n_-}=(1,\ldots,q)$, $J_{n_+}=(q+1,\ldots,n)$. We first need

\begin{lemma}\label{lemma vanish}
Let $u=\sideset{}{'}\sum_{l(J)=q}u_J d\overline z_J\in \mathcal H^{q}_b(H_{n+1})$. If $J\notin\set{J_{n_-}, J_{n_+}}$, then $u_J=0.$
\end{lemma}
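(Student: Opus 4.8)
The plan is to mimic the proof of Theorem~\ref{thm:1.1} in this setting, the only genuinely new ingredient being a divergence statement for certain ``mixed'' Gaussian integrals. Let $u=\sideset{}{'}\sum_{l(J)=q}u_Jd\ol z^J\in\mathcal H^q_b(H_{n+1})$. By Lemma~\ref{fo1}, for every strictly increasing $J$ with $l(J)=q$ the partial Fourier transform $\hat u_J(z,\eta)$ satisfies, for a.e.\ $\eta\in\Real$,
\[
\begin{cases}
\bigl(\tfrac{\partial}{\partial z_j}+\lambda_j\ol z_j\eta\bigr)\hat u_J(z,\eta)=0, & j\in J,\\[2pt]
\bigl(\tfrac{\partial}{\partial\ol z_j}-\lambda_j z_j\eta\bigr)\hat u_J(z,\eta)=0, & j\notin J.
\end{cases}
\]
Fix such a $J$ with $J\notin\set{J_{n_-},J_{n_+}}$ and set $\Td\lambda_J\abs{z}^2:=\sum_{j\in J}\lambda_j\abs{z_j}^2-\sum_{j\notin J}\lambda_j\abs{z_j}^2$. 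Multiplying these equations by $e^{\eta\lambda_j\abs{z_j}^2}$ for $j\in J$ and by $e^{-\eta\lambda_j\abs{z_j}^2}$ for $j\notin J$, one finds that $e^{\eta\Td\lambda_J\abs{z}^2}\hat u_J(z,\eta)$ satisfies $\tfrac{\partial}{\partial z_j}(\,\cdot\,)=0$ for $j\in J$ and $\tfrac{\partial}{\partial\ol z_j}(\,\cdot\,)=0$ for $j\notin J$ (for a.e.\ $\eta$). Hence, conjugating the variables indexed by $J$, the function
\[
F_J(z,\eta):=e^{\eta\Td\lambda_J\abs{z}^2}\,\hat u_J\bigl((\ol z_j)_{j\in J},(z_j)_{j\notin J},\eta\bigr)
\]
is holomorphic in $z$ for a.e.\ $\eta$, and $\int_{H_{n+1}}\abs{F_J(z,\eta)}^2e^{-2\eta\Td\lambda_J\abs{z}^2}d\mu(z)d\eta=\int_{H_{n+1}}\abs{\hat u_J(z,\eta)}^2d\mu(z)d\eta<\infty$, exactly as in the proof of Theorem~\ref{thm:1.1}.

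Next I would expand $F_J(z,\eta)=\sum_{\alpha\in\N^n_0}F_{J,\alpha}(\eta)z^\alpha$ for a.e.\ $\eta$ and deduce, as there, that $\abs{F_{J,\alpha}(\eta)}^2I_J(\alpha,\eta)<\infty$ for all $\alpha$ and a.e.\ $\eta$, where $I_J(\alpha,\eta):=\int_{\C^n}\abs{z^\alpha}^2e^{-2\eta\Td\lambda_J\abs{z}^2}d\mu(z)$. The crux is the analogue of Lemma~\ref{lem:monomialintegrals} for this ``mixed'' choice of signs: $I_J(\alpha,\eta)=\infty$ for \emph{every} $\alpha\in\N^n_0$ and \emph{every} $\eta\in\Real$. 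This follows by factoring $I_J(\alpha,\eta)$ into $n$ one-dimensional Gaussian integrals, the $j$-th of which is finite precisely when $\eta c_j>0$, where $c_j=\lambda_j$ for $j\in J$ and $c_j=-\lambda_j$ for $j\notin J$. Here the hypothesis enters: since $l(J)=q=n_-=n_+$ and $J\notin\set{J_{n_-},J_{n_+}}$, the set $J$ is neither the collection of all negative-$\lambda_j$ indices nor that of all positive-$\lambda_j$ indices, so it contains some $j_0$ with $\lambda_{j_0}<0$ and some $j_1$ with $\lambda_{j_1}>0$; the factor at $j_0$ is finite only for $\eta<0$, the one at $j_1$ only for $\eta>0$, and for $\eta=0$ every factor diverges. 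Thus $I_J(\alpha,\eta)=\infty$ throughout, which forces $F_{J,\alpha}(\eta)=0$ for a.e.\ $\eta$.

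Finally, collecting the countably many exceptional null sets in $\eta$ (one for each $\alpha$), we get $F_J(\cdot,\eta)\equiv0$ for a.e.\ $\eta$, hence $\hat u_J\equiv0$ a.e., and therefore $u_J=0$ by \eqref{neg1}. As $J$ ranges over all strictly increasing multi-indices of length $q$ outside $\set{J_{n_-},J_{n_+}}$, this is precisely the assertion of Lemma~\ref{lemma vanish}. I do not expect a serious obstacle here: the whole argument is a direct adaptation of the proof of Theorem~\ref{thm:1.1}, and the one place that needs a moment's care is the divergence of the mixed integral $I_J(\alpha,\eta)$ --- the cases $J=J_{n_-}$ and $J=J_{n_+}$ are excluded exactly because there $I_J(\alpha,\eta)$ becomes finite (for $\eta<0$, resp.\ $\eta>0$), which is why those two components of $u$ need not vanish.
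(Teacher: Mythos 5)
Your proposal is correct and follows essentially the same route as the paper: pass to the partial Fourier transform via Lemma~\ref{fo1}, conjugate the variables indexed by $J$ to make $F_J(z,\eta)=e^{\eta\Td\lambda_J\abs{z}^2}\hat u_J$ holomorphic, and kill every Taylor coefficient by showing the mixed Gaussian monomial integrals diverge for all $\eta$ precisely because $J\notin\set{J_{n_-},J_{n_+}}$ forces incompatible sign constraints on $\eta$. The only cosmetic difference is that you carry out the power-series/divergence step explicitly, whereas the paper delegates it to Lemma~\ref{lem:monomialintegrals} and the argument of Lemma~\ref{lemma CR}.
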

\begin{proof}
Fix $J=(j_1, j_2, \cdots, j_q)\notin\set{J_{n_-}, J_{n_+}}$ with $j_1<j_2<\cdots<j_q.$ Set
\begin{equation}
\hat\lambda|z|^2=\sum_{k\in J}\lambda_k|z_k|^2-\sum_{k\not\in J}\lambda_k|z_k|^2.
\end{equation}
Let
$F_J(z, \eta)=\hat u_J(\xi, \eta) e^{\eta\hat\lambda|z|^2}$, where $\xi_i=\overline z_i$ if $i\in J$ and $\xi_i=z_i$ if $i\not\in J.$ Then (\ref{f1}) implies that $F_J(z, \eta)$ is holomorphic, for a.e. $\eta\in\Real$.
Moreover,
\begin{equation}
\int _{\mathbb C^n}|F_J(z, \eta)|^2e^{-2\eta\hat\lambda|z|^2}\mu(z)<\infty,\ \ \mbox{for a.e. $\eta\in\mathbb R$}.
\end{equation}
From $J=(j_1, j_2, \cdots, j_q)\notin\set{J_{n_-}, J_{n_+}}$, by using  and Lemma \ref{lem:monomialintegrals}, we see that
\begin{equation}\label{e-gue170604lc}
\int e^{-2\eta\hat\lambda|z|^2}d\mu(z)=\infty,\ \ \forall\eta\in\mathbb R.
\end{equation}
From \eqref{e-gue170604lc} and repeating the argument in the proof of Lemma \ref{lemma CR}, we deduce that $F_J(z, \eta)=0$, for a.e. $\eta\in\mathbb R, z\in\mathbb C^n$. From Parseval's formula, we deduce that $u_J=0$.
\end{proof}

Put
\begin{equation}\label{e-gue170604lcI}
\tilde S^{(q)}:=\frac{\abs{\lambda_1}\cdots\abs{\lambda_n}}{2\pi^{n+1}}n!\Bigr(I^{(q)}_{\varphi_-}\circ\tau_-+I^{(q)}_{\varphi_+}\circ\tau_+\Bigr).
\end{equation}

\begin{lemma}\label{l-gue170604lc}
For every $u\in L^2_{(0,q)}(H_{n+1})$, we have $\tilde S^{(q)}u\in\mathcal H^{q}_b(H_{n+1})$.
\end{lemma}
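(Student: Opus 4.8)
Proof proposal for Lemma 5.6 ($\tilde S^{(q)}u\in\mathcal H^q_b(H_{n+1})$).

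The plan is to mimic the structure of the proof of Lemma~\ref{l-gue170528xr}, treating the two summands $I^{(q)}_{\varphi_-}\circ\tau_-$ and $I^{(q)}_{\varphi_+}\circ\tau_+$ separately and checking directly that each output form lies in $\mathcal H^q_b(H_{n+1})$ via the characterization in Lemma~\ref{lem:equivkernel}. First I would reduce to the case $u\in C^\infty_0(H_{n+1})$ (more precisely $u$ a smooth compactly supported $(0,q)$-form): once the claim is established there, the general case follows by approximating $u$ in $L^2_{(0,q)}(H_{n+1})$ by smooth compactly supported forms, using Corollary~\ref{c-gue170528} to get $L^2$-convergence of $\tilde S^{(q)}u_j\to\tilde S^{(q)}u$, passing to convergence in $D'$, and using that $\mathcal H^q_b(H_{n+1})={\rm Ker\,}\Box^{(q)}_b$ is closed (equivalently, that the distributional equations in Lemma~\ref{lem:equivkernel} are preserved under $D'$-limits). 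Also I would work with the regularized operators $I^{(q)}_{\varphi_\pm,\varepsilon}$ carrying the cutoff $\chi(\varepsilon t)$, establish the desired differential equations for those, and then let $\varepsilon\to0^+$ using that $I^{(q)}_{\varphi_\pm,\varepsilon}u\to I^{(q)}_{\varphi_\pm}u$ in $C^\infty$ topology.

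The core computation is the analogue of \eqref{CCR2}. Fix $u\in C^\infty_0(H_{n+1})$ and consider $v:=\tau_- u = u_{J_{n_-}}d\ol z^{J_{n_-}}$, where $J_{n_-}=(1,\ldots,q)$. By Lemma~\ref{lem:equivkernel} I need to check, for the scalar function $(I^{(q)}_{\varphi_-,\varepsilon}v)_{J_{n_-}}$, that $\bigl(\frac{\partial}{\partial z_j}-i\lambda_j\ol z_j\frac{\partial}{\partial x_{2n+1}}\bigr)$ annihilates it for $j\in J_{n_-}$ and that $\bigl(\frac{\partial}{\partial\ol z_j}+i\lambda_j z_j\frac{\partial}{\partial x_{2n+1}}\bigr)$ annihilates it for $j\notin J_{n_-}$. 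Differentiating under the integral sign and using $\frac{\partial}{\partial x_{2n+1}}\mapsto it$ on $e^{it\varphi_-}$, the key point is to compute $\bigl(\frac{\partial}{\partial\ol z_j}+i\lambda_jz_j\frac{\partial}{\partial x_{2n+1}}\bigr)\varphi_-(x,y)$ and $\bigl(\frac{\partial}{\partial z_j}-i\lambda_j\ol z_j\frac{\partial}{\partial x_{2n+1}}\bigr)\varphi_-(x,y)$. Recall $\varphi_- = -x_{2n+1}+y_{2n+1}+i\sum_k|\lambda_k||z_k-w_k|^2+i\sum_k\lambda_k(\ol z_kw_k-z_k\ol w_k)$. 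For $j\notin J_{n_-}$ one has $\lambda_j>0$, so $|\lambda_j|=\lambda_j$ and the relevant derivative is $i|\lambda_j|(z_j-w_j)+i\lambda_j w_j - i\lambda_j z_j = i\lambda_j(z_j-w_j) - i\lambda_j(z_j-w_j)=0$, exactly as in \eqref{CCR2}. For $j\in J_{n_-}$ one has $\lambda_j<0$, so $|\lambda_j|=-\lambda_j$, and the derivative $\bigl(\frac{\partial}{\partial z_j}-i\lambda_j\ol z_j\frac{\partial}{\partial x_{2n+1}}\bigr)\varphi_-$ becomes $i|\lambda_j|(\ol z_j-\ol w_j)+i\lambda_j\ol w_j -i\lambda_j\ol z_j$; substituting $|\lambda_j|=-\lambda_j$ gives $-i\lambda_j(\ol z_j-\ol w_j)+i\lambda_j(\ol w_j-\ol z_j)$... let me recompute: $-i\lambda_j\ol z_j+i\lambda_j\ol w_j+i\lambda_j\ol w_j-i\lambda_j\ol z_j$, which does not obviously vanish, so the correct bookkeeping here is where care is needed. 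The right way is to note that $i\sum_k|\lambda_k||z_k-w_k|^2 + i\sum_k\lambda_k(\ol z_kw_k-z_k\ol w_k)$ should be reorganized: for $\lambda_j<0$, the "holomorphic in $\ol z_j$" behavior is what the defining geometry of $\varphi_-$ encodes, and one checks that $\bigl(\frac{\partial}{\partial z_j}-i\lambda_j\ol z_j\frac{\partial}{\partial x_{2n+1}}\bigr)$ applied to $e^{it\varphi_-}$ is annihilated because the $z_j$-dependence of $\varphi_-$ (for $j\in J_{n_-}$) is entirely through $\ol w_j z_j$-type antiholomorphic-in-$w$ but holomorphic-in-$z$ combined with $|\lambda_j||z_j-w_j|^2 = -\lambda_j(z_j-w_j)(\ol z_j-\ol w_j)$, whose $\partial_{z_j}$ is $-\lambda_j(\ol z_j-\ol w_j)$; adding the $-i\lambda_j\ol z_j\cdot(it)$ contribution from $\partial_{x_{2n+1}}$ of the $y_{2n+1}$-free part... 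The upshot: after the dust settles the coefficient multiplying $ite^{it\varphi_-}$ vanishes identically. The symmetric statement holds for $\varphi_+$ and $\tau_+$, with the roles of holomorphic/antiholomorphic indices swapped, since $\varphi_+$ is obtained from $\varphi_-$ by $x_{2n+1}\leftrightarrow -x_{2n+1}$ (up to sign) and $z\leftrightarrow w$-type conjugation in the cross terms.

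The main obstacle I anticipate is precisely the sign-bookkeeping in the previous paragraph: one must verify that $I^{(q)}_{\varphi_-}\circ\tau_-$ produces forms satisfying the $\mathcal H^q_b$-equations adapted to the multi-index $J_{n_-}$ (with its specific split into negative-$\lambda$ indices inside $J$ and positive-$\lambda$ indices outside $J$), and likewise that $I^{(q)}_{\varphi_+}\circ\tau_+$ produces forms adapted to $J_{n_+}$; these are genuinely different systems of equations, and the phases $\varphi_-,\varphi_+$ are tailored exactly so that each integral kernel is CR-closed in the appropriate sense. Once one checks $I^{(q)}_{\varphi_-,\varepsilon}v$ satisfies the correct annihilation equations via the vanishing of the bracketed coefficients (as in \eqref{CCR2}), taking $\varepsilon\to0^+$ in $C^\infty$ topology, then approximating a general $u\in L^2_{(0,q)}$ and passing to $D'$-limits using Corollary~\ref{c-gue170528} and closedness of $\mathcal H^q_b$, finishes the proof. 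A small additional point: since $\tau_\pm$ extract only the $J_{n_\mp}$-components, the image of $\tilde S^{(q)}$ automatically has no components along multi-indices $J\notin\{J_{n_-},J_{n_+}\}$, which is consistent with Lemma~\ref{lemma vanish} and must be compatible with membership in $\mathcal H^q_b(H_{n+1})$.
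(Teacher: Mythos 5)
Your proposal is correct and follows essentially the same route as the paper: regularize with $\chi(\varepsilon t)$, verify the annihilation equations of Lemma~\ref{lem:equivkernel} for each of $I^{(q)}_{\varphi_-,\varepsilon}\circ\tau_-$ and $I^{(q)}_{\varphi_+,\varepsilon}\circ\tau_+$ by differentiating the phase, let $\varepsilon\to0^+$, and then extend to all of $L^2_{(0,q)}(H_{n+1})$ by density together with Theorem~\ref{tema2} and Friedrichs' lemma. The sign computation you left unfinished does close up exactly as in \eqref{CCR2qa}: for $j\in J_{n_-}$ one has $\bigl(\tfrac{\partial}{\partial z_j}-i\lambda_j\ol z_j\tfrac{\partial}{\partial x_{2n+1}}\bigr)\varphi_-=i\abs{\lambda_j}(\ol z_j-\ol w_j)-i\lambda_j\ol w_j+i\lambda_j\ol z_j$, which vanishes because $\abs{\lambda_j}=-\lambda_j$ there.
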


\begin{proof}
Let $\chi\in C^\infty_0(\Real)$ be as in the beginning of Section~\ref{s-gue170528}. Fix $\varepsilon>0$. We first show that  for all $u\in\Omega^{0,q}_0(H_{n+1})$, we have
\begin{equation}\label{CR1q}
\begin{split}
I^{(q)}_{\varphi_-,\varepsilon}\circ\tau_-u:&=\frac{1}{n!}\int_{H_{n+1}}\int_0^\infty e^{it\varphi_-(x, y)}t^n(\tau_-u)(y)\chi(\varepsilon t)dtd\mu_{H_{n+1}}(y)\\
&=\frac{1}{n!}\int_{H_{n+1}}\int_0^\infty e^{it\varphi_-(x, y)}t^nu_{J_{n_-}}(y)d\ol z^{J_{n_-}}(y)\chi(\varepsilon t)dtd\mu_{H_{n+1}}(y)\in \mathcal H^{q}_b(H_{n+1}).
\end{split}
\end{equation}
Let  $u=\sideset{}{'}\sum_{l(J)=q}u_Jd\ol z^J\in\Omega^{0,q}_0(H_{n+1})$. For every $j=1, \cdots, n$, $j\notin J_{n_-}$,  we have
\begin{equation}\label{CCR2q}
\begin{split}
&\overline Z_j\left(\int_{H_{n+1}}\int_0^\infty e^{it\varphi_-(x, y)}t^nu_{J_{n_-}}(y)d\ol z^{J_{n_-}}\chi(\varepsilon t)dt d\mu_{H_{n+1}}(y)\right)\\
=&\int_{H_{n+1}}\left(\frac{\partial}{\partial\overline z_j}+i\lambda_j z_j\frac{\partial}{\partial x_{2n+1}}\right)\varphi_-(x, y)\int_0^\infty it e^{it\varphi_-(x, y)}t^nu_{J_{n_-}}(y)\chi(\varepsilon t)dt d\mu_{H_{n+1}}(y)\\
=&\int_{H_{n+1}}\left[i\abs{\lambda_j}(z_j-w_j)+i\lambda_jw_j-i\lambda_jz_j\right]
\int_0^\infty it e^{it\varphi_-(x, y)}t^nu_{J_{n_-}}(y)d\ol z^{J_{n_-}}\chi(\varepsilon t)dt d\mu_{H_{n+1}}(y)\\
=&0.
\end{split}
\end{equation}
Similarly, for every $j=1, \cdots, n$, $j\in J_{n_-}$, we have
\begin{equation}\label{CCR2qa}
\begin{split}
&Z_j\left(\int_{H_{n+1}}\int_0^\infty e^{it\varphi_-(x, y)}t^nu_{J_{n_-}}(y)d\ol z^{J_{n_-}}\chi(\varepsilon t)dt d\mu_{H_{n+1}}(y)\right)\\
=&\int_{H_{n+1}}\left(\frac{\partial}{\partial z_j}-i\lambda_j\ol z_j\frac{\partial}{\partial x_{2n+1}}\right)\varphi_-(x, y)\int_0^\infty it e^{it\varphi_-(x, y)}t^nu_{J_{n_-}}(y)\chi(\varepsilon t)dt d\mu_{H_{n+1}}(y)\\
=&\int_{H_{n+1}}\left[i\abs{\lambda_j}(\ol z_j-\ol w_j)-i\lambda_j\ol w_j+i\lambda_j\ol z_j\right]
\int_0^\infty it e^{it\varphi_-(x, y)}t^nu_{J_{n_-}}(y)d\ol z^{J_{n_-}}\chi(\varepsilon t)dt d\mu_{H_{n+1}}(y)\\
=&0.
\end{split}
\end{equation}
From \eqref{CCR2q}, \eqref{CCR2qa} and \eqref{CR1},  we get the conclusion of (\ref{CR1q}). Let $u\in\Omega^{0,q}_0(H_{n+1})$. Since $\lim_{\varepsilon\To0^+}I^{(q)}_{\varphi_-,\varepsilon}\circ\tau_-u=I^{(q)}_{\varphi_-}\circ\tau_-u$ in $\Omega^{0,q}(H_{n+1})$ topology, we deduce that $\ddbar_bI^{(q)}_{\varphi_-}\circ\tau_-u=0$ and
$(\,I^{(q)}_{\varphi_-}\circ\tau_-u\,|\,\ddbar_bv\,)=0$, for every $v\in\Omega^{0,q}_0(H_{n+1})$. By Friedrichs' lemma, we conclude that $(\,I^{(q)}_{\varphi_-}\circ\tau_-u\,|\,\ddbar_bv\,)=0$, for every $v\in{\rm Dom\,}\ddbar_b$ and hence $I^{(q)}_{\varphi_-}\circ\tau_-u\in\mathcal H^{q}_b(H_{n+1})$. Similarly, we can repeat the argument above with minor change and deduce that $I^{(q)}_{\varphi_+}\circ\tau_+u\in\mathcal H^{q}_b(H_{n+1})$, for every $u\in\Omega^{0,q}_0(H_{n+1})$.

Let $u\in L^2_{(0,q)}(H_{n+1})$ and take $u_j\in\Omega^{0,q}_0(H_{n+1})$, $j=1,2,\ldots$, $u_j\To u$ in $L^2_{(0,q)}(H_{n+1})$ as $j\To\infty$.  From Theorem~\ref{tema2}, we see that
$I^{(q)}_{\varphi_-}\circ\tau_-u_j\To I^{(q)}_{\varphi_-}\circ\tau_-u$ in $L^2_{(0,q)}(H_{n+1})$ as $j\To\infty$,  and $I^{(q)}_{\varphi_+}\circ\tau_+u_j\To I^{(q)}_{\varphi_+}\circ\tau_+u$ in $L^2_{(0,q)}(H_{n+1})$ as $j\To\infty$.  Again, by using  Friedrichs' lemma, we conclude that $\ddbar_b(I^{(q)}_{\varphi_-}\circ\tau_-+I^{(q)}_{\varphi_+}\circ\tau_+\Bigr)u=0$, $\ol{\pr}^*_b(I^{(q)}_{\varphi_-}\circ\tau_-+I^{(q)}_{\varphi_+}\circ\tau_+\Bigr)u=0$ and hence $(I^{(q)}_{\varphi_-}\circ\tau_-+I^{(q)}_{\varphi_+}\circ\tau_+\Bigr)u\in\mathcal H^{q}_b(H_{n+1})$. The lemma follows.
\end{proof}

Let
\begin{equation}\label{e-gue170605m}
\hat{T}^{1, 0}H_{n+1}:={\rm span}_{\mathbb C}\{\frac{\partial}{\partial z_j}-i\abs{\lambda_j}\overline z_j\frac{\partial}{\partial x_{2n+1}}, j=1, \cdots, n\}.
\end{equation}
Then, $\hat{T}^{1, 0}H_{n+1}$ is a CR structure of $H_{n+1}$. Let $\hat\ddbar_b$ be the tangential Cauchy Riemann operator with respect to $\hat{T}^{1, 0}H_{n+1}$ and
let $\hat S^{(0)}:L^2(H_{n+1})\To{\rm Ker\,}\hat\ddbar_b$ be the associated Szeg\H{o} projection. Put
\[\begin{split}
&\hat\varphi(x, y)\\
&=-x_{2n+1}+y_{2n+1}+i\sum_{j=1}^n|\lambda_j||z_j-w_j|^2+i\sum_{j=1}^n
\abs{\lambda_j}(\overline z_jw_j-z_j\overline w_j)\in C^\infty(H_{n+1}\times H_{n+1}).\end{split}\]
From Theorem~\ref{main theorem 2}, we see that
\begin{equation}\label{e-gue170604qa}
\hat S^{(0)}=\frac{\abs{\lambda_1}\cdots\abs{\lambda_n}}{2\pi^{n+1}}n!I^{(0)}_{\hat\varphi}\ \ \mbox{on $L^2(X)$},
\end{equation}
where $I^{(0)}_{\hat\varphi}:L^2(H_{n+1})\To L^2(H_{n+1})$ is defined as in \eqref{e-gue170525I}, \eqref{e-gue170525II}.

Let $u(x)=u_{J_{n_-}}(z,x_{2n+1})d\ol z^{J_{n_-}}+u_{J_{n_+}}(z,x_{2n+1})\in L^2_{(0,q)}(H_{n+1})$. Put
\begin{equation}\label{e-gue170605}
\begin{split}
&v_{J_{n_-}}(z,x_{2n+1}):=u_{J_{n_-}}(\ol z_1,\ldots,\ol z_{n_-},z_{n_-+1},\ldots,z_n,x_{2n+1})\in L^2(H_{n+1}),\\
&v_{J_{n_+}}(z,x_{2n+1}):=u_{J_{n_+}}(z_1,\ldots,z_{n_-},\ol z_{n_-+1},\ldots,\ol z_n,-x_{2n+1})\in L^2(H_{n+1}).
\end{split}
\end{equation}
It is straightforward to see that
\begin{equation}\label{e-gue170605I}
\begin{split}
&(I^{(q)}_{\varphi_-}\circ\tau_- u)(z,x_{2n+1})=(I^{(0)}_{\hat\varphi}v_{J_{n_-}})(\ol z_1,\ldots,\ol z_{n_-},z_{n_-+1},\ldots,z_n,x_{2n+1})d\ol z^{J_{n_-}},\\
&(I^{(q)}_{\varphi_+}\circ\tau_+ u)(z,x_{2n+1})=(I^{(0)}_{\hat\varphi}v_{J_{n_+}})(z_1,\ldots,z_{n_-},\ol z_{n_-+1},\ldots,\ol z_n,-x_{2n+1})d\ol z^{J_{n_+}}.
\end{split}
\end{equation}

Now, we can prove

\begin{theorem}\label{t-gue170604lc}
 Let $u\in \mathcal H^q_b(H_{n+1})$. Then $\Td S^{(q)}u=u$.
\end{theorem}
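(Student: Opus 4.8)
The plan is to reduce the identity $\Td S^{(q)}u=u$ to the $q=0$ statement of Theorem~\ref{t-gue170603}, applied to the positive CR structure $\hat T^{1,0}H_{n+1}$ introduced in \eqref{e-gue170605m}, whose defining parameters are the positive numbers $\abs{\lambda_1},\ldots,\abs{\lambda_n}$. As stated above we work in the case $q=n_-=n_+$; the cases $q=n_-\neq n_+$ and $q=n_+\neq n_-$ go through in exactly the same way, with only one of the two components below present.

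First I would take $u\in\mathcal{H}^q_b(H_{n+1})$, use Lemma~\ref{lemma vanish} to write $u=u_{J_{n_-}}d\ol z^{J_{n_-}}+u_{J_{n_+}}d\ol z^{J_{n_+}}$, and then form $v_{J_{n_-}},v_{J_{n_+}}\in L^2(H_{n+1})$ as in \eqref{e-gue170605}. By Lemma~\ref{lem:equivkernel}, the hypothesis $u\in\mathcal{H}^q_b(H_{n+1})$ is equivalent to the system
\[\Bigl(\frac{\pr}{\pr z_j}-i\lambda_j\ol z_j\frac{\pr}{\pr x_{2n+1}}\Bigr)u_J=0\ \ (j\in J),\qquad \Bigl(\frac{\pr}{\pr\ol z_j}+i\lambda_jz_j\frac{\pr}{\pr x_{2n+1}}\Bigr)u_J=0\ \ (j\notin J),\qquad J\in\set{J_{n_-},J_{n_+}}.\]
Inserting $\lambda_j=-\abs{\lambda_j}$ for $1\leq j\leq n_-$ and $\lambda_j=\abs{\lambda_j}$ for $n_-<j\leq n$, and propagating through the chain rule the conjugation $z_j\mapsto\ol z_j$ ($1\leq j\leq n_-$) that defines $v_{J_{n_-}}$, resp.\ the conjugation $z_j\mapsto\ol z_j$ ($n_-<j\leq n$) together with the reflection $x_{2n+1}\mapsto-x_{2n+1}$ that defines $v_{J_{n_+}}$, a direct computation shows that both systems turn into
\[\Bigl(\frac{\pr}{\pr\ol z_j}+i\abs{\lambda_j}z_j\frac{\pr}{\pr x_{2n+1}}\Bigr)v=0,\qquad j=1,\ldots,n.\]
These substitutions are measure preserving, so $L^2$ membership is preserved, and by Lemma~\ref{lem:equivkernel} applied to the structure $\hat T^{1,0}H_{n+1}$ with $q=0$ this says precisely that $v_{J_{n_-}},v_{J_{n_+}}\in{\rm Ker\,}\hat\ddbar_b$.

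Next I would invoke Theorem~\ref{t-gue170603} for the positive CR structure $\hat T^{1,0}H_{n+1}$: since $v_{J_{n_-}},v_{J_{n_+}}\in{\rm Ker\,}\hat\ddbar_b$, it yields
\[\frac{\abs{\lambda_1}\cdots\abs{\lambda_n}}{2\pi^{n+1}}n!\,I^{(0)}_{\hat\varphi}v_{J_{n_-}}=v_{J_{n_-}},\qquad \frac{\abs{\lambda_1}\cdots\abs{\lambda_n}}{2\pi^{n+1}}n!\,I^{(0)}_{\hat\varphi}v_{J_{n_+}}=v_{J_{n_+}}\]
(equivalently $\hat S^{(0)}v_{J_{n_\pm}}=v_{J_{n_\pm}}$, by \eqref{e-gue170604qa}). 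Substituting these into \eqref{e-gue170605I} and using the definition \eqref{e-gue170605} of $v_{J_{n_-}},v_{J_{n_+}}$ — each substitution occurring there is an involution, so composing it with itself recovers $u_{J_{n_-}}$, resp.\ $u_{J_{n_+}}$ — the $d\ol z^{J_{n_-}}$-component of $\Td S^{(q)}u$ equals $u_{J_{n_-}}$ and its $d\ol z^{J_{n_+}}$-component equals $u_{J_{n_+}}$, whence $\Td S^{(q)}u=u$. The step I expect to be the main obstacle is the bookkeeping in the middle paragraph: one must verify that the sign conventions on the $\lambda_j$ combine with the conjugation and reflection substitutions to produce exactly the $\hat\ddbar_b$-closed system, with no stray signs. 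Everything else is a direct application of Lemma~\ref{lemma vanish}, Lemma~\ref{lem:equivkernel}, Theorem~\ref{t-gue170603} and the identities \eqref{e-gue170604qa}, \eqref{e-gue170605I}.
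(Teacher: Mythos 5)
Your proposal is correct and follows essentially the same route as the paper's proof: decompose $u$ via Lemma~\ref{lemma vanish}, pass to $v_{J_{n_-}},v_{J_{n_+}}$ by the conjugation/reflection substitutions of \eqref{e-gue170605}, verify via the system \eqref{CR1} that these lie in ${\rm Ker\,}\hat\ddbar_b$ for the positive structure $\hat T^{1,0}H_{n+1}$, and then transport the reproducing property of $\hat S^{(0)}$ (i.e.\ \eqref{e-gue170604qa}, which rests on Theorem~\ref{t-gue170603}) back through \eqref{e-gue170605I}. The only difference is cosmetic: you spell out the chain-rule bookkeeping for the sign conventions that the paper leaves implicit.
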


\begin{proof}
Let $u=\sideset{}{'}\sum_{l(J)=q}u_Jd\ol z^J\in \mathcal H^q_b(H_{n+1})$. From Lemma~\ref{lemma vanish}, we see that $u(x)=u_{J_{n_-}}(z,x_{2n+1})d\ol z^{J_{n_-}}+u_{J_{n_+}}(z,x_{2n+1})d\ol z^{J_{n_+}}$.
Let
\[v_{J_{n_-}}(z,x_{2n+1})\in L^2(H_{n+1}),\ \ v_{J_{n_+}}(z,x_{2n+1})\in L^2(H_{n+1})\]
be as in \eqref{e-gue170605}. From \eqref{CR1}, we see that
$\hat\ddbar_bv_{J_{n_-}}=0$ and $\hat\ddbar_bv_{J_{n_+}}=0$, where $\hat\ddbar_b$ is the tangential Cauchy-Riemann operator with respect to the CR stricture $\hat{T}^{1, 0}H_{n+1}$ in \eqref{e-gue170605m}. Hence, we find
\begin{equation}\label{e-gue170605mI}
\hat S^{(0)}v_{J_{n_-}}=v_{J_{n_-}},\ \ \hat S^{(0)}v_{J_{n_+}}=v_{J_{n_+}},
\end{equation}
where $\hat S^{(0)}:L^2(H_{n+1})\To{\rm Ker\,}\hat\ddbar_b$ is the Szeg\H{o} projection. From \eqref{e-gue170605mI}, \eqref{e-gue170604qa}, \eqref{e-gue170605} and \eqref{e-gue170605I}, we get $\Td S^{(q)}u=u$. The theorem follows.
\end{proof}

\begin{proof}[Proof of Theorem \ref{main theorem 3}]
Let $u\in L^2_{(0,q)}(H_{n+1})$. From Lemma~\ref{l-gue170604lc}, we see that $\Td S^{(q)}u\in\mathcal H^q_b(H_{n+1})$. To show that $\Td S^{(q)}=S^{(q)}$, we only need to show that $(I-\Td S^{(q)})u\perp\mathcal H^q_b(H_{n+1})$. We observe that $\Td S^{(q)}$ is self-adjoint, that is,
\begin{equation}\label{e-gue170604wf}
(\,\Td S^{(q)}g\,|\,h\,)=(\,g\,|\,\Td S^{(q)}h\,),\ \ \forall g, h\in L^2_{(0,q)}(H_{n+1}).
\end{equation}
Let $f\in\mathcal H^q_b(H_{n+1})$. From Theorem~\ref{t-gue170604lc} and \eqref{e-gue170604wf}, we have
\[(\,(I-\Td S^{(q)})u\,|\,f\,)=(\,u\,|\,f\,)-(\,\Td S^{(q)}u\,|\,f\,)=(\,u\,|\,f\,)-(\,u\,|\,\Td S^{(q)}f\,)=(\,u\,|\,f\,)-(\,u\,|\,f\,)=0.\]
The theorem follows.
\end{proof}
\section{Relations to weighted Bergman kernels on \(\C^n\)}\label{sec:relbergszeg}
In this section we show how the Szeg\H{o} kernel on the Heisenberg group is related to a weighted Bergman kernel on \(\C^n\) (see Theorem \ref{thm:szegobergman}). The connection mainly depends on Lemma \ref{fo1}. 
We restrict ourselves to the case  \(\lambda_1,\ldots,\lambda_n>0\) which reduces the problem to the study of the Bergman kernel for holomorphic functions. However, a generalization of the relation between Szeg\H{o}- and Bergman kernels to the case \(\lambda_1\leq\ldots\leq\lambda_q<0<\lambda_{q+1}\leq\ldots\leq\lambda_n\) is possible.

Let \(\psi\colon\C^n\rightarrow \R\) be a smooth function. We denote by \(L^2(\C^n,\psi)\) the weighted \(L^2\) space with norm
\[\|f\|_\psi^2=\int_{\C^n}|f(z)|^2e^{-2\psi(z)}d\mu(z)\]
and let \(H^0_{\psi}(\C^n)=\mathcal{O}(\C^n)\cap L^2(\C^n,\psi)\) be the space of holomorphic functions with finite weighted \(L^2\)-norm.
The Bergman kernel is a smooth function defined by
\[P_\psi(z,w)=e^{-(\psi(z)+\psi(w))}\sum_j s_j(z)\overline{s_j(w)}\]
where \(\{s_j\}\) is an ONB of \(H^0_{\psi}(\C^n)\).  
We have for example
\begin{align}\label{eq:reproducingBK}
f(z)e^{-\psi(z)}=\int_{\C^n}P_\psi(z,w)f(w)e^{-\psi(w)}d\mu(w)
\end{align}
for any \(f\in H^0_\psi(\C^n)\). 
Set \(\psi(w)=\sum_{j=1}^{n}\lambda_j|w_j|^2\) with \(\lambda_1,\ldots,\lambda_n>0\). Then we have
\begin{align}\label{Eq:BKforT}
P_{t\psi}(z,w)=1_{(0,\infty)}(t)\frac{t^n}{\pi^n}\lambda_1\cdot\ldots\cdot\lambda_ne^{-t\sum_{j=1}^n \lambda_j|w_j-z_j|^2-t\sum_{j=1}^n \lambda_j(w_j\overline{z}_j-\overline{w}_jz_j)}.
\end{align}

Now consider \(H_{n+1}=\C^n\times \R\). We define an operator \(\tilde{P}\colon L^2(H_{n+1})\rightarrow L^2(H_{n+1})\) by \(\tilde{P}(u)(x)=\hat{v}(z,x_{2n+1})\) with
\[v(z,t)=\int_{\C^n}P_{t\psi}(z,w)\check{u}(w,t)d\mu(w)\]
and \(v\mapsto \hat{v}\) (or \(u\mapsto \check{u}\)) denotes the Fourier transform in the last argument (or its inverse), using coordinates \(x=(z,x_{2n+1})\) and \(y=(w,y_{2n+1})\).
In other words we have:
\begin{definition}\label{def:segobergman}
	Given \(u\in L^2(H_{n+1})\), then
	\begin{align*}
	\tilde{P}(u)=\frac{1}{2\pi}\int_{\R}e^{-itx_{2n+1}}\left(\int_{\C^n}P_{t\psi}(z,w)\left(\int_{\R}e^{ity_{2n+1}}u(w,y_{2n+1})dy_{2n+1}\right)d\mu(w)\right)dt, 
	\end{align*}
where the order of integration is important. The integrals  are all well defined by Lemma \ref{2017-06-29a1} and  the extensions of the Fourier transform in the \(L^2(H_{n+1})\).
\end{definition}
\begin{remark}
	Note that given \(u\in C_0^\infty(H_{n+1})\) we have  \(\)
	\begin{align*}
	\tilde{P}&(u)(x)=\\
&\lim_{\varepsilon\rightarrow0}\frac{1}{2\pi}\int_{\R}\int_{H_{n+1}}
\chi(\varepsilon t)
e^{-it(x_{2n+1}-y_{2n+1})}P_{t\psi}(z,w)u(y)d\mu_{H_{n+1}}(y)dt,
	\end{align*}
	 where $\chi\in C^\infty_0(\Real)$ with $\chi(x)=1$ if $\abs{x}\leq 1$ and $\chi(x)=0$ if $\abs{x}>2$.
	  Hence we  formally write
\begin{equation}
\begin{split}
\tilde{P}(x,y)
&=\int_{\R} \frac{1}{2\pi}e^{-it(x_{2n+1}-y_{2n+1})}P_{t\psi}(z,w)dt\\ &=\frac{\lambda_1\cdots\lambda_n}{2\pi^{n+1}}\int_0^\infty e^{it\varphi(x, y)}t^ndt
\end{split}
\end{equation}
for the distribution kernel \(\tilde{P}(x,y)\) of \(\tilde{P}\), using \(x=(z,x_{2n+1})\) and \(y=(w,y_{2n+1})\). Thus, the operator $\tilde P$ is a complex Fourier integral operator in the sense of \cite{GrSj94}.
\end{remark}
We need to show that \(\tilde{P}\) is well defined in the sense that \(\tilde{P}(L^2(H_{n+1}))\subset L^2(H_{n+1})\).
\begin{lemma}\label{lem:projectionL2}
	Given \(u\in L^2(H_{n+1})\), one has \(\tilde{P}(u)\in L^2(H_{n+1})\) with \(\|\tilde{P}(u)\|\leq\|u\|\).
\end{lemma}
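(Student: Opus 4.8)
The plan is to push everything through the partial Fourier transform in the last variable $x_{2n+1}$, where the operator $\tilde P$ splits into a family (parametrized by $t\in\R$) of Bergman projections on $\C^n$, and then to use that a Bergman projection is a contraction.

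First I would record the two relevant Plancherel identities. Applying \eqref{neg1} with $v(z,t)$ (here $t$ plays the role of the ``spatial'' variable $x_{2n+1}$ and $x_{2n+1}$ the role of the frequency $\eta$), and recalling that by Definition \ref{def:segobergman} one has $\tilde P(u)(z,x_{2n+1})=\hat v(z,x_{2n+1})$,
\[\norm{\tilde P(u)}^2=\int_{H_{n+1}}\abs{\hat v(z,x_{2n+1})}^2d\mu(z)dx_{2n+1}=2\pi\int\abs{v(z,t)}^2d\mu(z)dt.\]
Likewise, since $\check u(z,t)=\tfrac{1}{2\pi}\hat u(z,-t)$, another application of \eqref{neg1} gives
\[\int\abs{\check u(z,t)}^2d\mu(z)dt=\frac{1}{2\pi}\norm{u}^2.\]
Thus the lemma reduces to the slicewise estimate $\int_{\C^n}\abs{v(z,t)}^2d\mu(z)\le\int_{\C^n}\abs{\check u(w,t)}^2d\mu(w)$ for a.e.\ $t$.

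Next, for fixed $t$ I would identify the operator $h\mapsto\int_{\C^n}P_{t\psi}(z,w)h(w)d\mu(w)$ on $L^2(\C^n)=L^2(\C^n,d\mu)$. For $t\le0$ the kernel $P_{t\psi}$ vanishes by \eqref{Eq:BKforT}, hence $v(\cdot,t)=0$. For $t>0$, multiplication by $e^{t\psi}$ is a unitary $L^2(\C^n,d\mu)\to L^2(\C^n,t\psi)$, and writing $h=e^{-t\psi}\tilde h$ one checks, using $P_{t\psi}(z,w)=e^{-t\psi(z)-t\psi(w)}\sum_j s_j(z)\overline{s_j(w)}$ with $\{s_j\}$ an ONB of $H^0_{t\psi}(\C^n)$, that $h\mapsto\int P_{t\psi}(\cdot,w)h(w)d\mu(w)$ is this unitary conjugate of the orthogonal projection $L^2(\C^n,t\psi)\to H^0_{t\psi}(\C^n)$ (equivalently, it is self-adjoint and idempotent on $L^2(\C^n,d\mu)$, by the reproducing identity $\int P_{t\psi}(z,w)\overline{P_{t\psi}(\zeta,w)}d\mu(w)=P_{t\psi}(z,\zeta)$, which is a restatement of \eqref{eq:reproducingBK}). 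In particular it is a contraction on $L^2(\C^n)$, so $\norm{v(\cdot,t)}_{L^2(\C^n)}\le\norm{\check u(\cdot,t)}_{L^2(\C^n)}$. Combining with the two Plancherel identities gives $\norm{\tilde P(u)}^2\le 2\pi\cdot\tfrac{1}{2\pi}\norm{u}^2=\norm{u}^2$.

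The point requiring care is the measure-theoretic bookkeeping: one needs $\check u(\cdot,t)\in L^2(\C^n)$ for a.e.\ $t$ (which holds by Fubini, exactly as in the discussion following \eqref{neg1}), joint measurability of $(z,t)\mapsto v(z,t)$ so that the slicewise bound may be integrated in $t$, and the identification of the function delivered by Definition \ref{def:segobergman} with $\hat v$ — all of which rests on the well-definedness statements cited there (Lemma \ref{2017-06-29a1}). A clean way to sidestep these subtleties is to first prove the inequality for $u\in C_0^\infty(H_{n+1})$, where $v$ is smooth and every manipulation above is elementary, and then extend to arbitrary $u\in L^2(H_{n+1})$ by density, using the just-established bound together with the $L^2$-continuity of $u\mapsto\check u$ and of the Fourier transform in $x_{2n+1}$.
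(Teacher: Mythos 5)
Your proof is correct and follows essentially the same route as the paper: reduce via the partial Fourier transform and Plancherel to a slicewise estimate on $\C^n$, then use that the operator with kernel $P_{t\psi}$ is (a unitary conjugate of) an orthogonal projection, hence a contraction on $L^2(\C^n)$. The paper verifies this last point by the explicit orthogonal decomposition $u(\cdot,t)e^{t\psi}=f+g$ in $L^2(\C^n,t\psi)$ (its Lemma~\ref{2017-06-29a1}), which is just another phrasing of your conjugation argument, and your bookkeeping of the $2\pi$ factors is accurate.
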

\begin{proof}
	We have that the Fourier transform in the last argument and its inverse preserve \(L^2(H_{n+1})\). More precisely, given \(u\in L^2(H_{n+1})\) we find \(\hat{u},\check{u}\in L^2(H_{n+1})\) with \((2\pi)^{-1}\|\hat{u}\|=\|u\|=2\pi\|\check{u}\|\). Then the proof can be deduced from the following Lemma.
\end{proof}
\begin{lemma}\label{2017-06-29a1}
	Given \(u\in L^2(H_{n+1})\), one has \(v\in L^2(H_{n+1})\) with \(\|v\|\leq\|u\|\), where
	\[v(z,t)=\int_{\C^n}P_{t\psi}(z,w)u(w,t)d\mu(w)\]
	for almost every \(t\in\R\).
\end{lemma}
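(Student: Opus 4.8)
The plan is to establish the bound fiberwise in the last variable $t$ and then integrate. Since $u\in L^2(H_{n+1})$, Fubini's theorem shows that $u(\cdot,t)\in L^2(\C^n,d\mu)$ for almost every $t\in\R$ and that $\norm{u}^2=\int_{\R}\norm{u(\cdot,t)}^2_{L^2(\C^n,d\mu)}\,dt$ (with $d\mu(w)=2^ndw$, the factors $2^n$ match on both sides). Fix such a $t$. If $t\le 0$, then $P_{t\psi}\equiv 0$ by \eqref{Eq:BKforT}, so $v(\cdot,t)\equiv 0$ and there is nothing to prove. If $t>0$, then by \eqref{Eq:BKforT} we have $\abs{P_{t\psi}(z,w)}=\frac{t^n}{\pi^n}\lambda_1\cdots\lambda_n\,e^{-t\sum_{j}\lambda_j\abs{z_j-w_j}^2}$, a Gaussian depending only on $z-w$; in particular $P_{t\psi}(z,\cdot)\in L^1(\C^n,d\mu)\cap L^2(\C^n,d\mu)$ with norms independent of $z$, so the integral defining $v(z,t)$ converges absolutely for every $z\in\C^n$ by Cauchy--Schwarz.

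The heart of the matter is to identify the integral operator $K_{t\psi}\colon f\mapsto \int_{\C^n}P_{t\psi}(\cdot,w)f(w)\,d\mu(w)$ with the orthogonal projection of $L^2(\C^n,d\mu)$ onto the subspace $W_{t\psi}:=\set{e^{-t\psi}f\,;\ f\in H^0_{t\psi}(\C^n)}$, where $\psi(w)=\sum_j\lambda_j\abs{w_j}^2$. Granting this, we get $\norm{v(\cdot,t)}_{L^2(\C^n)}=\norm{K_{t\psi}u(\cdot,t)}_{L^2(\C^n)}\le\norm{u(\cdot,t)}_{L^2(\C^n)}$, the desired fiberwise bound. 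To prove the identification, I would fix an orthonormal basis $\set{s_\alpha}$ of $H^0_{t\psi}(\C^n)$ (for instance the normalized monomials $z^\alpha/\norm{z^\alpha}_{t\psi}$), note that $f\mapsto e^{-t\psi}f$ is an isometry of $L^2(\C^n,t\psi)$ into $L^2(\C^n,d\mu)$ so that $\set{e^{-t\psi}s_\alpha}$ is an orthonormal basis of the closed subspace $W_{t\psi}$, and then use the kernel expansion
\[P_{t\psi}(z,w)=\sum_\alpha\bigl(e^{-t\psi(z)}s_\alpha(z)\bigr)\overline{\bigl(e^{-t\psi(w)}s_\alpha(w)\bigr)},\]
which converges in $L^2(\C^n,d\mu)$ in the variable $w$ for each fixed $z$ (indeed $\sum_\alpha\abs{e^{-t\psi(z)}s_\alpha(z)}^2=P_{t\psi}(z,z)=\frac{t^n}{\pi^n}\lambda_1\cdots\lambda_n<\infty$ by \eqref{Eq:BKforT}), to conclude that $K_{t\psi}f=\sum_\alpha\langle f,e^{-t\psi}s_\alpha\rangle_{L^2(d\mu)}\,e^{-t\psi}s_\alpha$ for every $f\in L^2(\C^n,d\mu)$. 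Bessel's inequality then yields the contraction. Alternatively, one may first note that $K_{t\psi}$ is bounded on $L^2(\C^n,d\mu)$ by Schur's test, observe that its kernel is Hermitian and satisfies $\int P_{t\psi}(z,w)P_{t\psi}(w,\zeta)\,d\mu(w)=P_{t\psi}(z,\zeta)$ --- a Gaussian integral, or the orthonormality of $\set{s_\alpha}$ --- hence $K_{t\psi}$ is a self-adjoint idempotent, and therefore an orthogonal projection.

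It remains to check that $(z,t)\mapsto v(z,t)$ is jointly measurable on $H_{n+1}$; this follows from Fubini applied to the jointly measurable integrand $P_{t\psi}(z,w)u(w,t)$, or by writing $v(\cdot,t)$ as the $L^2$-limit of the truncations $\int_{\abs{w}\le R}P_{t\psi}(\cdot,w)u(w,t)\,d\mu(w)$. Integrating the fiberwise bound over $t$ then gives
\[\norm{v}^2=\int_{\R}\norm{v(\cdot,t)}^2_{L^2(\C^n,d\mu)}\,dt\le\int_{\R}\norm{u(\cdot,t)}^2_{L^2(\C^n,d\mu)}\,dt=\norm{u}^2,\]
which is the assertion. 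The only genuinely non-formal point is the identification of $K_{t\psi}$ with the orthogonal projection onto $W_{t\psi}$ --- everything else is an application of Fubini's theorem and Bessel's inequality. Note that a crude convolution estimate would give only $\norm{v(\cdot,t)}_{L^2}\le 2^n\norm{u(\cdot,t)}_{L^2}$, so it is really the holomorphic (projection) structure that is responsible for the sharp constant $1$.
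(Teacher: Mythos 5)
Your proof is correct and follows essentially the same route as the paper: both reduce the claim to the fact that $f\mapsto\int_{\C^n}P_{t\psi}(\cdot,w)f(w)\,d\mu(w)$ is the orthogonal projection of $L^2(\C^n,d\mu)$ onto $e^{-t\psi}H^0_{t\psi}(\C^n)$ (the paper phrases this as the orthogonal decomposition $u(\cdot,t)e^{t\psi}=f+g$ and a Pythagorean identity $I_1+I_2=\|u\|^2$), and then integrate the resulting fiberwise contraction over $t$. You merely supply more explicit justification of the projection identification than the paper's appeal to ``the properties of the Bergman kernel.''
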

\begin{proof}
	Since \(u\in L^2(H_{n+1})\) we find \(u(\cdot,t)\in L^2(\C^n)\) for all \(t\in A\) for some \(A\subset \R\), such that \(\R\setminus A\) has zero measure. We find \(u(\cdot,t)e^{t\psi(\cdot)}\in L^2(\C^n,t\psi)\) and hence it has a unique decomposition
	\[u(z,t)e^{t\psi(z)}=f(z,t)+g(z,t)\] with
	\(f(\cdot,t)\in H^0_{t\psi}(\C^n)\) and \(g(\cdot,t)\in H^0_{t\psi}(\C^n)^{\perp}\) for all \(t\in A\). We write \(u(z,t)=f(z,t)e^{-t\psi(z)}+g(z,t)e^{-t\psi(z)}\) and using the properties of the Bergman kernel we find
	\begin{align}\label{eq:represantf}
	f(z,t)e^{-t\psi(z)}=\int_{\C^n}P_{t\psi}(z,w)u(w,t)d\mu(w)
	\end{align}
	for all \(t\in A\).
	Combining (\ref{Eq:BKforT}) and (\ref{eq:represantf}) we find that
	\[(z,t)\mapsto\begin{cases}
	f(z,t)e^{-t\psi(z)}&, \text{ if } t\in A,\\
	0 &, \text{ else}.
	\end{cases}\]
	and hence \((z,t)\mapsto 1_{A}(t)g(z,t)e^{-t\psi(z)}\) define measurable functions on \(H_{n+1}\). 
	We have
	\[\int_{\C^n}f(z,t)\overline{g(z,t)}e^{-2t\psi(z)}d\mu(z)=0\] for all \(t\in A\) and hence
	\[I_0:=\int_{\R}\int_{\C^n}f(z,t)\overline{g(z,t)}e^{-2t\psi(z)}d\mu(z)dt=0.\]
	By positivity we have that
	\[I_1:=\int_{\R}\int_{\C^n}|f(z,t)|^2e^{-2t\psi(z)}d\mu(z)dt \, \text{ and } \, I_2:=\int_{\R}\int_{\C^n}|g(z,t)|^2e^{-2t\psi(z)}d\mu(z)dt\]
	exist in \([0,\infty]\).
	We then write
	\begin{align*}
	I_1+I_2&=I_1+I_0+\overline{I_0}+I_2=\int_{\R}\int_{\C^n}|f(z,t)e^{-t\psi(z)}+g(z,t)e^{-t\psi(z)}|^2d\mu(z)dt\\
	&=\int_{\R}\int_{\C^n}|u(z,t)|^2d\mu(z)dt=\int_{H_{n+1}}|u(z,t)|^2d\mu_{H_{n+1}}<\infty
	\end{align*}
	because \(u\in L^2(H_{n+1})\). Thus, we have \(I_1,I_2<\infty\).
	Setting
	\[v(z,t)=f(z,t)^{-t\psi(z)}=\int_{\C^n}P_{t\psi}(z,w)u(w,t)d\mu(w)\]
	for \(t\in A\) we have 
	\(\|v\|^2=I_1\leq \|u\|^2<\infty\) and hence \(v\in L^2(H_{n+1})\) with \(\|v\|\leq\|u\|\).
\end{proof}
\begin{lemma}\label{lem:projectionH}
	One has \(\tilde{P}(L^2(H_{n+1}))\subset \mathcal{H}^0_{b}(H_{n+1})\).
\end{lemma}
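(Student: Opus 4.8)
The plan is to verify the defining equations of $\mathcal H^0_b(H_{n+1})$ after a partial Fourier transform in the variable $x_{2n+1}$, as suggested by the opening sentence of this section. By Lemma~\ref{lem:equivkernel} with $q=0$, a function $h\in L^2(H_{n+1})$ lies in $\mathcal H^0_b(H_{n+1})$ if and only if $\bigl(\tfrac{\partial}{\partial\ol z_j}+i\lambda_jz_j\tfrac{\partial}{\partial x_{2n+1}}\bigr)h=0$ for every $j=1,\dots,n$ in the sense of distributions. Writing $\hat h(z,\eta)$ for the partial Fourier transform of $h$ with respect to $x_{2n+1}$ and using $\widehat{\partial_{x_{2n+1}}h}(z,\eta)=i\eta\,\hat h(z,\eta)$, this is equivalent to $\bigl(\tfrac{\partial}{\partial\ol z_j}-\lambda_jz_j\eta\bigr)\hat h(z,\eta)=0$ for a.e.\ $\eta\in\R$ and all $j$. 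Since $\tfrac{\partial}{\partial\ol z_j}\psi=\lambda_jz_j$ for $\psi(z)=\sum_{j}\lambda_j|z_j|^2$, this last condition says exactly that $e^{-\eta\psi(z)}\hat h(z,\eta)$ is holomorphic in $z$ for a.e.\ $\eta$. (The implication ``$h\in\mathcal H^0_b\Rightarrow$ holomorphicity'' is Lemma~\ref{fo1}; the converse, which is all we use, is the easy direction, relying only on the intertwining relation above and injectivity of the Fourier transform.)

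Now fix $u\in L^2(H_{n+1})$ and set $h:=\tilde P(u)$; then $h\in L^2(H_{n+1})$ by Lemma~\ref{lem:projectionL2}. By Definition~\ref{def:segobergman}, $h=\hat v$ where $v(z,t)=\int_{\C^n}P_{t\psi}(z,w)\check u(w,t)\,d\mu(w)$, so Fourier inversion in one variable gives $\hat h(z,\eta)=2\pi\,v(z,-\eta)$ for a.e.\ $\eta\in\R$. By \eqref{Eq:BKforT}, $P_{t\psi}\equiv0$ for $t\le0$, whence $v(z,t)=0$ for $t\le0$; and the orthogonal decomposition established in the proof of Lemma~\ref{2017-06-29a1}, applied to $\check u\in L^2(H_{n+1})$, shows that for a.e.\ $t>0$ one has $v(z,t)=f(z,t)e^{-t\psi(z)}$ with $z\mapsto f(z,t)$ holomorphic on $\C^n$. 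Consequently $e^{-\eta\psi(z)}\hat h(z,\eta)=0$ for $\eta\ge0$, while for a.e.\ $\eta<0$ (put $t=-\eta$) it equals $2\pi e^{t\psi(z)}v(z,t)=2\pi f(z,t)$, which is holomorphic in $z$. By the criterion of the first paragraph, $h=\tilde P(u)\in\mathcal H^0_b(H_{n+1})$; since $u$ was arbitrary, $\tilde P(L^2(H_{n+1}))\subset\mathcal H^0_b(H_{n+1})$.

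The computations with the explicit Gaussian kernel $P_{t\psi}$ are routine. The point that needs care is the translation between the distributional equation $\bigl(\tfrac{\partial}{\partial\ol z_j}+i\lambda_jz_j\tfrac{\partial}{\partial x_{2n+1}}\bigr)h=0$ on $H_{n+1}$ and its fibrewise form $\bigl(\tfrac{\partial}{\partial\ol z_j}-\lambda_jz_j\eta\bigr)\hat h(z,\eta)=0$ for a.e.\ $\eta$ --- essentially the ``for almost every $\eta$'' bookkeeping performed in the proof of Lemma~\ref{fo1} --- together with the identity $\hat h(z,\eta)=2\pi v(z,-\eta)$, which is just one-variable Fourier inversion but is literally meaningful only because $v\in L^2(H_{n+1})$ by Lemma~\ref{2017-06-29a1}. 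I expect this bookkeeping, and not any new estimate, to be the only genuine obstacle. (Alternatively, one could observe via the Remark following Definition~\ref{def:segobergman} that $\tilde P$ agrees with $\tilde S^{(0)}$ on $C_0^\infty(H_{n+1})$, hence on all of $L^2(H_{n+1})$ by density and $L^2$-boundedness, so that $\tilde P=S^{(0)}$ already by Theorem~\ref{main theorem 2}; I prefer the argument above because it keeps Lemma~\ref{lem:projectionH} within the Fourier-analytic framework of this section.)
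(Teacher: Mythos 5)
Your argument is correct and takes essentially the same route as the paper: both proofs rest on the decomposition from Lemma~\ref{2017-06-29a1} (so that $v(\cdot,t)=f(\cdot,t)e^{-t\psi}$ with $f(\cdot,t)$ holomorphic and $v(\cdot,t)=0$ for $t\le 0$) together with the characterization of $\mathcal{H}^0_b(H_{n+1})$ in Lemma~\ref{lem:equivkernel}. The only difference is technical: the paper verifies $\bigl(\tfrac{\partial}{\partial \ol z_j}+i\lambda_j z_j\tfrac{\partial}{\partial x_{2n+1}}\bigr)h_\varepsilon=0$ directly for the $\chi(\varepsilon t)$-regularized inverse Fourier integrals and passes to the limit in the sense of distributions, whereas you work entirely on the frequency side and invoke the (easy, Fubini-type) converse of the intertwining relation underlying Lemma~\ref{fo1}; both are sound.
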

\begin{proof}
	Given \(u\in L^2(H_{n+1})\) we have \(h:=\tilde{P}(u)\in L^2(H_{n+1})\) with \(h=\lim_{\varepsilon\to 0}h_{\varepsilon}\) in \(L^2\) norm, where \(\{h_\varepsilon\}_{\varepsilon>0}\subset L^2(H_{n+1})\),
	\[h_\varepsilon(z,x_{2n+1}):=\int_{\R}\chi(\varepsilon t)g(z,t)e^{-t\psi(z)}e^{-itx_{2n+1}}dt\]
	for a.e. \((z,x_{2n+1})\in H_{n+1}\), $\chi\in C^\infty_0(\Real)$, $\chi(x)=1$ if $\abs{x}\leq 1$, $\chi(x)=0$ if $\abs{x}>2$ and \(g(\cdot,t)\) is holomorphic for a.e.~\(t\in\R\).
	A straightforward calculation shows that
	\[\left(\frac{\partial}{\partial \overline{z}_j}+i\lambda_jz_j\frac{\partial}{\partial x_{2n+1}}\right)h_\varepsilon=0\, \, \text{for} \, 1\leq j\leq n, \, \varepsilon>0\]
	holds in the sense of distributions and
	thus we conclude \(\left(\frac{\partial}{\partial \overline{z}_j}+i\lambda_jz_j\frac{\partial}{\partial x_{2n+1}}\right)h=0\) for \(1\leq j\leq n\) in the sense of distributions which shows, by Lemma \ref{lem:equivkernel}, \(h\in\mathcal{H}^0_{b}(H_{n+1})\).
\end{proof}

\begin{theorem}\label{thm:szegobergman}
	Let \(S^{(0)}\) denote the Szeg\H{o} projection for the space \(\mathcal{H}^0_b(H_{n+1})\) with distribution kernel \(S^{(0)}(x,y)\) and let \(\tilde{P}\) be the operator defined in Definition \ref{def:segobergman}. One has \(\tilde{P}=S^{(0)}\) and hence formally
	\[S^{(0)}(x,y)=\frac{1}{2\pi}\int_{\R}e^{-it(x_{2n+1}-y_{2n+1})}P_{t\psi}(z,w)dt\]
	where \(P_{t\psi}(z,w)\) denotes the weighted Bergman kernel on \(\C^n\) with respect to the weight \(\psi(z)=\sum_{j=1}^n\lambda_j|z_j|^2\), \(\lambda_1,\ldots,\lambda_n>0\), using the notation \(x=(z,x_{2n+1})\) and \(y=(w,y_{2n+1})\).
\end{theorem}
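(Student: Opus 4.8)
The plan is to argue exactly as in the proof of Theorem~\ref{main theorem 2}. By Lemma~\ref{lem:projectionL2} and Lemma~\ref{lem:projectionH}, $\tilde{P}$ is a bounded operator on $L^2(H_{n+1})$ that maps into $\mathcal{H}^0_b(H_{n+1})$, so it suffices to establish two facts: (i) $\tilde{P}$ restricts to the identity on $\mathcal{H}^0_b(H_{n+1})$, and (ii) $\tilde{P}$ is self-adjoint on $L^2(H_{n+1})$. Granting these, for $u\in L^2(H_{n+1})$ and $f\in\mathcal{H}^0_b(H_{n+1})$ one gets $(\,(I-\tilde{P})u\,|\,f\,)=(\,u\,|\,f\,)-(\,\tilde{P}u\,|\,f\,)=(\,u\,|\,f\,)-(\,u\,|\,\tilde{P}f\,)=(\,u\,|\,f\,)-(\,u\,|\,f\,)=0$, so $(I-\tilde{P})u\perp\mathcal{H}^0_b(H_{n+1})$; together with $\tilde{P}u\in\mathcal{H}^0_b(H_{n+1})$ this identifies $\tilde{P}$ with the orthogonal projection $S^{(0)}$. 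The displayed formula for $S^{(0)}(x,y)$ is then nothing more than the distribution-kernel expression for $\tilde{P}$ already recorded in the remark preceding the theorem.

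For (i) I would use Lemma~\ref{fo1} and Lemma~\ref{lemma CR}. Fix $u\in\mathcal{H}^0_b(H_{n+1})$. By Lemma~\ref{lemma CR} the partial inverse Fourier transform $\check{u}(\cdot,t)$ vanishes for a.e.\ $t<0$, and since $P_{t\psi}\equiv 0$ for $t\leq 0$ by \eqref{Eq:BKforT}, the function $v$ of Lemma~\ref{2017-06-29a1} satisfies $v(\cdot,t)=0=\check{u}(\cdot,t)$ for a.e.\ $t\leq 0$. For a.e.\ $t>0$, Lemma~\ref{fo1} applied to $u$ (with $q=0$, so the only multi-index is the empty one), together with $\check{u}(z,t)=\tfrac{1}{2\pi}\hat{u}(z,-t)$, shows that $z\mapsto e^{t\psi(z)}\check{u}(z,t)$ is holomorphic on $\C^n$, while Fubini's theorem (as in the discussion after \eqref{neg1}) gives $\check{u}(\cdot,t)\in L^2(\C^n)$; hence $e^{t\psi(\cdot)}\check{u}(\cdot,t)\in H^0_{t\psi}(\C^n)$. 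The reproducing identity \eqref{eq:reproducingBK} for the weight $t\psi$ then yields $\int_{\C^n}P_{t\psi}(z,w)\check{u}(w,t)\,d\mu(w)=\check{u}(z,t)$, i.e.\ $v(z,t)=\check{u}(z,t)$. Thus $v=\check{u}$ a.e.\ on $H_{n+1}$, and therefore $\tilde{P}u=\hat{v}=u$.

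For (ii) I would run Definition~\ref{def:segobergman} through Plancherel's theorem in the variable $x_{2n+1}$, which reduces the self-adjointness of $\tilde{P}$ on $L^2(H_{n+1})$ to the self-adjointness, for a.e.\ $t\in\R$, of the fibre operator $h\mapsto\int_{\C^n}P_{t\psi}(z,w)h(w)\,d\mu(w)$ on $L^2(\C^n)$. The latter is clear because $P_{t\psi}$ is the kernel of a Bergman (orthogonal) projection, equivalently because $\overline{P_{t\psi}(w,z)}=P_{t\psi}(z,w)$; this Hermitian symmetry is immediate from \eqref{Eq:BKforT}, since the exponent there is the sum of a real expression symmetric in $(z,w)$ and a purely imaginary expression antisymmetric in $(z,w)$.

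The main obstacle is the measure-theoretic bookkeeping in step (i): one must secure a single conull set of $t$'s for which $\check{u}(\cdot,t)$ simultaneously lies in $L^2(\C^n)$, becomes holomorphic after multiplication by $e^{t\psi}$, and is an admissible input of the Bergman reproducing formula; and, similarly, the interchanges of integration underlying step (ii) need justification. Both are of the same flavour as the Fubini/Parseval arguments already carried out in Lemma~\ref{2017-06-29a1} and in the proof of Theorem~\ref{t-gue170603}. (A shorter alternative bypasses (i)--(ii) entirely: the remark before the theorem identifies the distribution kernel of $\tilde{P}$ with that of the operator $\widetilde{S}^{(0)}$ of \eqref{e-gue170528xr}, so $\tilde{P}$ and $\widetilde{S}^{(0)}$ agree on the dense subspace $C^\infty_0(H_{n+1})$ and hence, both being $L^2$-bounded, everywhere; then $\tilde{P}=\widetilde{S}^{(0)}=S^{(0)}$ by Theorem~\ref{main theorem 2}. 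I prefer the route via (i)--(ii) because, as the section's introduction emphasizes, the connection ``mainly depends on Lemma~\ref{fo1}''.)
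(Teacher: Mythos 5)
Your proof is correct and, in its core steps, is the paper's own argument: the identity $\tilde{P}u=u$ on $\mathcal{H}^0_b(H_{n+1})$ via Lemma~\ref{fo1} and the reproducing property \eqref{eq:reproducingBK} (your fibrewise treatment, including the use of Lemma~\ref{lemma CR} to kill the $t<0$ fibres where $P_{t\psi}\equiv 0$, fills in details the paper leaves implicit), combined with Lemmas~\ref{lem:projectionL2} and~\ref{lem:projectionH}. The only divergence is the final step: you propose to prove self-adjointness of $\tilde{P}$ directly via Plancherel and the Hermitian symmetry of $P_{t\psi}$, whereas the paper simply observes that Lemma~\ref{lem:projectionL2} already gives the sharp bound $\|\tilde{P}\|\leq 1$, and a bounded idempotent of norm at most one onto a closed subspace is automatically the orthogonal projection. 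Your self-adjointness argument is sound (the symmetry $\overline{P_{t\psi}(w,z)}=P_{t\psi}(z,w)$ is indeed immediate from \eqref{Eq:BKforT}), but it is work you can skip, since you cite Lemma~\ref{lem:projectionL2} anyway and it delivers the contraction property, not merely boundedness.
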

\begin{proof}
	Using Lemma \ref{fo1} and (\ref{eq:reproducingBK}) one finds \(\tilde{P}(u)=u\) for all \(u\in \mathcal{H}^0_{b}(H_{n+1})\). Moreover,  by Lemma \ref{lem:projectionH} we have that \(\tilde{P}\) satisfies \(\tilde{P}(L^2(H_{n+1}))\subset \mathcal{H}^0_{b}(H_{n+1})\). By Lemma \ref{lem:projectionL2}  we find \(\|\tilde{P}\|\leq 1\). Thus \(\tilde{P}\) is the orthogonal Projection on \(\mathcal{H}^0_{b}(H_{n+1})\) and hence we have \(\tilde{P}=S^{(0)}\).
\end{proof}
\begin{center}
{\bf Acknowledgement}
\end{center}

The authors would like  to thank the Institute for Mathematics, National University of Singapore for hospitality, a comfortable accommodation and financial support during their visits in May for the program  "Complex Geometry, Dynamical Systems and Foliation Theory". A main part of this work was done when the first and third author were visiting the Institute of Mathematics, Academia Sinica in January.
\bibliographystyle{plain}

\end{document}